\newcommand\addvmargin[1]{
\node[fit=(current bounding box),inner ysep=#1,inner xsep=0]{};
}
\newtheorem{thm}{\sc Theorem.}[section]
\newtheorem{lem}[thm]{\sc Lemma.}
\newtheorem{rem}[thm]{\sc Remark.}
\newtheorem{conjecture}[thm]{\sc Conjecture.}
\newenvironment{AMS}%
{{\upshape\bfseries AMS subject classifications. }\ignorespaces}{}
\newenvironment{keywords}{{\upshape\bfseries Key words. }\ignorespaces}{}
\newcommand{\bRplus}{{\mathbb R}_{>0}}
\newcommand{\bRgeq}{{\mathbb R}_{\geq 0}}
\newcommand{\RZ}{{\mathbb R} \slash {\mathbb Z}}
\newcommand{\RpisZ}{{\mathbb R} \slash (2\,\pi\,\mathfrak s\,{\mathbb Z})}
\newcommand{\RpiZ}{{\mathbb R} \slash (2\,\pi\,{\mathbb Z})}
\newcommand{\bR}{{\mathbb R}}
\newcommand{\bZ}{\mathbb{Z}}
\newcommand{\bS}{{\mathbb S}}
\newcommand{\bGamma}{{\Gamma}}
\newcommand{\bD}{{\mathbb D}}
\newcommand{\bH}{{\mathbb H}}
\newcommand{\distg}{\operatorname{dist}_g}
\newcommand{\spa}{\operatorname{span}}
\newcommand{\dH}[1]{\;{\rm d}{\mathcal{H}}^{#1}} 
\newcommand{\ds}{\;{\rm d}s}
\newcommand{\drho}{\;{\rm d}\rho}
\newcommand{\Vh}{\underline{V}^h}
\newcommand{\Vpartialzero}{\underline{V}_{\partial_0}}
\newcommand{\xspace}{\mathbb {X}}
\newcommand{\yspace}{\mathbb {Y}}
\newcommand{\Vhpartialzero}{\underline{V}^h_{\partial_0}}
\newcommand{\Id}{\rm Id}
\newcommand{\id}{\rm id}
\newcommand{\dd}[1]{\frac{\rm d}{{\rm d}#1}}
\newcommand{\ddt}{\dd{t}}
\newcommand{\ek}{e}
\newcommand{\ttau}{\Delta t}
\newcommand{\BGNmckappa}{\mathcal{A}}
\newcommand{\BGNmc}{\mathcal{C}}
\newcommand{\BGNpwf}{\mathcal{P}}
\newcommand{\BGNpwfwf}{\mathcal{Q}}
\def\epsilon{\varepsilon} 
\newcommand{\mat}[1]{\underline{\underline{#1}}\rule{0pt}{0pt}}
\def\hat{\widehat}
\def\arraystretch{1.15}
\begin{document}
\title{
Numerical approximation of boundary value problems for curvature flow
and elastic flow \\ in Riemannian manifolds}
\author{Harald Garcke\footnotemark[2]\ \and 
        Robert N\"urnberg\footnotemark[3]}

\renewcommand{\thefootnote}{\fnsymbol{footnote}}
\footnotetext[2]{Fakult{\"a}t f{\"u}r Mathematik, Universit{\"a}t Regensburg, 
93040 Regensburg, Germany}
\footnotetext[3]{Department of Mathematics, University of Trento, Trento,
Italy}

\date{}

\maketitle

\begin{abstract}
We present variational
approximations of boundary value problems for 
curvature flow (curve shortening flow)
and elastic flow (curve straightening flow)
in two-dimensional Riemannian manifolds that are conformally flat.
For the evolving open curves we propose natural boundary conditions that
respect the appropriate gradient flow structure. Based on suitable weak
formulations we introduce finite element approximations using piecewise
linear elements. For some of the schemes a stability result can be shown.
The derived schemes can be employed in very different contexts.
For example, we apply the schemes to the Angenent metric 
in order to numerically compute rotationally symmetric self-shrinkers 
for the mean curvature flow.
Furthermore, we utilise the schemes to compute geodesics that 
are relevant for optimal interface profiles in multi-component phase field 
models.
\end{abstract} 

\begin{keywords} 
parametric finite elements, Riemannian manifolds, 
curvature flow, curve shortening flow, 
elastic flow, curve straightening flow, geodesics, elastica,
Angenent metric, Angenent torus, self-shrinkers, multi-phase field interface 
profiles
\end{keywords}

\begin{AMS} 65M60, 53C44, 53A30, 35K55 \end{AMS}

\renewcommand{\thefootnote}{\arabic{footnote}}

\setcounter{equation}{0}
\section{Introduction} 

In this paper we consider numerical approximations for gradient flows of
curves evolving in Riemannian manifolds that are conformally flat. Here we
allow both closed and open curves, where in the latter case appropriate
boundary conditions need be considered in order to respect the required
gradient flow structure.

We define the Riemannian manifold with the help of its metric tensor as
follows.
On a domain $H\subset\mathbb{R}^2$ we let the metric tensor be given by
\begin{equation} \label{eq:g}
[(\vec v, \vec w)_g](\vec z) = g(\vec z)\,\vec v\,.\,\vec w \quad
\forall\ \vec v, \vec w \in \bR^2
\qquad \text{ for } \vec z \in H\,,
\end{equation}
where $\vec v\,.\,\vec w = \vec v^T\,\vec w$ is the standard Euclidean inner
product, and where $g:H \to \bRplus$ is a smooth positive weight 
function. 
This is the setting one obtains for a two-dimensional
Riemannian manifold that is conformally equivalent to the Euclidean
plane. Of course, if $g$ is constant we recover the case of a
Euclidean ambient space.
In local coordinates the metric is precisely given by
\eqref{eq:g}, see e.g.\ \cite{Jost05,Schippers07,Kuhnel15}. 
Examples of such
situations are the hyperbolic plane, the hyperbolic disc and the
elliptic plane. Other examples are given by two-dimensional
manifolds in $\mathbb{R}^d$, $d\ge 3$, that can be conformally
parameterised, such as spheres without pole(s), catenoids and tori.
Coordinates $(x_1,x_2)\in H$ together with a metric
$g$ as in \eqref{eq:g} are called isothermal coordinates, i.e.\ in all
situations considered in this paper we assume that we have isothermal
coordinates. We refer to Section~\ref{sec:old1} and 
\cite[3.29 in Section~3D]{Kuhnel15} for more information. 

The metric tensor \eqref{eq:g} induces a notion of length in $H$.
In particular, the length of a vector $\vec v \in \bR^2$ 
at the location $\vec z \in H$ is defined by
\begin{equation} \label{eq:normg}
[|\vec v|_g](\vec z) = \left([(\vec v, \vec v)_g](\vec z) \right)^\frac12
= g^\frac12(\vec z)\,|\vec v|\,,
\end{equation}
whereas the length of a curve $\vec\gamma \in C^1([0,1], H)$ is given by
\begin{equation} \label{eq:Lgamma}
L_g(\vec\gamma) = \int_0^1 [|\vec\gamma'(\rho)|_g](\vec\gamma(\rho)) \drho
= \int_0^1 g^\frac12(\vec\gamma(\rho))\,|\vec\gamma'(\rho)|\drho\,.
\end{equation}
We note that $L_g$, which is also called geodesic length,
naturally induces a distance function in $H$, with the
distance between two points $\vec z_0, \vec z_1 \in H$ defined as
\begin{equation*} 
\distg(\vec z_0, \vec z_1) = \inf \left\{
L_g(\vec\gamma) : 
\vec\gamma \in C^1([0,1], H)\,,\ \vec\gamma(0) = \vec z_0\,,\ \vec\gamma(1)
= \vec z_1 \right\}.
\end{equation*}
It can be shown that $(H,\distg)$ is a metric space, see
\cite[Section~1.4]{Jost05}.

We will present the mathematical details of curvature flow and elastic flow
in the next section, together with the derivation of suitable boundary
conditions. For now we mention that curvature flow,
for a family of curves $(\Gamma(t))_{t\in[0,T]}$, can be defined as the
$L^2$--gradient flow of geodesic length, 
$L_g(\Gamma) = \int_\Gamma g^\frac12 \ds$,
with respect to the $L^2$--inner product $\langle v,w\rangle = 
\int_\Gamma v\,w\,g^\frac12 \ds$. 
It is often called curve shortening flow.
In particular, on letting the geodesic
curvature $\varkappa_g$ be the first variation of $L_g(\Gamma)$,
with respect to $\langle \cdot,\cdot\rangle$, then curvature flow is given by
\[
\mathcal{V}_g = \varkappa_g\,,
\]
where $\mathcal{V}_g = g^{\frac12}\,\mathcal{V}$, and
$\mathcal V$ is the Euclidean normal velocity of $\Gamma$ in $\bR^2$.
Moreover, the geodesic elastic energy of $\Gamma$ is defined by 
$W_g(\Gamma) = \tfrac12\,\int_\Gamma (\varkappa_g)^2\,g^\frac12 \ds$,
and elastic flow arises as the $L^2$--gradient flow of $W_g(\Gamma)$.
The elastic energy is often called bending energy of $\Gamma$, and elastic 
flow is also known by the name curve straightening flow. 
Critical points of $W_g(\Gamma)$ are called elastica, and a lot of interest 
in elastic flow is driven by the fact that stationary solutions to the flow
are elastica. Moreover, elastic flow can be a viable strategy to obtain
unstable geodesics, i.e.\ critical points of the length $L_g(\Gamma)$
that are unstable under curvature flow. In fact, all geodesics satisfy
$\kappa_g=0$, and so they represent global minimisers for the bending energy.

There is a tremendous amount of work in the literature on curvature flow 
and elastic flow in the Euclidean plane, 
both from an analytical and a numerical point of view.
Curvature flow in more complex ambient spaces has been treated
analytically in e.g.\ \cite{EpsteinC87,Cabezas-RivasM07,AndrewsC17},
while numerical approximations have been considered in 
\cite{ChengBMO02,MikulaS06,SpiraK07,MaitreS08,curves3d,hypbol}, 
for the case of closed curves,
and in \cite{BenninghoffG16} for the case of open curves.
Using the flow along the negative gradient of the total squared geodesic
curvature functional to obtain geodesics as long-time limits has been first
suggested in a seminal paper by Langer and Singer, \cite{LangerS84}. Later the
same authors used Morse theory to investigate
stable critical points of the functional, \cite{LangerS87}. 
The curve straightening flow has been used in \cite{Linner98a} to compute
periodic geodesics, and in \cite{Koiso15} a variant of the flow was used to
study the evolution of an elastic wire in a Riemannian manifold.
Short-time existence, and in certain cases long-time existence, for
elastic flow for closed curves in Riemannian manifolds has been studied
analytically in \cite{DallAcquaLLPS18}, for the case that the manifold is 
a sphere, and in \cite{DallAcquaS17preprint,DallAcquaS18,MullerS20}
for the hyperbolic plane. 
We are not aware of existing work on boundary value problems for elastic 
curves in Riemannian manifolds, but note that the case of 
a Euclidean ambient space has been considered in e.g.\
\cite{Helmers13,DallAcquaLP14,DallAcquaP14,Helmers15,DallAcquaLP17,%
DallAcquaLP19,GarckeMP19,GarckeMP20}.
As far as the numerical approximation of elastic flow is concerned,
we remark that the case of a Euclidean ambient space have been treated 
in \cite{DeckelnickD09,pwf,Bartels13a}. In \cite{DeckelnickD09}
error estimates are shown, while \cite{Bartels13a} contains a partial 
convergence result under a regularity assumption on the velocity.
The case of 
a Riemannian manifold has been studied in \cite{curves3d,hypbol,hypbolpwf}.
All these approaches use finite element discretisations and are of
variational structure.

The present authors, together with John W.\ Barrett, have considered the
evolution of closed curves in Riemannian manifolds that are conformally
equivalent to the Euclidean plane in the recent works \cite{hypbol,hypbolpwf}.
Building on these works, in
this paper we derive boundary value problems for curvature flow and
elastic flow in such manifolds, and we believe that for elastic flow 
the obtained formulations are new in the literature.
Using appropriate variations, different boundary value problems are derived in
Section~\ref{subsec:cf} for curvature flow, see \eqref{eq:cfall}, and 
in Section~\ref{subsec:ef} for elastic flow, see
\eqref{eq:efall}, \eqref{eq:SF1} and \eqref{eq:SF2}.
In the case of elastic flow, the obtained conditions 
generalise classical Navier conditions as well as so called
clamped conditions and semi-free type conditions. 
We will also introduce variational formulations, which lead to natural
spatially discrete and fully discrete approximations for the highly
nonlinear problems. In particular, the variational treatment allows 
for a natural discretisation of boundary conditions,
which in the case of elastic flow are highly non-trivial. 
We introduce finite element schemes with
good mesh properties as well as schemes which allow for a stability
result. We also present several numerical results, which include
computations that are the first for boundary value problems for 
elastic flow in Riemannian manifolds.  
We believe that the presented numerical simulations make a strong
case for the usefulness of curvature flow and elastic flow in computing
geodesics in Riemannian manifolds, both in the case of closed curves,
and in the case of curves with boundary conditions.

We end this introduction with the presentation of some example metrics
for \eqref{eq:g}. To this end, we define the half-plane 
\[
\bH^2 = \{ \vec z \in \bR^2 : \vec z \,.\,\vec\ek_1 > 0 \}
\]
with closure $\overline{\bH^2} = 
\{ \vec z \in \bR^2 : \vec z \,.\,\vec\ek_1 \geq 0 \}$.
Metrics that the authors have considered in their recent works on closed
curves, see \cite{hypbol,hypbolpwf}, are
\begin{subequations} \label{eq:gs}
\begin{align} 
g(\vec z) & = (\vec z \,.\,\vec\ek_1)^{-2\,\mu}\,,\ \mu \in \bR\,,
\quad \text{and} \quad H = 
\begin{cases} \bH^2 & \mu \not=0\,,\\
\bR^2 & \mu = 0\,,\\
\end{cases} \label{eq:gmu}\\
g(\vec z) & = \frac4{(1 - \alpha\, |\vec z|^2)^2}\,,\ \alpha \in \bR\,,
\quad \text{and} \quad H = \begin{cases}
\bD_{\alpha} = \{ \vec z \in \bR^2 : |\vec z| < \alpha^{-\frac12} \}
& \alpha > 0\,, \\
\bR^2 & \alpha \leq 0\,,
\end{cases}\label{eq:galpha}\\
g(\vec z) & = \cosh^{-2}(\vec z\,.\,\vec\ek_1) \quad\text{and}\quad
H = \bR^2\,,\label{eq:gMercator}\\
g(\vec z) & = \cosh^2(\vec z\,.\,\vec\ek_1) \quad\text{and}\quad
H = \bR^2\,, \label{eq:gcatenoid}\\
g(\vec z) & = \mathfrak s^2\,
([\mathfrak s^2 + 1]^\frac12 - \cos (\vec z\,.\,\vec\ek_2))^{-2}\,,\ 
\mathfrak s \in \bRplus\,,
\quad\text{and}\quad H = \bR^2\,. \label{eq:gtorus}
\end{align}
\end{subequations}
Recall that \eqref{eq:gmu} with $\mu=1$ models the hyperbolic plane,
while $\mu=0$ corresponds to the Euclidean plane.
The metric \eqref{eq:galpha} for $\alpha=1$ models the hyperbolic disk,
while $\alpha=-1$ yields the elliptic plane. Moreover,
\eqref{eq:gMercator}, \eqref{eq:gcatenoid} and \eqref{eq:gtorus} arise
from conformal parameterisations of spheres, catenoids and tori,
respectively, where in the latter case the torus has
large radius $[1 + \mathfrak s^2]^\frac12$ and small radius 1.

Additional metrics that we consider in this paper are
\begin{subequations} \label{eq:gnew}
\begin{align} 
  g(\vec z) & = (\vec z\,.\, \vec\ek_1)^{2\,(n-1)} 
\,e^{-\frac12\, |\vec z|^2} \,,\ n \in \bZ_{\geq 2}\,,
\quad\text{and}\quad H = \bH^2\,, \label{eq:gAngenent} \\
g(\vec z)&= \frac{\mathfrak{b}^2}{1 - \mathfrak{b}^2}\,e^{2\,\mathfrak{b}\,\vec
z\,.\,\vec\ek_1}\,,\ \mathfrak{b} \in (0,1)\,,
\quad\text{and}\quad H = \bR^2\,, \label{eq:gcone} \\
g(\vec z)&= \Psi({\bf u}_0 + U \,\vec z)\,,\ {\bf u}_0 \in \bR^3\,,\
U \in \bR^{3\times 2}\,,\ \Psi \in C^\infty(\bR^3)\,,
\quad\text{and}\quad H = \bR^2\,. \label{eq:gGNS}
\end{align}
\end{subequations}
The metric \eqref{eq:gAngenent} is also called the Angenent metric,
see \cite[(5)]{Angenent92}, with a small mistake in the exponent,
and \cite[(1.3)]{KleeneM14}, 
and is of interest in differential geometry. 
Here we mention the fact that for a rotationally symmetric hypersurface 
$\mathcal{S} \subset \bR^{n+1}$, with generating curve $\Gamma \subset \bH^2$,
the geodesic length of $\Gamma$, with respect to the metric
\eqref{eq:gAngenent}, collapses, up to a constant factor, 
to Huisken's $F$-functional
\begin{equation} \label{eq:HuiskenF}
F(\mathcal{S}) 
= (4\,\pi)^{-\frac n2}\,\int_{\mathcal{S}} e^{-\frac14\,|\vec\id|^2}\dH{n}\,,
\end{equation}
see \cite{Huisken90,ColdingM12,Berchenko-Kogan19}. 
It can be shown, \cite{Huisken90}, that
critical points of \eqref{eq:HuiskenF} are self-shrinkers for
mean curvature flow, and so geodesics for the metric \eqref{eq:gAngenent} 
generate axisymmetric self-shrinkers, such
as the Angenent torus, see \cite{Angenent92}.
 
The metric \eqref{eq:gcone}, on the other hand, 
arises from a conformal parameterisation of a right circular cone without the 
apex as follows. Let 
\begin{equation} \label{eq:coneM}
\mathcal{M} = 
\{ (\beta\,r\,\cos\,\theta, \beta\,r\,\sin\,\theta, r)^T : r \in \bRplus\,,
\theta \in [0,2\,\pi)\}\,, \quad \beta \in \bRplus\,.
\end{equation}
We recall from \cite[\S2.4]{hypbol} that  
$\vec\Phi : H \to \mathcal{M}$ is a conformal parameterisation of 
$\mathcal{M}$, if $\mathcal{M} = \vec\Phi(H)$, 
$|\partial_{\vec\ek_1} \vec\Phi(\vec z)|^2 = |\partial_{\vec\ek_2} 
\vec\Phi(\vec z)|^2$ and
$\partial_{\vec\ek_1} \vec\Phi(\vec z) \,.\, 
\partial_{\vec\ek_2} \vec\Phi(\vec z) = 0$ for all $\vec z \in H$.
Using the ansatz
\[
\vec\Phi(\vec z) = r(\vec z\,.\,\vec\ek_1)\,
(\beta\,\cos(\vec z\,.\,\vec\ek_2), \beta\,\sin(\vec z\,.\,\vec\ek_2), 1)^T \,,
\]
for some function $r\in C^\infty(\bR,\bRplus)$,
it is easy to see that $\partial_{\vec\ek_1}\vec\Phi\,.\,
\partial_{\vec\ek_2}\vec\Phi = 0$, as well as
\begin{equation*} 
|\partial_{\vec\ek_1} \vec\Phi|^2 = (1  + \beta^2)\,(r')^2 \quad\text{and}\quad
|\partial_{\vec\ek_2} \vec\Phi|^2 = \beta^2\,r^2 \,,
\end{equation*}
which shows that $r(u) = e^{\mathfrak{b}\,u}$, where
$\mathfrak{b} = [\frac{\beta^2}{1+\beta^2}]^{\frac12}$,
leads to a conformal parameterisation of $\mathcal M$.
In particular, setting $g = |\partial_{\vec\ek_1} \vec\Phi|^2 = 
|\partial_{\vec\ek_2} \vec\Phi|^2$ leads to \eqref{eq:gcone}. 
 
Finally, metrics of the type \eqref{eq:gGNS}, together with the choices
\begin{equation} \label{eq:U}
{\bf u}_0 = (1,0,0)^T\quad\text{and}\quad
U = \begin{pmatrix}
2^{-\frac12} & 6^{-\frac12} \\
-2^{-\frac12} & 6^{-\frac12} \\
0 & -(\frac23)^{\frac12} 
\end{pmatrix}\,,
\end{equation}
play a role in determining optimal interface profiles in multi-component 
Ginzburg--Landau phase field models, see e.g.\ \cite{GarckeNS00}. For example,
the choice 
\begin{equation} \label{eq:Psi}
\Psi(u_1, u_2, u_3) = 
\sigma_{12}\,u_1^2\,u_2^2
+ \sigma_{13}\,u_1^2\,u_3^2 
+ \sigma_{23}\,u_2^2\,u_3^2 
+ \sigma_{123}\,u_1^2\,u_2^2\,u_3^2 \,,
\end{equation}
where $\sigma_{12}\,,\sigma_{13}\,,\sigma_{23}\in \bRplus$ and
$\sigma_{123} \in \bRgeq$,
corresponds to \cite[(24), (25)]{GarckeNS00}, where 
${\bf u} = (u_1,u_2,u_3)^T$
represents a three-phase order parameter and $u_i$ stands for the fraction
of phase $i$. We recall that physically meaningful values for the order
parameter have to lie within the Gibbs simplex
\begin{equation} \label{eq:G}
G = \{ (u_1,u_2,u_3)^T \in \bR^3 : u_1+u_2+u_3 = 1\,, u_1,u_2,u_3 \geq 0\}\,.
\end{equation}
In order to rigorously relate phase field parameters to their
sharp interface limits, it is necessary to establish if the only geodesics 
connecting the three pure phases, ${\bf e}_1= (1,0,0)^T$, ${\bf e}_2=(0,1,0)^T$ 
and ${\bf e}_3=(0,0,1)^T$, are given by straight line segments.
Of course, generalisations 
to other types of potentials are also possible. We refer to
\cite{GarckeNS00,GarckeH08,BretinM17} for more details.

The remainder of this paper is organised as follows.
In Section~\ref{sec:old1} we present strong and weak formulations
of curvature flow and elastic flow. The semidiscrete continuous-in-time
finite element approximations introduced in Section~\ref{sec:sd} will be based
on these weak formulations. Stability of some of the schemes is also shown in  
Section~\ref{sec:sd}. Fully discrete approximations are presented in
Section~\ref{sec:fd}, together with results on their well-posedness and
stability, where applicable. Finally, in Section~\ref{sec:nr} we present
several numerical simulations for the derived schemes and the various metrics
considered in this paper.

\setcounter{equation}{0}
\section{Mathematical formulations} \label{sec:old1}

We let $\RZ$ be the periodic interval $[0,1]$, and set
\[
I = \RZ\,, \text{ with } \partial I = \emptyset\,,\quad \text{or}\quad
I = (0,1)\,, \text{ with } \partial I = \{0,1\}\,.
\]
Consider a family of curves $(\Gamma(t))_{t\in [0,T]}$, $T > 0$, that can
be either open, $I=(0,1)$, or closed, $I=\RZ$.
Given some smooth parameterisation 
$\vec x: I\times[0,T] \ni (\rho,t) \mapsto \vec x(\rho,t)\in \bR^2$,
with $|\vec x_\rho| > 0$ in $\overline I \times [0,T]$,
we introduce the arclength $s$ of the curve, i.e.\ $\partial_s =
|\vec{x}_\rho|^{-1}\,\partial_\rho$, and set
\begin{equation} \label{eq:tau}
\vec\tau = \vec x_s = 
\frac{\vec x_\rho}{|\vec x_\rho|} \quad \mbox{and}
\quad \vec\nu = -\vec\tau^\perp \qquad \text{in } I\,,
\end{equation}
where $\cdot^\perp$ denotes a clockwise rotation by $\frac{\pi}{2}$.
We let $\mathcal V = \vec x_t\,.\,\vec\nu$ denote the normal velocity, and let
the Euclidean curvature $\varkappa$ be defined by
\begin{equation} \label{eq:varkappa}
\varkappa\,\vec\nu 
= \vec x_{ss} = 
\frac1{|\vec x_\rho|} \left[ \frac{\vec x_\rho}{|\vec x_\rho|} \right]_\rho
\qquad \text{in } I\,,
\end{equation}
see \cite{DeckelnickDE05}. We also let
\begin{equation} \label{eq:sg}
\partial_{s_g} = |\partial_\rho\,\vec x|_g^{-1} \,\partial_\rho =
g^{-\frac12}(\vec x)\,|\vec x_\rho|^{-1} \,\partial_\rho
= g^{-\frac12}(\vec x)\,\partial_s \qquad \text{in } I\,.
\end{equation}
We introduce 
\begin{equation} \label{eq:nug}
\vec\nu_g = g^{-\frac12}(\vec x) \,\vec\nu
= - g^{-\frac12}(\vec x) \,\vec x_s^\perp = - \vec x_{s_g}^\perp
\quad \text{and} \quad
\vec\tau_g = \vec x_{s_g} \qquad \text{in } I\,,
\end{equation}
so that $\vec\tau_g\,.\,\vec\nu_g =  0$ and
$|\vec\tau_g|_g^2 = |\vec\nu_g|_g^2 = (\vec\nu_g, \vec\nu_g)_g = g(\vec
x)\,\vec\nu_g\,.\,\vec\nu_g=1$, and let
\begin{equation} \label{eq:Vg}
\mathcal{V}_g = (\vec x_t, \vec\nu_g)_g = g^\frac12(\vec x)\,\vec
x_t\,.\,\vec\nu = g^\frac12(\vec x)\,\mathcal{V} \qquad \text{in } I\,.
\end{equation}
At this stage we would like to draw the reader's
attention to the different usages of subscripts in this paper. The subscripts
$\cdot_g$ above, and throughout the paper, denote quantities
associated with the metric $g$. The subscripts $\cdot_t$ and $\cdot_\rho$, 
on the other hand, denote partial derivatives with respect to $t$ and $\rho$,
respectively. Finally, $\cdot_s$ and $\cdot_{s_g}$ denote weighted 
partial derivatives, and are defined in \eqref{eq:tau} 
and in \eqref{eq:sg}, respectively.

The geodesic curvature can be defined as
\begin{equation} \label{eq:varkappag}
\varkappa_g 
= g^{-\frac12}(\vec x)\left[\varkappa - 
\tfrac12\,\vec\nu\,.\,\nabla\,\ln g(\vec x) \right]
= g^{-\frac12}(\vec x) \left[\varkappa - 
\vec\nu_g\,.\,\nabla\,g^\frac12(\vec x) \right] 
 \qquad \text{in } I\,,
\end{equation}
see \cite{hypbol}.
We note that $\mathcal{V}_g$ and $\varkappa_g$,
similarly to $\mathcal{V}$ and $\varkappa$, only depend on $\Gamma$,
but not on the chosen parameterisation $\vec x$.
For the case of an evolving closed curve, $\partial I = \emptyset$, we
recall from \cite{hypbol} that curvature flow,
\begin{equation} \label{eq:Vgkg}
\mathcal{V}_g = \varkappa_g \qquad \text{in } I\,,
\end{equation}
is the $L^2$--gradient flow of the geodesic length,
\begin{equation} \label{eq:Lg}
L_g(\vec x) = \int_I [|\vec x_\rho|_g](\vec x) \drho
= \int_I g^\frac12(\vec x)\,|\vec x_\rho|\drho\,,
\end{equation}
recall \eqref{eq:Lgamma}. 
In particular, for closed curves evolving by \eqref{eq:Vgkg} it holds that
\begin{equation} \label{eq:gL2gradflow}
\ddt\, L_g(\vec x(t)) + \int_I (\mathcal V_g)^2 \,|\vec x_\rho|_g \drho = 0\,.
\end{equation}

Elastic flow, on the other hand, is the $L^2$--gradient flow of the elastic 
energy
\begin{equation} \label{eq:Wg}
W_g(\vec x) = \tfrac12\,\int_I \varkappa_g^2 \,|\vec x_\rho|_g \drho \,.
\end{equation}
It was shown in \cite{LangerS84}, see also \cite{hypbol},
that for closed curves this flow is given by 
\begin{equation} \label{eq:g_elastflow}
\mathcal{V}_g = - (\varkappa_g)_{s_gs_g} - \tfrac12\,\varkappa_g^3 
- S_0(\vec x)\,\varkappa_g \qquad \text{in } I\,,
\end{equation}
where the Gaussian curvature $S_0$ is defined by
\begin{equation*} 
S_0(\vec z) = - \frac{\Delta\,\ln g(\vec z)}{2\,g(\vec z)}
\qquad \vec z \in H\,,
\end{equation*}
see, e.g., \cite[Definition~2.4]{KrausR13}.
In particular, closed curves evolving by \eqref{eq:g_elastflow} satisfy
\begin{equation} \label{eq:eL2gradflow}
\ddt\,W_g(\vec x(t)) + \int_I (\mathcal{V}_g)^2 \,|\vec x_\rho|_g \drho = 0
\,.
\end{equation}
We state the value of the Gaussian curvature $S_0$ for the metrics
\eqref{eq:gs} and \eqref{eq:gnew} in Table~\ref{tab:S0}. Here we note
that for the Euclidean case, \eqref{eq:gmu} with $\mu=0$, the geodesic
elastic flow \eqref{eq:g_elastflow} collapses to classical elastic flow
$\mathcal V = -\varkappa_{ss} - \tfrac12\,\varkappa^3$.
\begin{table}
\center
\def\arraystretch{1.25}
\begin{tabular}{|c|c|}
\hline
$g$ & $S_0(\vec x)$ \\ \hline
(\ref{eq:gmu}) & 
 $-\mu\,(\vec x\,.\,\vec\ek_1)^{2\,(\mu-1)}$ 
\\
(\ref{eq:galpha}) & $-\alpha$ \\ 
(\ref{eq:gMercator}) & $1$ \\
(\ref{eq:gcatenoid}) & $- \cosh^{-4}(\vec x\,.\,\vec\ek_1)$ \\
(\ref{eq:gtorus}) & 
$\frac{[\mathfrak s^2 + 1]^\frac12 \cos(\vec x\,.\,\vec\ek_2)  - 1}
{\mathfrak s^2}$ \\
(\ref{eq:gAngenent}) & 
$(\vec x\,.\,\vec\ek_1)^{-2\,n}\left[ n - 1
+ (\vec x\,.\,\vec\ek_1)^{2} \right] e^{\frac12\, |\vec x|^2}$ \\ 
\eqref{eq:gcone} & 
0 \\ \hline
\end{tabular}
\caption{The Gaussian curvature $S_0 = - \frac{\Delta\,\ln g}{2\,g}$ 
for the metrics in \eqref{eq:gs} and \eqref{eq:gAngenent}, \eqref{eq:gcone}.}
\label{tab:S0}
\end{table}%

In the remainder of this section, we would like to derive suitable boundary
conditions for curvature flow and elastic flow that respect the gradient flow
structures \eqref{eq:gL2gradflow} and \eqref{eq:eL2gradflow}, and then
introduce weak formulations for the obtained boundary value problems. 
In general, in the case of an open curve, $I=(0,1)$, we would like to 
consider the following types of boundary conditions on $\partial I$:
\begin{equation} \label{eq:newbc}
{\rm(i)}\ \vec x_t = \vec 0\,,\quad
{\rm(ii)}\ \vec x_t \,.\,\vec\ek_1 = 0\,,\quad
{\rm(iii)}\ \vec x_t \,.\,\vec\ek_2 = 0\,.
\end{equation}
Clearly, \eqref{eq:newbc}(i) means that we consider the endpoint fixed in 
time, while in \eqref{eq:newbc}(ii) and \eqref{eq:newbc}(iii) we allow the
boundary point to move freely parallel to the $x_2$-- and $x_1$--axis,
respectively.
For some of the metrics in \eqref{eq:gs} and \eqref{eq:gnew} it is possible to
$C^1$--continuously extend the metric $g$ to $\overline{\bH^2}$ such that 
$g=0$ on the $x_2$--axis. In fact, this holds precisely
for \eqref{eq:gmu} with $\mu \leq -1$ and for \eqref{eq:gAngenent}. 
Having boundary points move freely on the $x_2$--axis in that
case is of particular interest, most notably when the evolving curve plays the
role of the generating curve for an axisymmetric surface.
Altogether, and for later use, we consider the disjoint partition
$\partial I = 
\partial_0 I \cup \partial_{1} I \cup \partial_{2} I\cup \partial_C I \cup
\partial_D I \cup \partial_N I$ with the conditions
\begin{subequations} \label{eq:xp}
\begin{align} 
\vec x_t \,.\,\vec\ek_1 = 0 \quad &
\text{on } \partial_0 I \times (0,T]\,,\label{eq:axibc} \\
\vec x_t = \vec 0 \quad &
\text{on } (\partial_D I \cup \partial_C I \cup \partial_N I) \times (0,T]\,, 
\label{eq:noslipbc} \\
\vec x_t \,.\,\vec\ek_i = 0 \quad &
\text{on } \partial_i I \times (0,T]\,, \ i =1,2\,.  \label{eq:freeslipbc} 
\end{align}
\end{subequations}
In the above $\partial_0 I$ denotes the subset of boundary points
of $I$ that correspond to endpoints of $\Gamma(t)$ where $g$ is set to vanish,
and only in that case does it make sense to consider
\eqref{eq:axibc} separately from \eqref{eq:freeslipbc}.
I.e.\ from now on we will assume
that $g(\vec x(0)) = 0$ on $\partial_0 I$, so that \eqref{eq:axibc} implies
$g(\vec x(t)) = 0$ on $\partial_0 I$ for all $t$.
In our paper, we will consider \eqref{eq:axibc} only for
\eqref{eq:gmu}, with $\mu\leq-1$, and \eqref{eq:gAngenent}. 
The subscripts $D,C,N$ relate to Dirichlet, clamped and Navier
boundary conditions, respectively, with the former relevant for curvature flow,
and the latter two having applications for elastic flow.
In Table~\ref{tab:diagram} we visualise the different types of 
boundary nodes that we consider in this paper.
\begin{table}
\center
\begin{tabular}{ccccc}
\hline
$\partial I$ & $\partial_0 I$ & 
$\partial_D I \cup \partial_C I \cup \partial_N I$ &
$\partial_1 I$ & $\partial_2 I$ \\ \hline
$\partial \Gamma$ &
\begin{tikzpicture}[scale=0.5,baseline=40]
\begin{axis}[axis equal,axis line style=thick,axis lines=center, 
xtick style ={draw=none}, ytick style ={draw=none}, xticklabels = {}, 
yticklabels = {}, xmin=-0.1, xmax = 2, ymin = -2, ymax = 2]
\addplot[mark=*,color=blue,mark size=6pt] coordinates {(0,1)};
\draw[<->,line width=3pt,color=red] (axis cs:0.3,0.5) -- (axis cs:0.3,1.5);
\node at (axis cs:0.5,-0.3){\Large$\vec\ek_1$};
\node at (axis cs:-0.3,0.5){\Large$\vec\ek_2$};
\end{axis}
\addvmargin{1mm}
\end{tikzpicture} 
&
\begin{tikzpicture}[scale=0.5,baseline=40]
\begin{axis}[axis equal,axis line style=thick,axis lines=center, 
xtick style ={draw=none}, ytick style ={draw=none}, xticklabels = {}, 
yticklabels = {}, xmin=-0.1, xmax = 2, ymin = -2, ymax = 2]
\addplot[mark=*,color=blue,mark size=6pt] coordinates {(2,1)};
\node at (axis cs:0.5,-0.3){\Large$\vec\ek_1$};
\node at (axis cs:-0.3,0.5){\Large$\vec\ek_2$};
\end{axis}
\addvmargin{1mm}
\end{tikzpicture} 
&
\begin{tikzpicture}[scale=0.5,baseline=40]
\begin{axis}[axis equal,axis line style=thick,axis lines=center, 
xtick style ={draw=none}, ytick style ={draw=none}, xticklabels = {}, 
yticklabels = {}, xmin=-0.1, xmax = 2, ymin = -2, ymax = 2]
\addplot[mark=*,color=blue,mark size=6pt] coordinates {(2,1)};
\draw[<->,line width=3pt,color=red] (axis cs:2.3,0.5) -- (axis cs:2.3,1.5);
\draw[thick,color=blue] (axis cs:2,-2) -- (axis cs:2,2);
\node at (axis cs:0.5,-0.3){\Large$\vec\ek_1$};
\node at (axis cs:-0.3,0.5){\Large$\vec\ek_2$};
\end{axis}
\addvmargin{1mm}
\end{tikzpicture} 
&
\begin{tikzpicture}[scale=0.5,baseline=40]
\begin{axis}[axis equal,axis line style=thick,axis lines=center, 
xtick style ={draw=none}, ytick style ={draw=none}, xticklabels = {}, 
yticklabels = {}, xmin=-0.1, xmax = 2, ymin = -2, ymax = 2]
\addplot[mark=*,color=blue,mark size=6pt] coordinates {(2,1)};
\draw[<->,line width=3pt,color=red] (axis cs:1.5,0.7) -- (axis cs:2.5,0.7);
\draw[thick,color=blue] (axis cs:-2,1) -- (axis cs:4,1);
\node at (axis cs:0.5,-0.3){\Large$\vec\ek_1$};
\node at (axis cs:-0.3,0.5){\Large$\vec\ek_2$};
\end{axis}
\addvmargin{1mm}
\end{tikzpicture} 
\\ \hline
\end{tabular}
\caption{The different types of boundary nodes enforced by 
(\ref{eq:axibc})--(\ref{eq:freeslipbc}), and their effect on the possible
movement of the boundary points.
}
\label{tab:diagram}
\end{table}%

For some of the weak formulations, it will be useful to have an analogue of
\eqref{eq:varkappa} for the geodesic curvature $\varkappa_g$,
recall \eqref{eq:varkappag}. To this end, we note that it
can be easily shown from \eqref{eq:varkappa} that
\begin{equation} \label{eq:ng}
\nabla\,g^\frac12(\vec x)
= \vec\nu\,(\vec\nu\,.\,\nabla)\,g^\frac12(\vec x)
+ \frac1{|\vec x_\rho|}\left[
g^\frac12(\vec x)\,\frac{\vec x_\rho}{|\vec x_\rho|}\right]_\rho
-g^\frac12(\vec x)\,\varkappa\,\vec\nu
\qquad \text{in } I\,,
\end{equation}
see \cite[(2.16)]{hypbol}.
Combining (\ref{eq:varkappag}) and (\ref{eq:ng}) yields that
\begin{equation} \label{eq:gkgnu}
g(\vec x)\,\varkappa_g\,\vec\nu = 
\frac1{|\vec x_\rho|} \left[ g^\frac12(\vec x)\,
\frac{\vec x_\rho}{|\vec x_\rho|}\right]_\rho - \nabla\,g^\frac12(\vec x)
\qquad \text{in } I\,.
\end{equation}

Let $(\cdot,\cdot)$ denote the $L^2$--inner product on $I$, and let 
\begin{subequations} \label{eq:spaces}
\begin{align} \label{eq:Vpartialzero}
\Vpartialzero &= \{ \vec\eta \in [H^1(I)]^2 : \vec\eta(\rho)\,.\,\vec\ek_1 = 0
\quad \forall\ \rho \in \partial_0 I\}\,, \\
\xspace & = \left\{ \vec\eta \in \Vpartialzero : 
\vec\eta(\rho) = \vec 0\quad \forall\ \rho \in \partial_D I \cup \partial_C I \cup \partial_N I\,,
\ \vec\eta(\rho)\,.\,\vec\ek_i = 0\quad \forall\ \rho \in \partial_i I\,, 
i = 1,2
\right\}\,. \label{eq:xspace}
\end{align}
We also define
\begin{equation} \label{eq:yspace}
\yspace = \left\{ \vec\eta \in \Vpartialzero : 
\vec\eta(\rho) = \vec 0 \quad \forall\ \rho \in \partial_N I \right\}\,.
\end{equation}
\end{subequations}

\subsection{Curvature flow} \label{subsec:cf}

For curvature flow we assume that $\partial I = \partial_0 I \cup
\partial_1 I \cup \partial_2 I\cup \partial_C I$, 
where $\partial_0 I$ will only be nonempty for the metrics
\eqref{eq:gmu}, with $\mu\leq-1$, and \eqref{eq:gAngenent}. 

It holds that
\begin{equation}
\ddt\, L_g(\vec x(t)) = \int_I \left[ 
\nabla\,g^\frac12(\vec x)\,.\,\vec x_t + g^\frac12(\vec x)\,
\frac{(\vec x_t)_\rho\,.\,\vec x_\rho}{|\vec x_\rho|^2}
\right] |\vec x_\rho| \drho\,.
\label{eq:dLdt}
\end{equation}
Combining (\ref{eq:dLdt}), (\ref{eq:ng}), \eqref{eq:nug}, (\ref{eq:Vg}) 
and \eqref{eq:normg} yields that
\begin{align}
\ddt\, L_g(\vec x(t)) & = 
\int_I \left(\nabla\,g^\frac12(\vec x)
- \frac1{|\vec x_\rho|}
 \left[ g^\frac12(\vec x)\,\frac{\vec x_\rho}{|\vec x_\rho|}\right]_\rho 
\right).\,\vec x_t \,|\vec x_\rho| \drho 
- \sum_{p \in\partial I} (-1)^{p}\,
[g^\frac12(\vec x)\,\vec x_t\,.\,\vec\tau](p)
\nonumber \\ & 
= \int_I \left[ \vec\nu\,.\,\nabla\,g^\frac12(\vec x)
-g^\frac12(\vec x)\,\varkappa \right] \vec\nu\,. \,\vec x_t \,|\vec x_\rho| 
 \drho 
- \sum_{p \in\partial I} (-1)^{p}\,[g^\frac12(\vec x)\,\vec x_t\,.\,\vec\tau](p)
\nonumber \\ & 
= \int_I \left[ \vec\nu_g\,.\,\nabla\,g^\frac12(\vec x)
- \varkappa\right] \mathcal{V}_g \,|\vec x_\rho| \drho
- \sum_{p \in\partial I}(-1)^{p}\,[g^\frac12(\vec x)\,\vec x_t\,.\,\vec\tau](p)
 \nonumber \\ & 
= - \int_I g^{-\frac12}(\vec x) \left[\varkappa - 
\vec\nu_g\,.\,\nabla\,g^\frac12(\vec x) \right] \mathcal{V}_g 
\,|\vec x_\rho|_g \drho 
- \sum_{p \in\partial I} (-1)^{p}\,[g^\frac12(\vec x)\,\vec x_t\,.\,\vec\tau](p)
\nonumber \\ &
= - \int_I \varkappa_g\,\mathcal{V}_g \,|\vec x_\rho|_g \drho 
- \sum_{p \in\partial I} (-1)^{p}\,[g^\frac12(\vec x)\,\vec x_t\,.\,\vec\tau](p)
\,,
\label{eq:dLdtV}
\end{align}
where, we have recalled \eqref{eq:varkappag}. 
Clearly, the curvature $\varkappa_g$ is the first variation of the length
(\ref{eq:Lg}).

For the case that $\partial_0 I \not= \emptyset$, we note that in order for the
right hand side of \eqref{eq:dLdtV} to remain bounded, it is appropriate to
require that the term $\vec\nu\,.\,\nabla\,g^\frac12(\vec x)$ in the second 
line remains bounded as we approach $\partial_0 I$.
In view of 
\[
\vec\nu\,.\,\nabla\,g^\frac12(\vec x) = 
\tfrac12\,g^{-\frac12}(\vec x)\,\vec\nu\,.\,\nabla\,g(\vec x)\,,
\]
with $\nabla\,g(\vec x)\,.\,\vec\ek_2 = 0$ on $\partial_0 I$, 
since $g=0$ on $\partial_0 I$, we see that
\begin{equation} \label{eq:nue1}
\vec\nu\,.\,\vec\ek_1 = 0 \qquad \text{on } \partial_0 I
\qquad\iff\qquad
\vec x_\rho \,.\,\vec\ek_2 = 0 \qquad \text{on } \partial_0 I
\end{equation}
is a natural assumption to make. 
Moreover, in situations where the curve $\Gamma(t) = \vec x(\overline I, t)$
models the generating curve of an axisymmetric surface that is rotationally
symmetric with respect to the $x_2$--axis, the condition \eqref{eq:nue1} 
ensures that the modelled surface is smooth; see also
\cite{aximcf,axisd} for more details.

Similarly to the closed curve case, as discussed in \cite{hypbol}, we
note from (\ref{eq:dLdtV}) that \eqref{eq:Vgkg} 
is the natural $L^2$--gradient flow of $L_g$ with respect to the metric induced
by $g$, i.e.\ it satisfies \eqref{eq:gL2gradflow},
if the boundary conditions \eqref{eq:xp} hold, together with
\[
g^\frac12(\vec x)\,\vec x_t \,.\,\vec\tau = 0 \quad\text{on }\ \partial I\,.
\]
This condition holds automatically on $\partial_0 I$ and $\partial_D I$, while
on the remainder of $\partial I$ we require
\begin{equation*} 
\vec x_\rho \,.\,\vec\ek_2 = 0 
\quad \text{on }\ \partial_1 I
\qquad \text{and} \qquad
\vec x_\rho \,.\,\vec\ek_1  = 0 
\quad \text{on }\ \partial_2 I\,.
\end{equation*}
In terms of the Euclidean properties of the curve $\Gamma(t)$, 
geodesic curvature flow, i.e.\ the evolution equation \eqref{eq:Vgkg}, 
can be written as
\begin{equation} \label{eq:Vk}
g(\vec x)\,\vec x_t\,.\,\vec\nu = 
\varkappa - \tfrac12\,\vec\nu\,.\,\nabla\,\ln g(\vec x) \qquad \text{in } I\,.
\end{equation}
This formulation gives another interpretation for the boundary condition
\eqref{eq:nue1}. In fact, imposing \eqref{eq:nue1} is necessary to allow 
the right hand side of \eqref{eq:Vk} to remain bounded as we approach
$\partial_0 I$. In this way, we restrict ourselves to the class of solutions
to the evolution equation \eqref{eq:Vk}, where the normal velocity and
curvature remain bounded.
Hence in this paper, geodesic curvature flow for an open or closed curve
is given by:
\begin{subequations} \label{eq:cfall}
\begin{alignat}{2}
\mathcal{V}_g & = \varkappa_g && \qquad \text{in } I\,, \label{eq:cfalla} \\
\vec x_t \,.\,\vec\ek_1 & = 0\,,\quad \vec x_\rho \,.\,\vec\ek_2 = 0 && \qquad 
\text{on } \partial_0 I\,, \label{eq:cfallb}\\
\vec x_t & = \vec 0 && \qquad 
\text{on } \partial_D I \,, \\
\vec x_t \,.\,\vec\ek_i & = 0\,, \quad
\vec x_\rho\,.\,\vec\ek_{3-i} = 0 && \qquad 
\text{on } \partial_i I\,, \ i =1,2\,.\label{eq:cfalld}
\end{alignat}
\end{subequations}
We remark that the condition $\vec x_\rho\,.\,\vec\ek_{3-i}=0$ in
\eqref{eq:cfalld} corresponds to a $90^\circ$ contact angle condition,
which is the same as for classical Euclidean curvature flow. In particular,
it does not depend on the chosen metric $g$. That is because in this paper we 
consider conformal metrics, and so compared to the Euclidean case
only the measurement of length changes,
but the measurement of angles remains the same.

A weak formulation of curvature flow, \eqref{eq:cfall}, 
based on the strong formulation
\eqref{eq:Vk} in place of \eqref{eq:cfalla}, is given as follows,
where we also recall \eqref{eq:varkappa} and \eqref{eq:spaces}. 

\noindent
$(\BGNmckappa)$:
Let $\vec x(0) \in \Vpartialzero$. For $t \in (0,T]$
find $\vec x(t) \in [H^1(I)]^2$, with $\vec x_t(t) \in \xspace$, 
and $\varkappa(t)\in L^2(I)$ such that
\begin{subequations} \label{eq:cfweakA}
\begin{align} \label{eq:cfweakAa}
& \left( g(\vec x)\,\vec x_t\,.\,\vec\nu,\chi\,|\vec x_\rho| \right)
= \left( \varkappa - \tfrac12\,\vec\nu\,.\,\nabla\,\ln g(\vec x) 
, \chi\,|\vec x_\rho| \right) \quad \forall\ \chi \in L^2(I)\,,
\\
& \left( \varkappa\,\vec\nu,\vec\eta\, |\vec x_\rho| \right)
+ \left( \vec x_\rho,\vec\eta_\rho\,|\vec x_\rho|^{-1} \right) = 0
\quad \forall\ \vec\eta \in \xspace\,.
\label{eq:cfweakAb}
\end{align}
\end{subequations}
We note that the boundary conditions for $\vec x_t$ in
\eqref{eq:cfall} are enforced through the trial space $\xspace$, recall
\eqref{eq:spaces}, 
while the boundary conditions on $\vec x_\rho$ in \eqref{eq:cfall} follow from
\eqref{eq:cfweakAb}.

An alternative weak formulation, based directly on \eqref{eq:cfall},
together with \eqref{eq:gkgnu}, is given as follows.

\noindent
$(\BGNmc)$:
Let $\vec x(0) \in \Vpartialzero$. For $t \in (0,T]$
find $\vec x(t) \in [H^1(I)]^2$, with $\vec x_t(t) \in \xspace$, 
and $\varkappa_g(t) \in L^2(I)$ such that
\begin{subequations} \label{eq:cfweakC}
\begin{align}
& \left( g(\vec x)\,\vec x_t\,.\,\vec\nu,
\chi\,|\vec x_\rho|\right)
= \left( g^\frac12(\vec x)\,\varkappa_g,\chi\,
|\vec x_\rho|\right) \quad \forall\ \chi \in L^2(I)\,, \label{eq:cfweakCa} 
\\
& \left( g(\vec x)\,\varkappa_g\,\vec\nu,
\vec\eta\,|\vec x_\rho|\right)
+ \left( \nabla\,g^\frac12(\vec x)\,.\,\vec\eta
+ g^\frac12(\vec x)\,\frac{\vec x_\rho\,.\,\vec\eta_\rho}{|\vec x_\rho|^2}
, |\vec x_\rho| \right) 
= 0 \quad \forall\ \vec\eta \in \xspace\,.
\label{eq:cfweakCb} 
\end{align}
\end{subequations}
Once again, the boundary conditions for $\vec x_t$ in
\eqref{eq:cfall} are enforced through the trial space $\xspace$, 
while the conditions on $\vec x_\rho$ in \eqref{eq:cfalld} follow 
directly from \eqref{eq:cfweakCb}. 
In addition, 
it can be shown that \eqref{eq:cfweakCb}, for the metrics \eqref{eq:gmu}, 
with $\mu\leq-1$, and \eqref{eq:gAngenent}, 
also enforces the condition on $\vec x_\rho$ in \eqref{eq:cfallb}.
In fact, this follows by using the techniques in \cite[Appendix~A]{aximcf},
and noting that for both metrics it holds
that $\nabla\,g^\frac12(\vec x)\,.\,\vec\ek_2 = 0$ on $\partial_0 I$,
see Table~\ref{tab:g} below.

\subsection{Elastic flow} \label{subsec:ef}
For elastic flow we assume that
$\partial I = \partial_0 I \cup \partial_C I \cup \partial_N I$, 
where, as before, $\partial_0 I$ will only be nonempty for the metrics
\eqref{eq:gmu}, with $\mu\leq-1$, and \eqref{eq:gAngenent}. 

In order to discuss appropriate boundary conditions consistent with the
gradient flow structure \eqref{eq:eL2gradflow}, we need to re-visit the 
derivation of
\[
\mathcal{V}_g = - (\varkappa_g)_{s_gs_g} - \tfrac12\,\varkappa_g^3 
- S_0(\vec x)\,\varkappa_g \qquad \text{in } I\,,
\]
recall \eqref{eq:g_elastflow}, as presented in \cite[\S2.3]{hypbol}. 
Summarising the authors'
procedure there, they inferred by careful calculation that
\begin{equation} \label{eq:hypbol}
\ddt\,W_g(\vec x(t)) 
= \int_I \left[ (\varkappa_g)_{s_gs_g} + \tfrac12\,\varkappa_g^3
+ S_0(\vec x)\,\varkappa_g
\right]\mathcal{V}_g \,|\vec x_\rho|_g \drho 
\end{equation}
for closed curves, cf.\ \cite[(2.55)]{hypbol}, 
which concurs with the result of \cite[p.\ 3]{LangerS84}.
In order to generalise their work to the case of
open curves, we observe that boundary terms on the right hand side of
\eqref{eq:hypbol} would only be created through applications of integration 
by parts. In the derivation in \cite[\S2.3]{hypbol} this occurs only
in the third line of (2.47) and in the second line of (2.52). 
Regarding the former, we note that for open curves we obtain for the term in
question that
\begin{align*} 
\tfrac12\,\int_I g^{\frac12}(\vec x)\, \varkappa_g^2 \,
\vec x_\rho\,.\,(\vec x_t)_\rho \, |\vec x_\rho|^{-1} \drho &
= - \tfrac12\,\int_I (g^{\frac12}(\vec x)\, \varkappa_g^2 \,
\vec x_s)_s\,.\,\vec x_t \, |\vec x_\rho| \drho \nonumber \\ & \qquad
- \tfrac12\,
 \sum_{p\in\partial I} (-1)^p\,[g^{\frac12}(\vec x)\, \varkappa_g^2
  \,\vec x_t\,.\,\vec\tau](p) \,. 
\end{align*}
In addition, the three applications of integration by parts in 
\cite[(2.52)]{hypbol} give rise to the following boundary terms:
\begin{align*}
& \int_I \varkappa_g\,\mathcal{V}_{ss}\, |\vec x_\rho| \drho
+ \tfrac12\,\int_I \varkappa_g\,\vec x_s\,.\,\nabla\,\ln g(\vec x)
\,\mathcal{V}_s \,|\vec x_\rho| \drho \\ & \quad
 = - \int_I (\varkappa_g)_{s} \,\mathcal{V}_s\,|\vec x_\rho| \drho 
- \sum_{p\in\partial I} (-1)^p\, [\varkappa_g\,\mathcal{V}_s](p) 
\nonumber \\ & \qquad
- \tfrac12\,\int_I (\varkappa_g\,\vec x_s\,.\,\nabla\,\ln g(\vec x))_s
\,\mathcal{V} \,|\vec x_\rho| \drho
- \tfrac12\,\sum_{p\in\partial I} (-1)^p\, 
[\varkappa_g\,\vec x_s\,.\,\nabla\,\ln g(\vec x)\,\mathcal{V}](p)
\nonumber \\ & \quad
 = \int_I (\varkappa_g)_{ss} \,\mathcal{V}\,|\vec x_\rho| \drho 
+ \sum_{p\in\partial I} (-1)^p\, [(\varkappa_g)_s\,\mathcal{V}](p) 
- \sum_{p\in\partial I} (-1)^p\, [\varkappa_g\,\mathcal{V}_s](p) 
\nonumber \\ & \qquad
- \tfrac12\,\int_I (\varkappa_g\,\vec x_s\,.\,\nabla\,\ln g(\vec x))_s
\,\mathcal{V} \,|\vec x_\rho| \drho
- \tfrac12\,\sum_{p\in\partial I} (-1)^p\, 
[\varkappa_g\,\vec x_s\,.\,\nabla\,\ln g(\vec x)\,\mathcal{V}](p)\,.
\end{align*}
Hence, overall, we obtain the boundary terms
\begin{equation} \label{eq:B123}
- \sum_{p\in\partial I} (-1)^p\left[ \tfrac12\,
g^{\frac12}(\vec x)\, \varkappa_g^2
\,\vec x_t\,.\,\vec\tau
- [(\varkappa_g)_s - \tfrac12\,\varkappa_g\,(\ln g(\vec x))_s]
\,\mathcal{V} + \varkappa_g\,\mathcal{V}_s\right](p) \,.
\end{equation}
An alternative way to derive the boundary terms uses the approach of
Langer and Singer, see \cite[p.\ 3]{LangerS84}. 
We now derive natural conditions that make \eqref{eq:B123} vanish for the
boundary conditions considered in \eqref{eq:xp}. 
On $\partial_C I \cup \partial_N I$, the first two terms vanish. 
On $\partial_N I$ we require $\varkappa_g=0$ to make the third term zero, 
while the clamped boundary conditions 
\begin{equation} \label{eq:veczeta}
(-1)^{\id+1}\,\vec\tau = \vec\zeta
\quad \text{on }\ \partial_C I\,,
\end{equation}
with $\vec\zeta : \partial_C I \to \bS^1 = \{\vec z : \bR^2 : |\vec z| = 1\}$,
ensure as usual that $\mathcal V_s = 0$ on $\partial_C I$;
see e.g.\ Lemma~37(ii) in \cite{bgnreview}. 
For the compact notation in \eqref{eq:veczeta} we have used the fact that
the identity function $\id$ always maps elements of $\partial I$ to either 0 
or 1.
On $\partial_0 I$ the first term in \eqref{eq:B123} is zero, and on requiring
\begin{equation} \label{eq:kappags}
(\varkappa_g)_s - \tfrac12\,\varkappa_g\,(\ln g(\vec x))_s = 0
\qquad \text{on }\ \partial_0 I
\end{equation}
we can make the second term vanish. The third term
vanishes if $\mathcal V_s = 0$ on $\partial_0 I$, which similarly to the
clamped boundary conditions follows from ensuring the 
natural assumption \eqref{eq:nue1}. 

Overall, we obtain the following strong formulation for
elastic flow for open or closed curves, consistent with the 
gradient flow structure \eqref{eq:eL2gradflow}. 
\begin{subequations} \label{eq:efall}
\begin{alignat}{2}
\mathcal{V}_g & = - (\varkappa_g)_{s_gs_g} - \tfrac12\,\varkappa_g^3 
- S_0(\vec x)\,\varkappa_g && \qquad \text{in } I\,,\\
\vec x_t \,.\,\vec\ek_1 & = 0\,,\quad
\vec x_\rho \,.\,\vec\ek_2 = 0\,,\quad
[\varkappa_g]_\rho - \tfrac12\,\varkappa_g\,[\ln g(\vec x)]_\rho = 0 && 
\qquad \text{on }\ \partial_0 I\,,\label{eq:efbc0} \\
\vec x_t & = \vec 0\,,\quad (-1)^{\id+1}\,\vec\tau = \vec\zeta &&
\qquad \text{on }\ \partial_C I\,,\label{eq:efbcC} \\
\vec x_t & = \vec 0\,,\quad \varkappa_g = 0 &&
\qquad \text{on }\ \partial_N I\,. \label{eq:efbcN} 
\end{alignat}
\end{subequations}
Other types of boundary conditions, corresponding to so-called free
and semi-free boundary nodes, see e.g.\ \cite{axipwf}, are also
possible. For example, in the semi-free cases one could require 
\begin{equation}\label{eq:SF1}
  \vec x_t\,.\,\vec\ek_i = 0\,,\quad \vec x_\rho\,.\,\vec\ek_{3-i} =
  0\,, \quad [\varkappa_g]_\rho -\tfrac12 \varkappa_g[\ln g(\vec
  x)]_\rho = 0 \qquad \text{on }\ \partial_i I\,,\ i=1,2\,,
\end{equation}
or
\begin{equation}\label{eq:SF2}
  \vec x_t\,.\,\vec\ek_i = 0\,,\quad \varkappa_g = 0\,, \quad [\varkappa_g]_\rho -\tfrac12 \varkappa_g[\ln g(\vec x)]_\rho = 0 
\qquad \text{on }\ \partial_i I\,,\ i=1,2\,.
\end{equation}
These conditions will also lead to vanishing boundary terms in \eqref{eq:B123}.
Observe that \eqref{eq:SF1} involves a $90^\circ$ contact angle condition,
while \eqref{eq:SF2} does not fix the angle but requires curvature to be zero,
similarly to the Navier condition \eqref{eq:efbcN}. 
The boundary condition 
$[\varkappa_g]_\rho -\tfrac12 \varkappa_g[\ln g(\vec x)]_\rho = 0$
seems to be completely new in the literature, as well as the various
combinations of it with other boundary conditions.
In order to simplify the presentation of what follows, we will
concentrate on the conditions in \eqref{eq:efall} from now
on. However, using the techniques from \cite{pwf,axipwf} it is
straightforward to extend our weak formulations and finite element
approximations to these other types of boundary conditions as well.

Combining the techniques in \cite{hypbolpwf,axipwf}, it is not difficult to
derive the following two weak formulations of elastic flow. Here the equations
in the interior of $I$ are the same as for $(\BGNpwf)$ and $(\BGNpwfwf)$
in \cite{hypbolpwf}, while the treatment of the boundary conditions,
achieved through the spaces $\xspace$ and $\yspace$ from \eqref{eq:spaces}, 
is very
similar to the approach taken in \cite{axipwf} in the context of axisymmetric
Willmore flow.

\noindent
$(\BGNpwf)$:
Let $\vec x(0) \in \Vpartialzero$. For $t \in (0,T]$
find $\vec x(t) \in [H^1(I)]^2$, with $\vec x_t(t) \in \xspace$, 
$\vec y(t) \in \yspace$ and $\varkappa \in L^2(I)$ such that 
\begin{subequations} \label{eq:efweakP}
\begin{align}
& \left(g^\frac32(\vec x)\,\vec x_t\,.\,\vec\nu, \vec\chi\,.\,\vec\nu\,
|\vec x_\rho| \right)
= -\tfrac12 \left( g^{-\frac12}(\vec x)
\left(\varkappa - \tfrac12\,\vec\nu\,.\,\nabla\,\ln g(\vec x) \right)^2
, \vec\chi_s\,.\,\vec\tau\,|\vec x_\rho| \right) \nonumber \\ & \qquad 
+ \tfrac14 \left( g^{-\frac12}(\vec x)
 \left(\varkappa - \tfrac12\,\vec\nu\,.\,\nabla\,\ln g(\vec x) \right)^2
, \vec\chi\,.\,(\nabla\,\ln\,g(\vec x))\,|\vec x_\rho| \right)
\nonumber \\ & \qquad
+ \tfrac12 \left( 
g^{-\frac12}(\vec x)
\left(\varkappa - \tfrac12\,\vec\nu\,.\,\nabla\,\ln g(\vec x) \right) 
\,\vec\nu ,(D^2\,\ln\,g(\vec x))\,
\vec\chi\,|\vec x_\rho| \right) \nonumber \\ & \qquad
-\tfrac12\left( g^{-\frac12}(\vec x)
\left(\varkappa - \tfrac12\,\vec\nu\,.\,\nabla\,\ln g(\vec x) \right) 
[\ln\,g(\vec x)]_s, \vec\nu\,.\,\vec\chi_s
\,|\vec x_\rho| \right)
+ \left(\vec y_s\,.\,\vec\nu, \vec\chi_s\,.\,\vec\nu\,|\vec x_\rho| \right)
\nonumber \\ & \qquad
+ \left( \varkappa\, \vec y^\perp,
\vec\chi_s\,|\vec x_\rho| \right)
\qquad \forall  \ \vec\chi \in \xspace\,, \label{eq:efweakPa} \\
& \left(g^{-\frac12}(\vec x)
\left(\varkappa - \tfrac12\,\vec\nu\,.\,\nabla\,\ln g(\vec x) \right) 
- \vec y\,.\,\vec\nu, \chi\,|\vec x_\rho| \right) = 0
\qquad \forall\ \chi \in L^2(I)\,, \label{eq:efweakPb} \\
& \left(\varkappa\,\vec\nu,\vec\eta\, |\vec x_\rho| \right)
+ \left( \vec x_s,\vec\eta_s\,|\vec x_\rho| \right) 
= \sum_{p \in \partial_C I} [\vec\zeta\,.\,\vec\eta](p)
\quad \forall\ \vec\eta \in \yspace\,. \label{eq:efweakPc}
\end{align}
\end{subequations}
The consistency of \eqref{eq:efweakP}, in the case $I = \RZ$, was shown in 
\cite[Appendix~A.1]{hypbolpwf}. As far as the boundary conditions are
concerned, we note that the conditions on $\vec x_t$ in \eqref{eq:efall} 
are enforced through the trial space $\xspace$, recall \eqref{eq:spaces}.
The second conditions in \eqref{eq:efbc0} and \eqref{eq:efbcC}, respectively,
are both enforced through \eqref{eq:efweakPc}.
In addition, we note from \eqref{eq:efweakPb} and \eqref{eq:varkappag}
that $\vec y\,.\,\vec\nu = \varkappa_g$, and so 
the trial space $\yspace$ yields the second condition in \eqref{eq:efbcN}. 
It remains to validate that \eqref{eq:efweakPa} weakly enforces 
the third condition in \eqref{eq:efbc0}. This can be achieved
on closely following the argument in \cite[(A.3)--(A.9)]{hypbolpwf},
noting in particular that the integration by parts in \cite[(A.5)]{hypbolpwf} 
gives rise to the boundary term 
$(\varkappa_g)_s - \tfrac12\,\varkappa_g\,(\ln g(\vec x))_s$ on 
$\partial_0 I$, which enforces \eqref{eq:kappags}, as required.

\noindent
$(\BGNpwfwf)$:
Let $\vec x(0) \in \Vpartialzero$. For $t \in (0,T]$
find $\vec x(t) \in [H^1(I)]^2$, with $\vec x_t(t) \in \xspace$, 
$\vec y_g(t) \in \yspace$ and $\varkappa_g \in L^2(I)$ such that 
\begin{subequations} \label{eq:efweakQ}
\begin{align} 
& \left(g(\vec x)\,\vec x_t\,.\,\vec\nu, \vec\chi\,.\,\vec\nu\,
|\vec x_\rho|_g \right) 
\nonumber \\ &\
= -\tfrac12 \left( \varkappa_g^2 
- \vec y_g\,.\,\nabla\,\ln\,g(\vec x), \left[\vec\chi_s\,.\,\vec\tau + 
\tfrac12\,\vec\chi\,.\,\nabla\,\ln\,g(\vec x)\right] |\vec x_\rho|_g
 \right) \nonumber \\ & \quad 
+ \tfrac12 \left((D^2\,\ln\,g(\vec x))\,\vec y_g, \vec\chi\,
|\vec x_\rho|_g \right) 
+  \left( g^\frac12(\vec x)\,\varkappa_g\,\vec y_g\,.\,\vec\nu
+ \tfrac12\,(\vec y_g)_s\,.\,\vec\tau, \vec\chi\,.\,(\nabla\,\ln\,g(\vec x))\,
|\vec x_\rho|_g \right) \nonumber \\ & \quad 
+ \left( g^\frac12\,\varkappa_g,
\vec\chi_s\,.\,\vec y_g^\perp\,|\vec x_\rho|_g \right)
+ \left( (\vec y_g)_s\,.\,\vec\nu, 
\vec\chi_s\,.\,\vec\nu\,|\vec x_\rho|_g  \right)
\qquad \forall\ \vec\chi \in \xspace\,,\label{eq:efweakQa} \\
& \left(\varkappa_g - g^\frac12(\vec x)\,\vec y_g\,.\,\vec\nu, 
\chi\,|\vec x_\rho|_g \right) = 0
\qquad \forall\ \chi \in L^2(I)\,, \label{eq:efweakQb} \\
& \left(g^\frac12(\vec x)\,\varkappa_g\,\vec\nu, \vec\eta\,|\vec x_\rho|_g
\right) 
+ \left(\vec x_s,\vec\eta_s\,|\vec x_\rho|_g\right) 
+ \tfrac12 \left( \nabla\,\ln\,g(\vec x), \vec\eta\,|\vec x_\rho|_g\right) 
= \sum_{p \in \partial_C I} [g^{\frac12}(\vec x)\,\vec\zeta\,.\,\vec\eta](p)
\nonumber \\ & \hspace{11cm}
\quad \forall\ \vec\eta \in \yspace\,. \label{eq:efweakQc}
\end{align}
\end{subequations}
The consistency of \eqref{eq:efweakQ}, in the case $I = \RZ$, was shown in 
\cite[Appendix~A.2]{hypbolpwf}. 
We note that the boundary conditions on $\vec x_t$ in \eqref{eq:efall} 
are once again enforced through the trial space $\xspace$.
The second condition in \eqref{eq:efbcC} is enforced through 
\eqref{eq:efweakQc},
and the same can be shown for the second condition in \eqref{eq:efbc0},
similarly to \eqref{eq:cfweakC}. 
In addition, \eqref{eq:efweakQb} together with the trial space
$\yspace$ yields the second condition in \eqref{eq:efbcN}. 
It remains to validate that \eqref{eq:efweakQa} weakly enforces 
the third condition in \eqref{eq:efbc0}. As before, this can be done
on closely following the argument in \cite[(A.10)--(A.19)]{hypbolpwf}, 
and collecting the terms that appear on $\partial_0 I$ due to integration by 
parts. In fact, \cite[(A.13), (A.14)]{hypbolpwf} yield the boundary 
term $(\varkappa_g)_s - \tfrac12\,\varkappa_g\,(\ln g(\vec x))_s
+ (g^\frac12(\vec x)\,\varkappa - g(\vec x)\,\varkappa_g)
\,\vec y_g\,.\,\vec\tau$ on $\partial_0 I$, which thanks
to $\vec y_g \in \yspace$ and $\vec x_\rho\,.\,\vec\ek_1 = 0$ collapses to
enforcing \eqref{eq:kappags}. 

\setcounter{equation}{0}
\section{Semidiscrete finite element approximations} \label{sec:sd}

Let $[0,1]=\cup_{j=1}^J I_j$, $J\geq3$, be a
decomposition of $[0,1]$ into intervals given by the nodes $q_j$,
$I_j=[q_{j-1},q_j]$. 
For simplicity, and without loss of generality,
we assume that the subintervals form an equipartitioning of $[0,1]$,
i.e.\ that 
\begin{equation*} 
q_j = j\,h\,,\quad \mbox{with}\quad h = J^{-1}\,,\qquad j=0,\ldots, J\,.
\end{equation*}
Clearly, if $I=\RZ$ we identify $0=q_0 = q_J=1$. 
In addition, we let $q_{J+1}=q_1$.

The necessary finite element spaces are defined as follows:
\[
V^h = \{\chi \in C(\overline I) : \chi\!\mid_{I_j} 
\text{ is affine for}\ j=1,\ldots,J\}
\quad\text{and}\quad \Vh = [V^h]^2\,.
\]
In addition, we define $\Vhpartialzero = \Vh \cap \Vpartialzero$ and
$W^h_{\partial_0} = \{ \chi \in V^h : \chi(\rho) = 0
\quad \forall\ \rho \in \partial_0 I\}$, as well as
$\xspace^h = \xspace \cap \Vh$ and $\yspace^h = \yspace \cap \Vh$,
recall \eqref{eq:spaces}. 
We define the mass lumped $L^2$--inner product $(u,v)^h$,
for two piecewise continuous functions, with possible jumps at the 
nodes $\{q_j\}_{j=1}^J$, via
\begin{equation}
( u, v )^h = \tfrac12\,h\,\sum_{j=1}^J 
\left[(u\,v)(q_j^-) + (u\,v)(q_{j-1}^+)\right],
\label{eq:ip0}
\end{equation}
where we define
$u(q_j^\pm)=\underset{\delta\searrow 0}{\lim}\ u(q_j\pm\delta)$.
The definition (\ref{eq:ip0}) naturally extends to vector valued functions.
Moreover, 
let $(\cdot,\cdot)^\diamond$ denote a discrete $L^2$--inner product based on
some numerical quadrature rule. In particular, 
for two piecewise continuous functions, with possible jumps at the 
nodes $\{q_j\}_{j=1}^J$, we let 
$(u,v)^\diamond = I^\diamond(u\,v)$, where
\begin{equation} \label{eq:Idiamond}
I^\diamond(f) = h \sum_{j=1}^J \sum_{k = 1}^K w_k\,
f(\alpha_k\,q_{j-1} + (1-\alpha_k)\,q_j)\,,\quad
w_k > 0\,,\ \alpha_k \in [0,1]\,,\quad k = 1,\ldots,K\,,
\end{equation}
with $K\geq2$, $\sum_{k=1}^K w_k = 1$,
and with distinct $\alpha_k$, $k=1,\ldots,K$.
A special case is
$(\cdot,\cdot)^\diamond = (\cdot,\cdot)^h$, recall
(\ref{eq:ip0}), but we also allow for more accurate quadrature rules.

Let $(\vec X^h(t))_{t\in[0,T]}$, with $\vec X^h(t)\in \Vh$, 
be an approximation to $(\vec x(t))_{t\in[0,T]}$. Then, 
similarly to (\ref{eq:tau}), we set
\begin{equation} \label{eq:tauh}
\vec\tau^h = \vec X^h_s = \frac{\vec X^h_\rho}{|\vec X^h_\rho|} 
\qquad \mbox{and} \qquad \vec\nu^h = -(\vec\tau^h)^\perp\,.
\end{equation}
For later use, we let $\vec\omega^h \in \Vh$ be the mass-lumped 
$L^2$--projection of $\vec\nu^h$ onto $\Vh$, i.e.\
\begin{equation} \label{eq:omegah}
\left(\vec\omega^h, \vec\varphi \, |\vec X^h_\rho| \right)^h 
= \left( \vec\nu^h, \vec\varphi \, |\vec X^h_\rho| \right)
= \left( \vec\nu^h, \vec\varphi \, |\vec X^h_\rho| \right)^h
\qquad \forall\ \vec\varphi\in\Vh\,.
\end{equation}

\subsection{Curvature flow}

We consider the following finite element approximation of 
$(\BGNmckappa)$, recall \eqref{eq:cfweakA}. It is closely related to the
approximation \cite[(3.3), (3.10)]{hypbol} for closed curve evolutions.

\noindent
$(\BGNmckappa_h)^h$:
Let $\vec X^h(0) \in \Vhpartialzero$. For $t \in (0,T]$, 
find $(\vec X^h(t), \kappa^h(t)) \in \Vh \times V^h$,
such that $\vec X^h_t(t) \in \xspace^h$, and such that
\begin{subequations} \label{eq:sd}
\begin{align} \label{eq:sda}
& \left(g(\vec X^h)\,\vec X^h_t, \chi\,\vec\omega^h\,|\vec X^h_\rho|\right)^h
=\left( \mathcal K(\kappa^h, \vec\omega^h, \vec X^h), 
\chi\,|\vec X^h_\rho|\right)^h
\quad \forall\ \chi \in V^h\,, \\ &
\left(\kappa^h\,\vec\omega^h, \vec\eta\,|\vec X^h_\rho|\right)^h
+ \left(\vec X^h_\rho, \vec\eta_\rho\,|\vec X^h_\rho|^{-1}\right) 
= 0 \quad \forall\ \vec\eta \in \xspace^h\,,
\label{eq:sdb}
\end{align}
\end{subequations}
where we have defined
$\mathcal K(\kappa^h, \vec\omega^h, \vec X^h) \in V^h$ by
\begin{equation} \label{eq:mathcalK}
\mathcal K(\kappa^h, \vec\omega^h, \vec X^h)(q_j) = 
\begin{cases}
\kappa^h(q_j) - \tfrac12\,\vec\omega^h(q_j)\,.\,\nabla\,\ln g(\vec X^h(q_j))
& q_j \in \overline I \setminus \partial_0 I\,, \\
\begin{cases}
(1 - \mu)\,\kappa^h(q_j) & \eqref{eq:gmu}\,, \\
n\,\kappa^h(q_j) + \tfrac12\,\vec X^h(q_j)\,.\,\vec\omega^h(q_j)
 & \eqref{eq:gAngenent} \,,
\end{cases} 
& q_j \in \partial_0 I\,.
\end{cases}
\end{equation}
To motivate the choice \eqref{eq:mathcalK}, we observe that 
it follows from \eqref{eq:nue1}, Table~\ref{tab:g} and L'Hospital's rule
that
\[
\left[
\varkappa -\tfrac12\,  \vec{\nu}\,.\, \nabla\, \ln g(\vec{x}) \right]
\to
\begin{cases}
(1-\mu)\,\varkappa & \eqref{eq:gmu}\,,\\
n\,\varkappa + \tfrac12\,(\vec x\,.\,\vec\ek_2)\,\vec\nu\,.\,\vec\ek_2
= n\,\varkappa + \tfrac12\,\vec x\,.\,\vec\nu& \eqref{eq:gAngenent} \,,
\end{cases}
\ \text{as $\rho \to \rho_0 \in \partial_0 I$\,.} 
\]
On recalling \eqref{eq:omegah}, we note that replacing $\vec\omega^h$ with
$\vec\nu^h$ in \eqref{eq:sdb} does not change the scheme. On the other hand,
for nonconstant $g$ we must use $\vec\omega^h$ in the first term in 
\eqref{eq:sda} to allow for an existence and uniqueness proof on the fully
discrete level, see Lemma~\ref{lem:ex} below. In addition, as 
\eqref{eq:mathcalK} is defined vertex-wise, we use the vertex normals
$\vec\omega^h$ there.

It does not appear possible to prove a stability result for the scheme
\eqref{eq:sd}. However, thanks to \eqref{eq:sdb} the scheme
$(\BGNmckappa_h)^h$ satisfies a weak equidistribution property, i.e.\ it can be
shown that neighbouring elements of $\Gamma^h(t) = \vec X^h(\overline I,t)$ 
have the same length if they are not parallel.
In effect, the side condition \eqref{eq:sdb} induces some tangential
motion, which ensures that the equidistribution property holds at all times.
The authors first introduced and discussed schemes with such an implicit
tangential motion in \cite{triplej} and have used it in a series of works
since.
We refer to the recent
review article \cite{bgnreview} for more details on that aspect of the scheme.

As an alternative approximation, we propose the following finite element 
approximation of $(\BGNmc)$, recall \eqref{eq:cfweakC}. 
It is the natural extension to the open curve case of the 
semidiscrete analogue of \cite[(3.5), (3.18)]{hypbol}.

\noindent
$(\BGNmc_{h})^{h}$:
Let $\vec X^h(0) \in \Vhpartialzero$. For $t = (0,T]$, 
find $(\vec X^h(t), \kappa_g^h(t)) \in \Vh \times W^h_{\partial_0}$,
such that $\vec X^h_t(t) \in \xspace^h$, and such that
\begin{subequations} \label{eq:sdnew}
\begin{align} \label{eq:sdnewa}
& \left(g(\vec X^h)\,\vec X^h_t, \chi\,\vec\nu^h\,
|\vec X^h_\rho|\right)^{h}
= \left(g^\frac12(\vec X^h)\,\kappa_g^h,
\chi\,|\vec X^h_\rho|\right)^{h} \quad \forall\ \chi \in W^h_{\partial_0}\,,\\
& \left(g(\vec X^h)\,\kappa_g^h\,\vec\nu^h,
\vec\eta\,|\vec X^h_\rho|\right)^{h}
+ \left( \nabla\,g^\frac12(\vec X^h) ,
\vec\eta \, |\vec X^h_\rho| \right)^{h}
+ \left( g^\frac12(\vec X^{h})\,
\vec X^h_\rho,\vec\eta_\rho\, |\vec X^h_\rho|^{-1} \right)^{h}
= 0 
\nonumber \\ & \hspace{10cm}
\quad \forall\ \vec\eta \in \xspace^h\,.
\label{eq:sdnewb}
\end{align}
\end{subequations}
Observe that we fix $\kappa_g^h(t)$ to be zero on $\partial_0 I$, since these
values can be arbitrary. Setting them to zero allows for a uniqueness result
on the fully discrete level. Note furthermore that on replacing 
$\vec\nu^h$ with $\vec\omega^h$ we obtain a related, but different,
approximation of $(\BGNmc)$, that will have the analogous 
theoretical properties.
We prefer the more natural variant as stated, in agreement with the closed
curve scheme from \cite{hypbol}. 
Similarly to $(\BGNmckappa_h)^h$, the
scheme $(\BGNmc_{h})^{h}$ also exhibits some implicit tangential motion,
but, in contrast to $(\BGNmckappa_h)^h$, it does not appear possible to
derive rigorous results on it. However, the scheme does admit a stability
bound. To formulate this result, and on recalling (\ref{eq:Lg}), 
for $\vec Z \in \Vh$ we let
\begin{equation} \label{eq:Lgh}
L_g^{h}(\vec Z) = \left( g^\frac12(\vec Z),|\vec Z_\rho| \right)^{h} .
\end{equation}
Then we can prove the following discrete analogue of (\ref{eq:gL2gradflow}) 
for the scheme \eqref{eq:sdnew}.  

\begin{thm} \label{thm:cfstab}
Let $(\vec X^h(t),\kappa^h_g(t)) \in \Vh\times W^h_{\partial_0}$, 
for $t\in (0,T]$,
be a solution to $(\BGNmc_h)^h$.
Then the solution satisfies the stability bound 
\begin{equation} \label{eq:gL2gradflowh}
\ddt L_g^h(\vec X^h(t)) + \left(g^\frac12(\vec X^h)\,\kappa_g^h,
\kappa_g^h\,|\vec X^h_\rho|\right)^{h} = 0\,.
\end{equation}
\end{thm}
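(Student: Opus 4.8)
The plan is to run the standard Barrett--Garcke--N\"urnberg testing argument for discrete gradient flows: differentiate the discrete geodesic length $L_g^h$ in time and feed the result back into the two equations of $(\BGNmc_h)^h$ with the right choices of test function. First I would expand $L_g^h(\vec X^h)$ using \eqref{eq:Lgh} and \eqref{eq:ip0}, noting that on each $I_j$ the quantity $|\vec X^h_\rho|$ is constant while $g^\frac12(\vec X^h)$ enters only through its nodal values, so that $L_g^h(\vec X^h(t))$ is a finite sum of smooth functions of $t$. Differentiating in $t$ (the reference nodes $q_j$ are fixed, so $\ddt$ passes through the sum) and using $\ddt\,g^\frac12(\vec X^h) = \nabla\,g^\frac12(\vec X^h)\,.\,\vec X^h_t$ together with $\ddt\,|\vec X^h_\rho| = |\vec X^h_\rho|^{-1}\,\vec X^h_\rho\,.\,(\vec X^h_t)_\rho$, I expect to arrive at
\[
\ddt\,L_g^h(\vec X^h) = \left(\nabla\,g^\frac12(\vec X^h), \vec X^h_t\,|\vec X^h_\rho|\right)^h + \left(g^\frac12(\vec X^h)\,\vec X^h_\rho, (\vec X^h_t)_\rho\,|\vec X^h_\rho|^{-1}\right)^h\,.
\]

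The key observation is that the right-hand side is exactly the sum of the last two terms of \eqref{eq:sdnewb} with the test function $\vec\eta = \vec X^h_t$. Since the scheme requires $\vec X^h_t(t) \in \xspace^h$, this choice is admissible, and \eqref{eq:sdnewb} therefore lets me replace the two terms by the remaining (curvature) term,
\[
\ddt\,L_g^h(\vec X^h) = -\left(g(\vec X^h)\,\kappa_g^h\,\vec\nu^h, \vec X^h_t\,|\vec X^h_\rho|\right)^h\,.
\]
Next I would test \eqref{eq:sdnewa} with $\chi = \kappa_g^h$, which is admissible because $\kappa_g^h(t) \in \Whpartialzero$ is precisely the test space there. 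After reordering the scalar and normal products, the left-hand side of \eqref{eq:sdnewa} at $\chi = \kappa_g^h$ coincides with the expression just obtained, while its right-hand side is $\left(g^\frac12(\vec X^h)\,\kappa_g^h, \kappa_g^h\,|\vec X^h_\rho|\right)^h$. Substituting yields $\ddt\,L_g^h(\vec X^h) = -\left(g^\frac12(\vec X^h)\,\kappa_g^h, \kappa_g^h\,|\vec X^h_\rho|\right)^h$, which is \eqref{eq:gL2gradflowh}.

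The computation is short, and the only genuine difficulty is bookkeeping. One must differentiate the mass-lumped functional carefully and verify, term by term, that the two pieces produced match the two non-curvature terms of \eqref{eq:sdnewb} with the identical mass-lumped quadrature; this exact matching is what produces the clean identity rather than an inequality or a consistency error. In particular, the fact that $L_g^h$ in \eqref{eq:Lgh} is defined with the same lumped inner product $(\cdot,\cdot)^h$ appearing in the stiffness-type term of \eqref{eq:sdnewb} is essential, since a mismatched quadrature would break the cancellation. A secondary point worth recording explicitly is the admissibility of the two test functions, $\vec X^h_t \in \xspace^h$ and $\kappa_g^h \in \Whpartialzero$: it is exactly this compatibility between trial and test spaces that allows the two equations of $(\BGNmc_h)^h$ to be chained together.
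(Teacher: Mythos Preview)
Your proposal is correct and follows essentially the same argument as the paper: test \eqref{eq:sdnewb} with $\vec\eta = \vec X^h_t \in \xspace^h$, test \eqref{eq:sdnewa} with $\chi = \kappa_g^h \in \Whpartialzero$, and identify the two non-curvature terms of \eqref{eq:sdnewb} with $\ddt\,L_g^h(\vec X^h)$. The paper's proof is simply a more compressed version of what you wrote, stating the two test-function choices up front and recognising the resulting sum as the time derivative of $L_g^h$ without spelling out the differentiation of the lumped functional.
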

\begin{proof}
Choosing $\chi = \kappa_g^h \in W^h_{\partial_0}$ in \eqref{eq:sdnewa} 
and $\vec\eta = \vec X^h_t \in \xspace^h$ in \eqref{eq:sdnewb} yields 
\begin{equation*}
\left( \nabla\,g^\frac12(\vec X^h) ,
\vec X^h_t \, |\vec X^h_\rho| \right)^{h}
+ \left( g^\frac12(\vec X^{h})\,
\vec X^h_\rho,\vec X^h_{\rho,t}\, |\vec X^h_\rho|^{-1} \right)^{h}
+  \left(g^\frac12(\vec X^h)\,\kappa_g^h,
\kappa_g^h\,|\vec X^h_\rho|\right)^{h} = 0\,,
\end{equation*}
and so we obtain the desired result \eqref{eq:gL2gradflowh}. 
\end{proof}

The identity \eqref{eq:gL2gradflowh}, after integration in time, yields
a stability bound for the discrete length, and it is a direct discrete analogue
of \eqref{eq:gL2gradflow}.

\subsection{Elastic flow}

Following the approach by the authors in \cite{hypbolpwf}, it is
straightforward to derive a semidiscrete approximation of
$(\BGNpwf)$, in the case that $\partial_0 I = \emptyset$.
The derived scheme, which will correspond to 
\cite[$(\BGNpwf_h)^h$]{hypbolpwf} with the natural changes to the test and
trial spaces, can be shown to be stable.
Moreover, and similarly to $(\BGNmckappa_h)^h$, the derived scheme will 
satisfy an equidistribution property. 
However, as it appears to be highly nontrivial to
extend the approximation to
the case $\partial_0 I \not= \emptyset$, we do not pursue this variant
any further in this paper.

On the other hand, the following discretisation based on the formulation
$(\BGNpwfwf)$ can naturally deal with all the considered boundary conditions.
It is the natural extension to the open curve case of the 
semidiscrete scheme \cite[$(\BGNpwfwf_h)^\diamond$]{hypbolpwf}.

\noindent
$(\BGNpwfwf_h)^\diamond$:
Let $\vec X^h(0) \in \Vhpartialzero$. 
For $t \in (0,T]$, find $(\vec X^h(t), \kappa^h_g(t), \vec Y^h_g(t)) 
\in \Vh \times V^h \times \yspace^h$,
with $\vec X^h_t(t) \in \xspace^h$, such that
\begin{subequations} \label{eq:B88}
\begin{align} 
& \left(g(\vec X^h)\,\vec X^h_t\,.\,
\vec\omega^h, \vec\chi\,.\,\vec\omega^h\,
|\vec X^h_\rho|_g \right)^\diamond
- \left((\vec Y^h_g)_s\,.\,\vec\nu^h, \vec\chi_s\,.\,\vec\nu^h
\,|\vec X^h_\rho|_g \right)^\diamond
\nonumber \\ & \quad
 = -\tfrac12 \left( (\kappa^h_g\,)^2 
- \vec Y^h_g\,.\,\nabla\,\ln\,g(\vec X^h), \left[\vec\chi_s\,.\,\vec\tau^h + 
\tfrac12\,\vec\chi\,.\,\nabla\,\ln\,g(\vec X^h)\right] |\vec X^h_\rho|_g
 \right)^\diamond \nonumber \\ & \qquad
+ \tfrac12 \left((D^2\,\ln\,g(\vec X^h))\,\vec Y^h_g, \vec\chi\,
|\vec X^h_\rho|_g \right)^\diamond \nonumber \\ & \qquad
+  \left( g^\frac12(\vec X^h)\,\kappa^h_g\,\vec Y^h_g\,.\,\vec\nu^h
+ \tfrac12\,(\vec Y^h_g)_s\,.\,\vec\tau^h, \vec\chi\,.\,
(\nabla\,\ln\,g(\vec X^h))\,
|\vec X^h_\rho|_g \right)^\diamond \nonumber \\ & \qquad
+ \left( g^\frac12(\vec X^h)\,\kappa^h_g,
\vec\chi_s\,.\,(\vec Y^h_g)^\perp\,|\vec X^h_\rho|_g \right)^\diamond
\quad \forall\ \vec\chi \in \xspace^h\,, \label{eq:B88a} \\
&\left(\kappa^h_g - g^\frac12(\vec X^h)\,\vec Y^h_g\,.\,\vec\nu^h, 
\chi\,|\vec X^h_\rho|_g \right)^\diamond = 0
\quad \forall\ \chi \in V^h\,, \label{eq:B88b} \\
&
\left(g^\frac12(\vec X^h)\,\kappa_g^h\,\vec\nu^h, 
\vec\eta\,|\vec X^h_\rho|_g \right)^\diamond
+ \left(\vec X^h_s,\vec\eta_s\,|\vec X^h_\rho|_g\right)^\diamond
+ \tfrac12 \left( \nabla\,\ln\,g(\vec X^h), \vec\eta\,|\vec
X^h_\rho|_g\right)^\diamond 
\nonumber \\ & \quad
= \sum_{p \in \partial_C I} [g^{\frac12}(\vec X^h)\,\vec\zeta\,.\,\vec\eta](p)
\quad \forall\ \vec\eta \in \yspace^h\,. \label{eq:B88c}
\end{align}
\end{subequations}
If we replace $\vec\omega^h$ with $\vec\nu^h$ in \eqref{eq:B88a} we obtain a
related, but different, approximation of $(\BGNpwfwf)$ with the analogous
theoretical properties. We prefer the scheme as stated because it simplifies
the presentation of the existence and uniqueness proof on the fully
discrete level, see Lemma~\ref{lem:exg} below. In addition, as stated the
scheme is consistent with the closed curve variant from \cite{hypbolpwf}.

We have the following discrete analogue of \eqref{eq:eL2gradflow}. 

\begin{thm} \label{thm:stabg}
Let $(\vec X^h(t),\kappa^h_g(t),\vec Y_g^h(t)) 
\in \Vh\times V^h \times \yspace^h$, for $t\in (0,T]$,
be a solution to $(\BGNpwfwf_h)^\diamond$. Then the solution satisfies 
\begin{equation} \label{eq:Qhstab}
\tfrac12\,\ddt
\left((\kappa^h_g)^2 
, |\vec X^h_\rho|_g\right)^\diamond 
+ \left(g(\vec X^h)\,(\vec X^h_t\,.\,\vec\omega^h)^2, 
|\vec X^h_\rho|_g \right)^\diamond 
= 0\,.
\end{equation}
\end{thm}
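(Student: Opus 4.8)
\emph{Proof proposal.} The plan is to differentiate the discrete elastic energy $E^h(t) := \tfrac12\,\bigl((\kappa^h_g)^2, |\vec X^h_\rho|_g\bigr)^\diamond$ in time and to match the resulting terms against the three equations of $(\BGNpwfwf_h)^\diamond$, tested with the natural time-differentiated choices. This mirrors the closed-curve computation underlying \eqref{eq:efweakQ}, the new feature being only the control of boundary contributions through the spaces $\xspace^h$ and $\yspace^h$. First I would test \eqref{eq:B88a} with $\vec\chi = \vec X^h_t$, which is admissible since $\vec X^h_t(t)\in\xspace^h$; the first term on the left-hand side then becomes precisely the sought dissipation $\bigl(g(\vec X^h)\,(\vec X^h_t\,.\,\vec\omega^h)^2, |\vec X^h_\rho|_g\bigr)^\diamond$, and I would record the identity relating it to the cross term $\bigl((\vec Y^h_g)_s\,.\,\vec\nu^h, (\vec X^h_t)_s\,.\,\vec\nu^h\,|\vec X^h_\rho|_g\bigr)^\diamond$ and to the four curvature/metric terms on the right of \eqref{eq:B88a}.

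Next I would expand $\ddt E^h(t) = \bigl((\kappa^h_g)_t, \kappa^h_g\,|\vec X^h_\rho|_g\bigr)^\diamond + \tfrac12\bigl((\kappa^h_g)^2, \partial_t|\vec X^h_\rho|_g\bigr)^\diamond$ and establish the pointwise measure-derivative identity
\[
\partial_t|\vec X^h_\rho|_g = \left[(\vec X^h_t)_s\,.\,\vec\tau^h + \tfrac12\,\vec X^h_t\,.\,\nabla\,\ln g(\vec X^h)\right]|\vec X^h_\rho|_g\,.
\]
With this, the second summand of $\ddt E^h$ coincides exactly with the $-\tfrac12\bigl((\kappa^h_g)^2,\,\cdot\,\bigr)^\diamond$ contribution on the right of \eqref{eq:B88a}, so those two pieces cancel once everything is combined. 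To treat the remaining term $\bigl((\kappa^h_g)_t, \kappa^h_g\,|\vec X^h_\rho|_g\bigr)^\diamond$ and the cross term, I would differentiate the defining equations \eqref{eq:B88b} and \eqref{eq:B88c} in time, holding the test functions fixed — legitimate because $V^h$ and $\yspace^h$ are time-independent — and then substitute $\chi = \kappa^h_g(t)$ and $\vec\eta = \vec Y^h_g(t)$. Here the pointwise chain rule $\ddt\nabla\,\ln g(\vec X^h) = (D^2\,\ln g(\vec X^h))\,\vec X^h_t$ reproduces the $D^2\,\ln g$ term of \eqref{eq:B88a}, while $\ddt\vec\nu^h = -\bigl((\vec X^h_t)_s\,.\,\vec\nu^h\bigr)\vec\tau^h$ generates the remaining geometric terms; the $\vec X^h_{\rho,t}$ contribution from \eqref{eq:B88c} yields the full $\bigl((\vec X^h_t)_s, (\vec Y^h_g)_s\,|\vec X^h_\rho|_g\bigr)^\diamond$, whose normal component cancels the cross term. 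The clamped boundary sum in \eqref{eq:B88c} has vanishing time derivative, since $\vec X^h_t = \vec 0$ on $\partial_C I$ forces $g^{\frac12}(\vec X^h)$ to be constant there.

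Finally I would add the identities from these three steps with the appropriate signs. By the construction of $(\BGNpwfwf_h)^\diamond$ the accumulated curvature/metric contributions cancel term by term — exactly as in the closed-curve derivation — leaving $\ddt E^h(t) + \bigl(g(\vec X^h)\,(\vec X^h_t\,.\,\vec\omega^h)^2, |\vec X^h_\rho|_g\bigr)^\diamond = 0$, which is \eqref{eq:Qhstab}.

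The main obstacle is the bookkeeping in the last two steps: organising the many metric-dependent contributions arising from $\nabla\,\ln g(\vec X^h)$, $D^2\,\ln g(\vec X^h)$, $\partial_t\vec\nu^h$, $\partial_t g^{\frac12}(\vec X^h)$ and $\partial_t|\vec X^h_\rho|$, and checking that they cancel in pairs. Crucially, every quantity is evaluated at the quadrature nodes of $(\cdot,\cdot)^\diamond$ and all manipulations are pointwise applications of the product and chain rules, so the quadrature does not interfere with these cancellations and the interior computation reduces to the one already verified for closed curves. The genuinely new points to confirm are therefore only that $\vec X^h_t\in\xspace^h$ and $\vec Y^h_g,(\vec Y^h_g)_t\in\yspace^h$ annihilate every boundary term produced by the temporal differentiation, which is what makes the open-curve identity close up cleanly.
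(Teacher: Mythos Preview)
Your proposal is correct and follows essentially the same route as the paper: test \eqref{eq:B88a} with $\vec\chi=\vec X^h_t\in\xspace^h$, differentiate \eqref{eq:B88c} in time and test with $\vec\eta=\vec Y^h_g\in\yspace^h$ (using $\vec X^h_t=\vec 0$ on $\partial_C I$ to kill the boundary sum), and combine with \eqref{eq:B88b}. The only cosmetic difference is that the paper tests \eqref{eq:B88b} directly with $\chi=(\kappa^h_g)_t$ rather than differentiating \eqref{eq:B88b} and testing with $\chi=\kappa^h_g$; the two choices are related by a product rule and lead to the same cancellations, and the paper outsources the detailed bookkeeping you spell out to the closed-curve computation in \cite[Theorem~4.4]{hypbolpwf}.
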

\begin{proof}
Similarly to the proof of \cite[Theorem~4.4]{hypbolpwf}, 
on choosing $\vec\chi = \vec X^h_t \in \xspace^h$ in \eqref{eq:B88a} 
we can show that
\begin{align}
& \left(g(\vec X^h)\,(\vec X^h_t\,.\,\vec\omega^h)^2, 
|\vec X^h_\rho|_g \right)^\diamond \nonumber \\ & \quad 
= - \tfrac12 \left( (\kappa^h_g)^2 
, (|\vec X^h_\rho|_g)_t \right)^\diamond
+ \left( \kappa^h_g\,\vec Y^h_g, 
(g^\frac12(\vec X^h)\, \vec\nu^h \,|\vec X^h_\rho|_g)_t \right)^\diamond
\nonumber \\ & \qquad
+ \left( (\vec Y^h_g)_\rho,  (g^\frac12(\vec X^h)\,\vec\tau^h)_t 
\right)^\diamond
+ \tfrac12 \left( \vec Y^h_g, 
((\nabla\,\ln\,g(\vec X^h))\,|\vec X^h_\rho|_g )_t \right)^\diamond .
\label{eq:B5Xt}
\end{align}
Moreover, differentiating \eqref{eq:B88c} with respect to time, 
noting that $\vec X^h_t = 0$ on $\partial_C I$, 
and then choosing $\vec\eta = \vec Y^h_g$, yields that
\begin{align} \label{eq:B3dt}
& 
\left((\kappa_g^h)_t\,\vec Y^h_g, g^\frac12(\vec X^h)\,\vec\nu^h\, 
|\vec X^h_\rho|_g\right)^\diamond
+\left(\kappa_g^h\,\vec Y^h_g, (g^\frac12(\vec X^h)\,\vec\nu^h\, 
|\vec X^h_\rho|_g)_t \right)^\diamond
\nonumber \\ & \qquad
+ \left((\vec Y^h_g)_\rho,(g^\frac12(\vec X^h)\,\vec\tau^h)_t\right)^\diamond
+ \tfrac12 \left(\vec Y^h_g, ((\nabla\,\ln\,g(\vec X^h))\, 
|\vec X^h_\rho|_g)_t \right)^\diamond
= 0 .
\end{align}
Finally, choosing $\chi = (\kappa_g^h)_t$ in \eqref{eq:B88b}, 
and combining with (\ref{eq:B5Xt}) and (\ref{eq:B3dt}),
yields the desired result (\ref{eq:Qhstab}).
\end{proof}

The identity \eqref{eq:Qhstab}, after integration in time, yields
a stability bound for the discrete elastic energy.

\setcounter{equation}{0}
\section{Fully discrete finite element approximations} \label{sec:fd}

Let $0= t_0 < t_1 < \ldots < t_{M-1} < t_M = T$ be a
partitioning of $[0,T]$ into possibly variable time steps 
$\ttau_m = t_{m+1} - t_{m}$, $m=0,\ldots, M-1$. 
For a given $\vec{X}^m\in \Vh$ we let
$\vec\nu^m$ and $\vec\omega^m$ be the fully discrete
analogues to (\ref{eq:tauh}) and \eqref{eq:omegah}, respectively. 

For the implementation of the presented schemes some metric-dependent 
quantities need to be calculated. For the metrics in \eqref{eq:gs} and
\eqref{eq:gAngenent}, \eqref{eq:gcone} we list these expressions for the
convenience of the reader in Table~\ref{tab:g}.
For the metric \eqref{eq:gGNS} all the necessary quantities can be calculated
with the help of the chain rule, using the fact that e.g.\
$(\nabla\,g)(\vec z) = 
U^T\, (\nabla_{\bf u}\,\Psi)({\bf u}_0 + U\,\vec z)$.
\begin{table}
\center
\def\arraystretch{1.25}
\begin{tabular}{|c|c|c|}
\hline
$g$ & 
$\tfrac12\,\nabla\,\ln g(\vec x)$ & $\tfrac12\,D^2\,\ln g(\vec x)$ 
\\ \hline
(\ref{eq:gmu}) & 
$- \frac{\mu}{\vec x\,.\,\vec\ek_1}\,\vec\ek_1$ &
$\frac{\mu}{(\vec x\,.\,\vec\ek_1)^2}\,\vec\ek_1 \otimes \vec\ek_1$
\\
(\ref{eq:galpha}) & 
$\frac{2\,\alpha}{1 - \alpha\,|\vec x|^2}\,\vec x$ & 
$\frac{2\,\alpha}{1 - \alpha\,|\vec x|^2}\,\mat \Id
+ \frac{4\,\alpha^2}{(1 - \alpha\,|\vec x|^2)^2}\,\vec x \otimes \vec x$
\\ 
(\ref{eq:gMercator}) & $-\tanh(\vec x\,.\,\vec\ek_1)\,\vec\ek_1$
& $-\cosh^{-2}(\vec x\,.\,\vec\ek_1)\,\vec\ek_1 \otimes \vec\ek_1$
\\
(\ref{eq:gcatenoid}) & $\tanh(\vec x\,.\,\vec\ek_1)\,\vec\ek_1$
& $\cosh^{-2}(\vec x\,.\,\vec\ek_1)\,\vec\ek_1 \otimes \vec\ek_1$
\\
(\ref{eq:gtorus}) & $-\frac{\sin(\vec x\,.\,\vec\ek_2)}
{[\mathfrak s^2 + 1]^\frac12 - \cos(\vec x\,.\,\vec\ek_2)}\,\vec\ek_2$
& $\frac{1 - [\mathfrak s^2 + 1]^\frac12\,\cos (\vec x\,.\,\vec\ek_2)}
{([\mathfrak s^2 + 1]^\frac12 - \cos (\vec x\,.\,\vec\ek_2))^2}
\,\vec\ek_2 \otimes \vec\ek_2$ \\
(\ref{eq:gAngenent}) & 
$\frac{n-1}{\vec x\,.\,\vec\ek_1}\,\vec\ek_1-\tfrac12\,\vec x$
& $- \frac{n-1}{(\vec x\,.\,\vec\ek_1)^2}
\,\vec\ek_1 \otimes \vec\ek_1 - \tfrac12\,\mat\Id$ \\
\eqref{eq:gcone} & 
$\mathfrak{b}\,\vec\ek_1$ & 
$\mat 0$ 
\\ \hline \hline
& 
$\nabla\,g^\frac12(\vec x)$ & $\nabla\,g^\frac12_-(\vec x)$ \\ \hline
(\ref{eq:gmu}) & $- \frac\mu{(\vec x\,.\,\vec\ek_1)^{\mu+1}}\,\vec\ek_1$ &
$0$ \\
(\ref{eq:galpha}) & 
$\frac{4\,\alpha}{( 1 - \alpha\, |\vec x|^2)^{2}}\,\vec x$ & 
$4\, \min\{0,\alpha\}\,\vec x$ \\ 
(\ref{eq:gMercator}) & 
$-\frac{\tanh(\vec x\,.\,\vec\ek_1)}{\cosh(\vec x\,.\,\vec\ek_1)}\,\vec\ek_1$ & 
$- (\vec x\,.\,\vec\ek_1)\,\vec\ek_1$ 
\\
(\ref{eq:gcatenoid}) & $\sinh(\vec x\,.\,\vec\ek_1)\,\vec\ek_1$ &
$0$ \\
(\ref{eq:gtorus}) & $-\frac{\mathfrak s\,\sin(\vec x\,.\,\vec\ek_2)}
{([\mathfrak s^2 + 1]^\frac12 - \cos(\vec x\,.\,\vec\ek_2))^2}\,\vec\ek_2$ & 
$- \frac{\mathfrak s\,\vec x\,.\,\vec\ek_2}
{([\mathfrak s^2 + 1]^\frac12 - 1)^{2}}\,\vec\ek_2$  \\
(\ref{eq:gAngenent}) & 
\makecell{
$e^{-\frac14\, |\vec z|^2}\bigl[
(\vec x\,.\,\vec\ek_1)^{n-2}
\left(n-1-\tfrac12\,(\vec x\,.\,\vec\ek_1)^2\right)\vec\ek_1$ \\
$-\tfrac12\,(\vec x\,.\,\vec\ek_1)^{n-1}\,(\vec x\,.\,\vec\ek_2)\,\vec\ek_2\bigr]$
} &
$- 1.29\,\vec x$ \ ($n=2$) \\
\eqref{eq:gcone} & 
$\frac{\mathfrak{b}^2}{[1 - \mathfrak{b}^2]^\frac12}\,
e^{\mathfrak{b}\,\vec z\,.\,\vec\ek_1}\,\vec\ek_1$ &
0 \\ \hline
\end{tabular}
\caption{Expressions for terms that are relevant for the implementation of the
presented finite element approximations. 
Observe that 
$\nabla\,g^\frac12 = g^\frac12\,(\tfrac12\,\nabla\,\ln g)$ and
$\nabla\,g^\frac12_+ = \nabla\,g^\frac12 - \nabla\,g^\frac12_-$.}
\label{tab:g}
\end{table}%

\subsection{Curvature flow}

We consider the following linear fully discrete analogue of 
$(\BGNmckappa_h)^h$.

\noindent
$(\BGNmckappa_m)^h$:
Let $\vec X^0 \in \Vhpartialzero$. For $m=0,\ldots,M-1$, 
find $(\vec X^{m+1}, \kappa^{m+1}) \in \Vh \times V^h$,
with $\vec X^{m+1} - \vec X^m \in \xspace^h$, such that
\begin{subequations} \label{eq:fd}
\begin{align} \label{eq:fda}
& \left(g(\vec X^m)\,
\frac{\vec X^{m+1} - \vec X^m}{\ttau_m}, \chi\,\vec\omega^m\,|\vec
X^m_\rho|\right)^h
= \left(\mathcal K(\kappa^{m+1},\vec\omega^m,\vec X^m) , 
\chi\,|\vec X^m_\rho|\right)^h 
\quad \forall\ \chi \in V^h\,, \\ &
\left(\kappa^{m+1}\,\vec\omega^m, \vec\eta\,|\vec X^m_\rho|\right)^h
+ \left(\vec X^{m+1}_\rho, \vec\eta_\rho\,|\vec X^m_\rho|^{-1}\right) 
= 0 \quad \forall\ \vec\eta \in \xspace^h\,.
\label{eq:fdb}
\end{align}
\end{subequations}
Note that the scheme $(\BGNmckappa_m)^h$ is a natural generalisation of the
scheme \cite[$(\BGNmckappa_m)^h$]{hypbol} to the case of open curves.
The scheme $(\BGNmckappa_m)^h$ has the advantage that it is linear, 
recall \eqref{eq:mathcalK}, and that
it asymptotically inherits the equidistribution property
from $(\BGNmckappa_h)^h$, \eqref{eq:sd}. 
Observe that we have chosen the explicit and implicit terms in \eqref{eq:fd} 
such that we obtain a linear scheme for which existence and uniqueness can be
shown. The basic structure of the scheme goes back to
\cite[(2.3)]{triplejMC}. In fact, for a closed curve in the Euclidean plane
the scheme $(\BGNmckappa_m)^h$ collapses to that approximation of
curvature flow.

We make the following mild assumption.
\begin{tabbing}
$(\mathfrak A)^{h}$\quad \=
Let $|\vec{X}^m_\rho| > 0$ for almost all $\rho\in I$, and let
$\dim \spa \left\{ \vec\omega^m(q_j) : 
q_j \in \overline I\setminus \partial_0 I \right\} = 2$.
\end{tabbing}

\begin{lem} \label{lem:ex}
Let the assumption $(\mathfrak A)^h$ hold.
Then there exists a unique solution 
$(\vec X^{m+1},$ $ \kappa^{m+1}) \in \Vh \times V^h$ to 
$(\BGNmckappa_m)^h$.
\end{lem}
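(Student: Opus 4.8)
The plan is to exploit that $(\BGNmckappa_m)^h$ is a \emph{square linear} system on the finite-dimensional space $\xspace^h \times V^h$: the unknown increment $\vec X^{m+1} - \vec X^m$ ranges over $\xspace^h$ and $\kappa^{m+1}$ over $V^h$, while the test functions $\vec\eta$ and $\chi$ range over the same two spaces, so the number of equations matches the number of unknowns (the coefficients $g(\vec X^m)$, $\vec\omega^m$, $|\vec X^m_\rho|$ are all frozen at the known $\vec X^m$, which is why the scheme is linear). Hence existence is equivalent to uniqueness, and it suffices to show that the homogeneous problem has only the trivial solution. First I would take two solutions and subtract them, writing $\vec U \in \xspace^h$ for the difference of the two increments and $\kappa \in V^h$ for the difference of the curvatures. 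Since $\mathcal K(\cdot,\vec\omega^m,\vec X^m)$ is affine in its first argument, recall \eqref{eq:mathcalK}, the constant contributions cancel and $(\vec U,\kappa)$ satisfies the homogeneous analogues of \eqref{eq:fda}--\eqref{eq:fdb}, in which \eqref{eq:fda} pairs $\ttau_m^{-1}\,g(\vec X^m)\,\vec U$ against the linear part $\mathcal K_0$ of $\mathcal K$, whose nodal values equal $\kappa(q_j)$ for $q_j \in \overline I \setminus \partial_0 I$ and $(1-\mu)\,\kappa(q_j)$ or $n\,\kappa(q_j)$ for $q_j \in \partial_0 I$.

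The key step is to read the homogeneous \eqref{eq:fda} nodewise. As it holds for all $\chi \in V^h$ and both sides use the mass-lumped product, positivity of the lumped weights gives the pointwise identity $\ttau_m^{-1}\,g(\vec X^m(q_j))\,(\vec U \cdot \vec\omega^m)(q_j) = \mathcal K_0(\kappa)(q_j)$ at every node. At the nodes of $\partial_0 I$ one has $g(\vec X^m(q_j)) = 0$, so this forces $(1-\mu)\,\kappa(q_j) = 0$ or $n\,\kappa(q_j) = 0$; since $1-\mu \geq 2$ (as $\mu \leq -1$) and $n \geq 2$ are nonzero, I conclude $\kappa(q_j) = 0$ on $\partial_0 I$. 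Next I would test the homogeneous \eqref{eq:fdb} with $\vec\eta = \vec U$ and substitute the nodewise identity: at interior nodes $g(\vec X^m(q_j)) > 0$ allows me to replace $(\vec U \cdot \vec\omega^m)(q_j)$ by $\ttau_m\,g^{-1}(\vec X^m(q_j))\,\kappa(q_j)$, while the $\partial_0 I$ contributions drop out because $\kappa$ vanishes there. This turns \eqref{eq:fdb} into a sum of manifestly nonnegative terms equal to zero, namely a positively weighted sum of $\kappa(q_j)^2$ over the interior nodes plus $\left(\vec U_\rho,\vec U_\rho\,|\vec X^m_\rho|^{-1}\right)$, whence $\kappa \equiv 0$ and $\vec U_\rho = 0$.

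Finally, $\vec U_\rho = 0$ together with $|\vec X^m_\rho| > 0$ a.e. (part of $(\mathfrak A)^h$) and the continuity of $\vec U$ forces $\vec U$ to be a constant vector $\vec c$. Returning to the nodewise form of \eqref{eq:fda} with $\kappa \equiv 0$ yields $\vec c \cdot \vec\omega^m(q_j) = 0$ for every $q_j \in \overline I \setminus \partial_0 I$, and the rank condition $\dim \spa\{\vec\omega^m(q_j) : q_j \in \overline I \setminus \partial_0 I\} = 2$ in $(\mathfrak A)^h$ then gives $\vec c = \vec 0$. Thus $\vec U = \vec 0$ and $\kappa \equiv 0$, which establishes uniqueness and hence existence. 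The main obstacle I anticipate is the interplay of the metric weight $g$ with the degeneracy $g = 0$ on $\partial_0 I$: the weighting by $g(\vec X^m)$ in \eqref{eq:fda} blocks the naive cancellation available in the Euclidean case, and it is precisely the special nodal definition of $\mathcal K$ on $\partial_0 I$ in \eqref{eq:mathcalK}, designed to compensate the vanishing of $g$ via L'Hospital's rule, that must be used to close the argument at those nodes.
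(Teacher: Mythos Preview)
Your proposal is correct and follows essentially the same route as the paper's proof: reduce to the homogeneous system by linearity, deduce $\kappa=0$ on $\partial_0 I$ from $g(\vec X^m)=0$ there together with the nonzero nodal factor in $\mathcal K$, combine the two equations to obtain a sum of nonnegative terms (forcing $\kappa\equiv0$ and $\vec U_\rho=0$), and finish with the rank assumption in $(\mathfrak A)^h$. The only cosmetic difference is that the paper packages the linear part of $\mathcal K$ as $\lambda\,\kappa$ with $\lambda>0$ and tests \eqref{eq:fda} with the auxiliary function $\hat\kappa(q_j)=g^{-1}(\vec X^m(q_j))\,\kappa(q_j)$, whereas you read off the nodewise identity from the lumped product and substitute it directly into \eqref{eq:fdb}; these are equivalent manipulations.
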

\begin{proof}
As (\ref{eq:fda}), (\ref{eq:fdb}) is linear, recall \eqref{eq:mathcalK},
existence follows from uniqueness. 
To investigate the latter, we consider the system: 
Find $(\delta\vec X,\kappa) \in \xspace^h \times V^h$ such that
\begin{subequations}
\begin{align}
\left(g(\vec X^m)\,\frac{\delta\vec X}{\ttau_m}, \chi\,\vec\omega^m\,|\vec
X^m_\rho|\right)^h
= \left(\lambda\,\kappa , \chi\,|\vec X^m_\rho|\right)^h 
\quad \forall\ \chi \in V^h\,, \label{eq:proofa}\\
\left(\kappa\,\vec\omega^m, \vec\eta\,|\vec X^m_\rho|\right)^h
+ \left(\delta\vec X_\rho, \vec\eta_\rho\,|\vec X^m_\rho|^{-1}\right) = 0 
\quad \forall\ \vec\eta \in \xspace^h\,,
\label{eq:proofb}
\end{align}
\end{subequations}
where we recall from (\ref{eq:mathcalK}) that $\lambda \in V^h$ with
$\lambda > 0$ in $\overline I$. It immediately follows from
\eqref{eq:proofa} that $\kappa = 0$ on $\partial_0 I$, and so $\kappa \in
W^h_{\partial_0}$.
Choosing $\vec\eta = \delta\vec X \in \xspace^h$ in 
(\ref{eq:proofb}) and
$\chi=\hat \kappa \in W^h_{\partial_0}$ in (\ref{eq:proofa}), with
$\hat \kappa(q_j) = g^{-1}(\vec X^m(q_j))\,\kappa(q_j)$ for $q_j \in
\overline I \setminus \partial_0 I$,
yields that
\begin{align} \label{eq:unique0}
0& =
\left(|\delta\vec X_\rho|^2, |\vec X^m_\rho|^{-1}\right)
+\ttau_m \left(\lambda\,\kappa, \hat \kappa\, |\vec X^m_\rho|\right)^h 
\nonumber \\ & 
= \left(|\delta\vec X_\rho|^2, |\vec X^m_\rho|^{-1}\right)
+\ttau_m \left(\lambda\,g(\vec X^m)\, |\hat \kappa|^2,|\vec X^m_\rho|\right)^h 
.
\end{align}
It follows from (\ref{eq:unique0}) that $\kappa = \hat \kappa = 0$ and that
$\delta\vec X$ is constant. 
Hence \eqref{eq:proofa} and \eqref{eq:ip0} imply that
\begin{equation}
\left(g(\vec X^m)\,\delta\vec X, \chi\,\vec\omega^m\,
|\vec X^m_\rho|\right)^h = 0
 \quad \forall\ \chi \in V^h \,. \label{eq:unique1}
\end{equation}
It follows from (\ref{eq:unique1}) and
assumption $(\mathfrak A)^h$ that $\delta\vec X=\vec0$.
Hence we have shown that $(\BGNmckappa_m)^h$ has a unique solution
$(\vec X^{m+1},\kappa^{m+1}) \in \Vh\times V^h$.
\end{proof}

In order to present an unconditionally stable fully discrete approximation
of $(\BGNmc_{h})^{h}$, we assume that we can split $g^\frac12$ into
\begin{equation} \label{eq:gsplit}
g^\frac12 = g^\frac12_+ + g^\frac12_-
\quad\text{ such that $\,\pm g^\frac12_\pm\,$ is convex in $H$.}
\end{equation}
It follows from the splitting in (\ref{eq:gsplit}) that
\begin{equation} \label{eq:gsplitstab}
\nabla\,[g^\frac12_+(\vec u) + g^\frac12_-(\vec v)]\,.\,(\vec u - \vec v) \geq
g^\frac12(\vec u) - g^\frac12(\vec v) \quad \forall\ \vec u, \vec v \in H\,.
\end{equation}
Then we introduce the following nonlinear scheme.

\noindent
$(\BGNmc_{m,\star})^{h}$:
Let $\vec X^0 \in \Vhpartialzero$. For $m=0,\ldots,M-1$, 
find $(\vec X^{m+1}, \kappa_g^{m+1}) \in \Vh \times W^h_{\partial_0}$,
with $\vec X^{m+1} - \vec X^m \in \xspace^h$, such that
\begin{subequations} 
\begin{align} \label{eq:fdnewa}
& \left(g(\vec X^m)\,
\frac{\vec X^{m+1} - \vec X^m}{\ttau_m}, \chi\,\vec\nu^m\,
|\vec X^m_\rho|\right)^{h}
= \left(g^\frac12(\vec X^m)\,\kappa_g^{m+1},
\chi\,|\vec X^m_\rho|\right)^{h} \quad \forall\ \chi \in W^h_{\partial_0}\,,\\
& \left(g(\vec X^m)\,\kappa_g^{m+1}\,\vec\nu^m,
\vec\eta\,|\vec X^m_\rho|\right)^{h}
+ \left( \nabla\,[g^\frac12_+(\vec X^{m+1}) + g^\frac12_-(\vec X^{m})],
\vec\eta \, |\vec X^{m+1}_\rho| \right)^{h}
\nonumber \\ & \hspace{5cm}
+ \left( g^\frac12(\vec X^{m})\,
\vec X^{m+1}_\rho,\vec\eta_\rho\, |\vec X^m_\rho|^{-1} \right)^{h}
= 0 \quad \forall\ \vec\eta \in \xspace^h\,.
\label{eq:fdnonlinearb}
\end{align}
\end{subequations}
Note that the scheme $(\BGNmc_{m,\star})^h$ is a natural generalisation of the
scheme \cite[$(\BGNmc_{m,\star})^h$]{hypbol} to the case of open curves.

We can prove the following fully discrete analogue of 
Theorem~\ref{thm:cfstab}.

\begin{thm} 
Let $(\vec X^{m+1},\kappa_g^{m+1})$ be a solution to 
$(\BGNmc_{m,\star})^{h}$. Then it holds that
\begin{equation} \label{eq:stab}
L_g^{h}(\vec X^{m+1}) + \ttau_m
\left(g^\frac12(\vec X^m)\,|\kappa_g^{m+1}|^2,
|\vec X^m_\rho|\right)^{h} \leq L_g^{h}(\vec X^m)\,.
\end{equation}
\end{thm}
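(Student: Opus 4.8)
The plan is to mirror the semidiscrete argument of Theorem~\ref{thm:cfstab}, replacing the test functions $\chi=\kappa_g^h$ and $\vec\eta=\vec X^h_t$ by their fully discrete analogues, and then to absorb the two nonlinearities — the position-dependent weight $g^\frac12$ and the arclength factor $|\vec X_\rho|$ — by the convexity splitting \eqref{eq:gsplitstab} and an elementary scalar inequality, respectively. The time-derivative identity $\ddt L_g^h$ of the continuous-in-time proof is replaced by a telescoping difference $L_g^h(\vec X^{m+1})-L_g^h(\vec X^m)$, which is where the implicit/explicit treatment in \eqref{eq:fdnonlinearb} is exploited.

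First I would test \eqref{eq:fdnewa} with $\chi=\kappa_g^{m+1}\in\Whpartialzero$, which produces $\bigl(g^\frac12(\vec X^m)\,|\kappa_g^{m+1}|^2,|\vec X^m_\rho|\bigr)^h$ on the right-hand side, and test \eqref{eq:fdnonlinearb} with $\vec\eta=\vec X^{m+1}-\vec X^m\in\xspace^h$. By symmetry of the Euclidean dot product, the curvature term in \eqref{eq:fdnonlinearb} coincides with $\ttau_m$ times the right-hand side of the tested \eqref{eq:fdnewa}. Substituting this in leaves the identity
\begin{align*}
& \ttau_m\,\bigl(g^\frac12(\vec X^m)\,|\kappa_g^{m+1}|^2, |\vec X^m_\rho|\bigr)^h
+ \bigl( \nabla[g^\frac12_+(\vec X^{m+1}) + g^\frac12_-(\vec X^m)], (\vec X^{m+1}-\vec X^m)\,|\vec X^{m+1}_\rho| \bigr)^h \\ & \qquad
+ \bigl( g^\frac12(\vec X^m)\,\vec X^{m+1}_\rho, (\vec X^{m+1}-\vec X^m)_\rho\,|\vec X^m_\rho|^{-1} \bigr)^h = 0\,,
\end{align*}
so that it suffices to bound the last two terms from below by $L_g^h(\vec X^{m+1})-L_g^h(\vec X^m)$, recall \eqref{eq:Lgh}.

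For the second term I would apply \eqref{eq:gsplitstab} pointwise with $\vec u=\vec X^{m+1}$, $\vec v=\vec X^m$; since $(\cdot,\cdot)^h$ is a positive linear combination of nodal evaluations, this gives the lower bound $\bigl(g^\frac12(\vec X^{m+1})-g^\frac12(\vec X^m),|\vec X^{m+1}_\rho|\bigr)^h$. For the third term I would write $(\vec X^{m+1}-\vec X^m)_\rho=\vec X^{m+1}_\rho-\vec X^m_\rho$, use Cauchy--Schwarz $\vec X^{m+1}_\rho\,.\,\vec X^m_\rho\le|\vec X^{m+1}_\rho|\,|\vec X^m_\rho|$, and then the scalar inequality $\tfrac{a^2}{b}-a\ge a-b$ (valid for $a,b>0$, the difference being $(a-b)^2/b$) with $a=|\vec X^{m+1}_\rho|$, $b=|\vec X^m_\rho|$, yielding the lower bound $\bigl(g^\frac12(\vec X^m),|\vec X^{m+1}_\rho|\bigr)^h-L_g^h(\vec X^m)$. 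Adding the two bounds, the common term $\bigl(g^\frac12(\vec X^m),|\vec X^{m+1}_\rho|\bigr)^h$ cancels and I obtain exactly $L_g^h(\vec X^{m+1})-L_g^h(\vec X^m)$; combined with the displayed identity this gives \eqref{eq:stab}.

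The main obstacle I expect is getting the two distinct nonlinearities to cooperate in a single energy inequality. The splitting $g^\frac12_\pm$ is engineered so that \eqref{eq:gsplitstab} delivers $g^\frac12(\vec X^{m+1})$ paired against the \emph{new} arclength $|\vec X^{m+1}_\rho|$, while the $|\vec X^m_\rho|^{-1}$ weighting in the tangential term is precisely what makes $\tfrac{a^2}{b}-a\ge a-b$ supply the complementary piece $\bigl(g^\frac12(\vec X^m),|\vec X^{m+1}_\rho|\bigr)^h-L_g^h(\vec X^m)$ with the correct sign. Verifying that these two estimates share the same cross term so that it telescopes is the crux of the argument; once that is in place, everything else is routine substitution.
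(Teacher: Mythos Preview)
Your proof is correct and follows essentially the same approach as the paper: test \eqref{eq:fdnewa} with $\kappa_g^{m+1}$ and \eqref{eq:fdnonlinearb} with $\vec X^{m+1}-\vec X^m$, then bound the two nonlinear terms via \eqref{eq:gsplitstab} and an elementary inequality. The only cosmetic difference is that the paper packages your Cauchy--Schwarz step and the scalar inequality $a^2/b-a\geq a-b$ into the single vector inequality $\vec a\,.\,(\vec a-\vec b)\geq|\vec b|\,(|\vec a|-|\vec b|)$, applied with $\vec a=\vec X^{m+1}_\rho$, $\vec b=\vec X^m_\rho$.
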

\begin{proof}
Choosing $\chi = \ttau_m\,\kappa_g^{m+1}\in  W^h_{\partial_0}$ in 
(\ref{eq:fdnewa}) and
$\vec\eta = \vec X^{m+1} - \vec X^m \in \xspace^h$ in (\ref{eq:fdnonlinearb}) 
yields that
\begin{align*}
& - \ttau_m \left(g^\frac12(\vec X^m)\,|\kappa_g^{m+1}|^2,
|\vec X^m_\rho|\right)^{h}
= \left( \nabla\,[g^\frac12_+(\vec X^{m+1}) + g^\frac12_-(\vec X^{m})],
(\vec X^{m+1} - \vec X^m) \, |\vec X^{m+1}_\rho| \right)^{h}
\nonumber \\ & \qquad
+ \left( g^\frac12(\vec X^{m})\,
\vec X^{m+1}_\rho,(\vec X^{m+1}_\rho - \vec X^m_\rho)\, 
|\vec X^m_\rho|^{-1} \right)^{h}
\nonumber \\ & 
\geq \left( g^\frac12(\vec X^{m+1}) - g^\frac12(\vec X^{m}), 
|\vec X^{m+1}_\rho| \right)^{h}
+ \left( g^\frac12(\vec X^{m}), |\vec X^{m+1}_\rho| - |\vec X^m_\rho|
\right)^{h}
\nonumber \\ &
= \left( g^\frac12(\vec X^{m+1})\,|\vec X^{m+1}_\rho| - 
g^\frac12(\vec X^{m})\,|\vec X^m_\rho|, 1 \right)^{h} 
= L_g^{h}(\vec X^{m+1}) - L_g^{h}(\vec X^{m})\,, 
\end{align*}
where we have used (\ref{eq:gsplitstab}) and the inequality
$\vec a\,.\,(\vec a - \vec b) \geq |\vec b|\,(|\vec a| - |\vec b|)$
for $\vec a$, $\vec b \in \bR^2$.
\end{proof}

Splittings of the form \eqref{eq:gsplit} for the metrics \eqref{eq:gs} 
have been derived in \cite{hypbol}, and we repeat them for the benefit of the
reader in Table~\ref{tab:g}. In the same table we also list, where possible,
such splittings for the metrics \eqref{eq:gnew}.
In particular, for the metric \eqref{eq:gcone} we note that
$D^2\,g^\frac12(\vec z) = 
\frac{\mathfrak{b}^3}{[1 - \mathfrak{b}^2]^\frac12}\,
e^{\mathfrak{b}\,\vec z\,.\,\vec\ek_1}\,\vec\ek_1\otimes\vec\ek_1$
is clearly positive semidefinite, and so we can choose 
$g^\frac12_+ = g^\frac12$ and $g^\frac12_- = 0$.
Moreover, we now demonstrate how to obtain a splitting of the form
\eqref{eq:gsplit} for the metric
\eqref{eq:gAngenent} in the case $n=2$. We leave the case $n\geq3$ to the
reader. If $n=2$, then we note that
\[
D^2\,g^\frac12(\vec z) = \tfrac12\,
e^{-\frac14\, |\vec z|^2} 
\left[ \tfrac12\,(\vec z\,.\,\vec\ek_1)\,\vec z \otimes \vec z -
\begin{pmatrix}
 3\,\vec z\,.\,\vec\ek_1 & \vec z\,.\,\vec\ek_2 \\
 \vec z\,.\,\vec\ek_2 & \vec z\,.\,\vec\ek_1
\end{pmatrix} 
\right],
\]
and we observe that the eigenvalues of 
$\begin{pmatrix}
3\,\vec z\,.\,\vec\ek_1 & \vec z\,.\,\vec\ek_2 \\
\vec z\,.\,\vec\ek_2 & \vec z\,.\,\vec\ek_1
\end{pmatrix}$ 
are $2\,\vec z\,.\,\vec\ek_1 \pm |\vec z|$. Moreover, the function
$\mathcal{F}(\vec z) = \tfrac12\,e^{-\frac14\, |\vec z|^2} \,
(2\,\vec z\,.\,\vec\ek_1 + |\vec z|)$ attains its maximum at 
$\vec z = \sqrt{2}\,\vec\ek_1$ with 
$\max_{\vec z \in \bR^2} \mathcal{F}(\vec z) 
= \frac{3}{\sqrt{2\,e}} \approx 1.2866$.
Hence the matrix
\[
D^2\,g^\frac12(\vec z) + R\,\mat\Id\,,\quad \text{where } R=1.29\,,
\]
is positive definite for all $\vec z \in H$, and so we can choose
\[
g^\frac12_+(\vec z) = g^\frac12(\vec z) + \tfrac12\,R\,|\vec z|^2
\quad\text{and}\quad
g^\frac12_-(\vec z) = -\tfrac12\,R\,|\vec z|^2
\quad \text{if }\ n = 2\,.
\]

\subsection{Elastic flow}

We consider the following linear fully discrete analogue of the scheme
$(\BGNpwfwf_h)^\diamond$, \eqref{eq:B88}. 

\noindent
$(\BGNpwfwf_m)^\diamond$:
Let $(\vec X^0,\kappa^0_g,\vec Y^0_g) \in \Vhpartialzero\times V^h\times \Vh$. 
For $m=0,\ldots,M-1$, 
find $(\vec X^{m+1}, \kappa^{m+1}_g,$ $ \vec Y^{m+1}_g) 
\in \Vh \times V^h \times \yspace^h$,
with $\vec X^{m+1} - \vec X^m \in \xspace^h$, such that
\begin{subequations} \label{eq:B8}
\begin{align} 
& \left(g(\vec X^m)\,\frac{\vec X^{m+1} - \vec X^m}{\ttau_m}\,.\,
\vec\omega^m, \vec\chi\,.\,\vec\omega^m\,
|\vec X^m_\rho|_g \right)^\diamond
- \left((\vec Y^{m+1}_g)_s, \vec\chi_s\,|\vec X^m_\rho|_g \right)^\diamond
\nonumber \\ & \qquad
+ \left((\vec Y^m_g)_s\,.\,\vec\tau^m, \vec\chi_s\,.\,\vec\tau^m
\,|\vec X^m_\rho|_g \right)^\diamond
\nonumber \\ & 
 = -\tfrac12 \left( (\kappa^m_g\,)^2 
- \vec Y^m_g\,.\,\nabla\,\ln\,g(\vec X^m), \left[\vec\chi_s\,.\,\vec\tau^m + 
\tfrac12\,\vec\chi\,.\,\nabla\,\ln\,g(\vec X^m)\right] |\vec X^m_\rho|_g
 \right)^\diamond \nonumber \\ & \qquad
+ \tfrac12 \left((D^2\,\ln\,g(\vec X^m))\,\vec Y^m_g, \vec\chi\,
|\vec X^m_\rho|_g \right)^\diamond \nonumber \\ & \qquad
+  \left( g^\frac12(\vec X^m)\,\kappa^m_g\,\vec Y^m_g\,.\,\vec\nu^m
+ \tfrac12\,(\vec Y^m_g)_s\,.\,\vec\tau^m, \vec\chi\,.\,
(\nabla\,\ln\,g(\vec X^m))\,
|\vec X^m_\rho|_g \right)^\diamond \nonumber \\ & \qquad
+ \left( g^\frac12(\vec X^m)\,\kappa^m_g,
\vec\chi_s\,.\,(\vec Y^m_g)^\perp\,|\vec X^m_\rho|_g \right)^\diamond
\quad \forall\ \vec\chi \in \xspace^h\,, \label{eq:B8a} \\
&\left(\kappa^{m+1}_g - g^\frac12(\vec X^m)\,\vec Y^{m+1}_g\,.\,\vec\nu^m, 
\chi\,|\vec X^m_\rho|_g \right)^\diamond = 0
\quad \forall\ \chi \in V^h\,, \label{eq:B8b} \\
&
\left(g^\frac12(\vec X^m)\,\kappa_g^{m+1}\,\vec\nu^m, 
\vec\eta\,|\vec X^m_\rho|_g \right)^\diamond
+ \left(\vec X^{m+1}_s,\vec\eta_s\,|\vec X^m_\rho|_g\right)^\diamond
+ \tfrac12 \left( \nabla\,\ln\,g(\vec X^m), \vec\eta\,|\vec
X^m_\rho|_g\right)^\diamond 
\nonumber \\ & \quad
= \sum_{p \in \partial_C I} [g^{\frac12}(\vec X^m)\,\vec\zeta\,.\,\vec\eta](p)
\quad \forall\ \vec\eta \in \yspace^h\,. \label{eq:B8c}
\end{align}
\end{subequations}
Note that the scheme $(\BGNpwfwf_{m})^\diamond$ is a natural generalisation of 
the scheme \cite[$(\BGNpwfwf_{m})^\diamond$]{hypbolpwf} 
to the case of open curves.
Observe that the second term in \eqref{eq:B88a} is approximated by
the last two terms on the left hand side of \eqref{eq:B8a}. This is done in
order to allow for an existence and uniqueness proof, see Lemma~\ref{lem:exg}
below. In particular, the spatial differential operators acting on
$\vec Y^{m+1}_g$ and $\vec X^{m+1}$ in \eqref{eq:B8a} and \eqref{eq:B8c},
respectively, are now the same. This technique is in line with the authors'
earlier work in e.g.\ \cite{pwf,pwftj,hypbolpwf,axipwf}. 

We make the following mild assumptions.
\begin{tabbing}
$(\mathfrak B)^\diamond$\quad \=
Let $|\vec{X}^m_\rho| > 0$ for almost all $\rho\in I$, and let
$\dim \spa \mathcal Z^\diamond = 2$, where \\ \> $\mathcal Z^\diamond = 
\left\{ \left( g^\frac12(\vec X^m)\,\vec\nu^m, 
\chi\, |\vec X^m_\rho|_g \right)^\diamond : \chi \in V^h \right \} 
\subset \bR^2$.
\end{tabbing}
In the case $(\cdot,\cdot)^\diamond = (\cdot,\cdot)^h$ the above assumption
collapses to $(\mathfrak A)^h$. When dealing with clamped boundary conditions,
we also need the following assumption, which is similar
to \cite[Assumption~5.9]{axipwf}.

\begin{tabbing} 
$(\mathfrak C)^\diamond$\quad \=
If $\vec Z \in \yspace^h$ with
$( \vec Z_s, \vec\chi_s \,|\vec X^m_\rho|_g )^\diamond = 0$
for all $\vec\chi \in \xspace^h$ and \\ \>
$( g^\frac12(\vec X^m)\,\vec Z, \chi\,\vec\nu^m \,|\vec X^m_\rho|_g
)^\diamond = 0$ for all $\chi \in V^h$, 
then $\vec Z = \vec 0$.
\end{tabbing}

\begin{lem} \label{lem:exg}
Let the assumptions $(\mathfrak A)^h$ and 
$(\mathfrak B)^\diamond$ hold.
Moreover, if $\partial_C I \not= \emptyset$ then let
assumptions $(\mathfrak C)^\diamond$ hold.
Then there exists a unique solution
$(\vec X^{m+1}, \kappa^{m+1}_g, \vec Y^{m+1}_g) \in \Vh \times V^h \times 
\yspace^h$ to $(\BGNpwfwf_m)^\diamond$, if the quadrature rule
\eqref{eq:Idiamond} has at least one interior sampling point, 
$\alpha_k\in(0,1)$.
If $(\cdot,\cdot)^\diamond = (\cdot,\cdot)^h$, on the other hand, 
then there exists a solution 
that can be made
unique by requiring that $\kappa^{m+1}_g \in W^h_{\partial_0}$.
\end{lem}
\begin{proof}
We first consider the case that \eqref{eq:Idiamond} is such that
$\alpha_k \in(0,1)$ for some $1\leq k\leq K$.
As (\ref{eq:B8}) is linear, existence follows from uniqueness. 
To investigate the latter, we consider the system: 
Find $(\delta\vec X,\kappa_g, \vec Y_g) \in \xspace^h \times V^h \times 
\yspace^h$ such that
\begin{subequations}
\begin{align} 
\left(g(\vec X^m)\,\delta\vec X\,.\,\vec\omega^m, \vec\chi\,.\,\vec\omega^m\,
|\vec X^m_\rho|_g \right)^\diamond
- \ttau_m \left((\vec Y_g)_s, \vec\chi_s\,|\vec X^m_\rho|_g \right)^\diamond
& = 0 \quad \forall\ \vec\chi \in \xspace^h\,, \label{eq:proofga} \\
\left(\kappa_g - g^\frac12(\vec X^m)\,\vec Y_g\,.\,\vec\nu^m, 
\chi\,|\vec X^m_\rho|_g \right)^\diamond & = 0
\quad \forall\ \chi \in V^h\,, \label{eq:proofgb} \\
\left(g^\frac12(\vec X^m)\,\kappa_g\,\vec\nu^m, 
\vec\eta\,|\vec X^m_\rho|_g \right)^\diamond
+ \left((\delta\vec X)_s,\vec\eta_s\,|\vec X^m_\rho|_g\right)^\diamond
& = 0
\quad \forall\ \vec\eta \in \yspace^h\,. \label{eq:proofgc}
\end{align}
\end{subequations}
Choosing $\vec\chi = \delta\vec X\in\xspace^h$ in (\ref{eq:proofga}),
$\chi = \kappa_g$ in (\ref{eq:proofgb}) and $\vec\eta = \vec Y_g \in \yspace^h$
in (\ref{eq:proofgc}) yields that 
\begin{equation} \label{eq:proofgsum}
\left(g(\vec X^m)\,(\delta\vec X\,.\,\vec\omega^m)^2, 
|\vec X^m_\rho|_g \right)^\diamond
+ \ttau_m \left((\kappa_g)^2,|\vec X^m_\rho|_g \right)^\diamond = 0\,.
\end{equation}
First of all it follows from \eqref{eq:proofgsum}, 
our assumption on (\ref{eq:Idiamond}) 
and the positivities of $g(\vec X^m)$ and $|\vec X^m_\rho|_g$ 
on $\overline I\setminus \partial_0 I$, that
$\kappa_g = 0 \in V^h$.
As a consequence, we obtain by choosing $\vec\eta = \delta\vec X
\in \xspace^h \subset \yspace^h$ in (\ref{eq:proofgc}) that
$\delta\vec X$ is a constant vector. Now \eqref{eq:proofgsum} implies 
that this constant is such that
$\delta\vec X\,.\,\vec\omega^m(q_j) = 0$ for
all $q_j \in \overline I\setminus \partial_0 I$, and so
the assumption $(\mathfrak A)^h$ yields that $\delta\vec X = \vec 0$.

It remains to show that $\vec Y_g = \vec 0$.
If $\partial_C I = \emptyset$, then
we can choose $\vec\chi = \vec Y_g \in \yspace^h \subset 
\xspace^h$ in \eqref{eq:proofga}
to obtain that $\vec Y_g$ is constant in $\overline I$.
Combining \eqref{eq:proofgb} with assumption $(\mathfrak B)^\diamond$
then gives that $\vec Y_g = \vec 0$. 
If $\partial_C I \not= \emptyset$, on the other hand,
then assumption $(\mathfrak C)^\diamond$ directly gives that 
$\vec Y_g=\vec 0$, in view of \eqref{eq:proofga} and \eqref{eq:proofgb}. 
Hence there exists a unique solution
$(\vec X^{m+1}, \kappa^{m+1}_g, \vec Y^{m+1}_g) \in \Vh \times V^h \times
\yspace^h$ to $(\BGNpwfwf_m)^\diamond$.

For the case $(\cdot,\cdot)^\diamond = (\cdot,\cdot)^h$ we 
note that $V^h$ in \eqref{eq:B8b} can be equivalently replaced by 
$W^h_{\partial_0}$. Existence of a unique 
$(\vec X^{m+1}, \kappa^{m+1}_g, \vec Y^{m+1}_g) \in \Vh \times W^h_{\partial_0}
\times\yspace^h$ 
to this new system can then be shown similarly to the above proof,
which gives all the remaining desired results.
\end{proof}

\begin{rem} \label{rem:homotopic}
We note that in the examples \eqref{eq:gcatenoid}, \eqref{eq:gtorus} and
\eqref{eq:gcone}, any closed curve 
$\vec x(I)$ in $H$ will correspond to a curve $\vec\Phi(\vec x(I))$ on the
hypersurface $\mathcal{M}$ that is homotopic to a point. 
In order to model other curves, the domain $H$ needs to be embedded in an
algebraic structure different to $\bR^2$. In particular, 
$H = \bR\times \RpiZ$ for \eqref{eq:gcatenoid} and \eqref{eq:gcone},
and $H = \RpisZ \times \RpiZ$ for \eqref{eq:gtorus}, respectively. 

For the implementation of the presented schemes, this only affects the
calculation of differences of vectors in $H$. For example, for each interval
$\vec X^m(I_j)$ some care needs to be taken when selecting representatives of
the endpoints for the computation of $\vec X^m_\rho$, which then naturally
yields $|\vec X^m_\rho|$ and $\vec\nu^m$.
We will present some numerical simulations for closed curves that
are not homotopic to a point in Section~\ref{sec:nr}.
\end{rem}

\setcounter{equation}{0}
\section{Numerical results} \label{sec:nr}

We used the finite element toolbox Alberta, \cite{Alberta},
to implement our schemes. The arising linear systems are solved with the
sparse factorisation package UMFPACK, see \cite{Davis04}. Solutions to the 
nonlinear equations for the scheme $(\BGNmc_{m,\star})^{h}$
are computed with a Newton iteration.
The two schemes $(\BGNmckappa_{m})^{h}$ and $(\BGNmc_{m,\star})^{h}$ for
curvature flow produce very similar results, and can be used interchangeably.
We include numerical results for both, in order to demonstrate that they work
well in practice. However, for evolutions where numerical stability is crucial,
we often prefer to employ the unconditionally stable scheme 
$(\BGNmc_{m,\star})^{h}$.

We note from \eqref{eq:Lgh} and \eqref{eq:Lg} 
that $L^h_g(\vec X^m)$ acts as a discrete energy for 
$(\BGNmckappa_{m})^{h}$ and $(\BGNmc_{m,\star})^{h}$, while
on recalling Theorem~\ref{thm:stabg} we define $\widetilde W_{g}^{m+1} =
\tfrac12\, ( (\kappa_g^{m+1})^2,|\vec X^m_\rho|_g )^\diamond$
as a discrete analogue of (\ref{eq:Wg})
for the scheme $(\BGNpwfwf_m)^\diamond$.
As the quadrature rule for the scheme $(\BGNpwfwf_m)^\diamond$ we either 
consider (\ref{eq:ip0}), leading to $(\BGNpwfwf_m)^h$, or a quadrature that 
is exact for polynomials of degree up to five, denoted by 
$(\cdot,\cdot)^\star$, and so leading to $(\BGNpwfwf_m)^\star$.
In order to avoid the non-uniqueness issue in Lemma~\ref{lem:exg},
we always use the latter in the case of open curves.

The initial data for the scheme $(\BGNpwfwf_m)^\diamond$,
given $\Gamma^0 = \vec X^0(\overline I)$, is defined as follows.
First we define $\kappa^0 \in V^h$ via
$\kappa^0(q_j) = [|\vec\omega^0|^{-2}\,\vec\kappa^0\,.\,\vec\omega^0](q_j)$
for $j=0,\ldots,J$, 
where $\vec\kappa^0\in \Vh$ is such that
\begin{equation*} 
\left( \vec\kappa^{0},\vec\eta\, |\vec X^0_\rho| \right)^h
+ \left( \vec{X}^{0}_\rho , \vec\eta_\rho\,|\vec X^0_\rho|^{-1} \right)
 = 0 \quad \forall\ \vec\eta \in \Vh\,.
\end{equation*}
Then let $\kappa_g^0 \in W^h_{\partial_0}$ with
$\kappa_g^0(q_j) = \mathcal{K}(\kappa^0, \vec\omega^0, \vec X^0)(q_j)$ 
for $q_j \in \overline I \setminus \partial_0 I$, recall \eqref{eq:mathcalK}. 
In addition, let $\vec Y_g^0 \in [W^h_{\partial_0}]^2$ with
$\vec Y_g^0 = [g^{-\frac12}(\vec X^0)\,|\vec\omega^0|^{-2}\,
\kappa_g^0\,\vec\omega^0](q_j)$ for 
$q_j \in \overline I \setminus \partial_0 I$.

In most of the presented simulations
we use uniform time steps, $\ttau_m = \ttau$, $m=0,\ldots,M-1$.
For some simulations, however, we use an adaptive time step strategy 
satisfying $\ttau_{\min} \leq \ttau_m \leq \ttau_{\max}$, $m=0,\ldots,M-1$,
with smaller time steps at the beginning of the evolution.
Unless otherwise stated, in all the simulations we use the
discretisation parameters $J=256$ and uniform time steps $\ttau = 10^{-4}$.

\subsection{The metric \eqref{eq:gmu}}

For the scheme $(\BGNmckappa_m)^h$
we show the evolution of two cigar shapes in Figure~\ref{fig:cfmu_cigar}
for the metric \eqref{eq:gmu} with $\mu=1$.
We note that in both cases the curve shrinks to a point.
\begin{figure}
\centering
\includegraphics[angle=-90,width=0.6\textwidth]{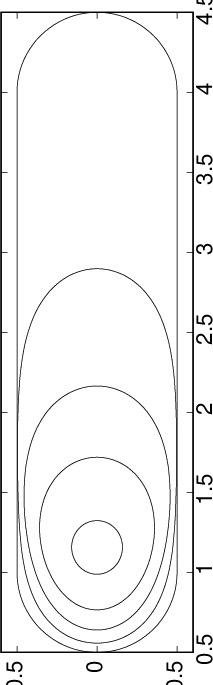} \quad
\includegraphics[angle=-90,width=0.2\textwidth]{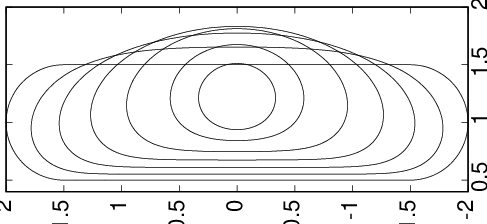}
\caption{$(\BGNmckappa_m)^h$
Curvature flow towards extinction for \eqref{eq:gmu} with $\mu=1$.
Solution at times $t=0,0.05,\ldots,0.2$ (left), and at times
$t=0,0.1,\ldots,0.5,0.55$ (right).} 
\label{fig:cfmu_cigar}
\end{figure}%
Repeating the same evolutions for  the metric
\eqref{eq:gmu} with $\mu=-1$,
now using the scheme $(\BGNmc_{m,\star})^{h}$, leads to the results
shown in Figure~\ref{fig:cfmu-1_cigar}. While the horizontally aligned curve
again shrink to a point, the vertically aligned curve approaches the
$x_2$--axis in order to minimise its geodesic length. The degeneracy of
$g$ on the axis leads to a breakdown of the evolution. In practice this means
that the Newton iteration to find a solution for $(\BGNmc_{m,\star})^{h}$
no longer converges. Here we note that we used the smaller uniform time step 
size $\ttau=10^{-5}$ for this experiment.
\begin{figure}
\centering
\includegraphics[angle=-90,width=0.6\textwidth]{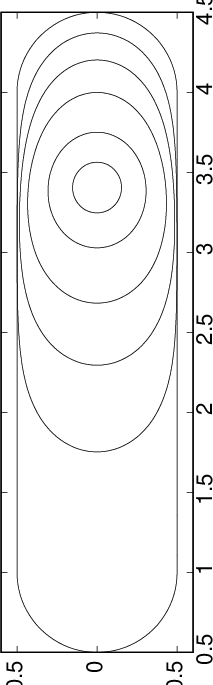}
\includegraphics[angle=-90,width=0.2\textwidth]{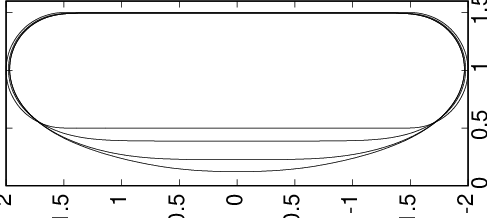}
\caption{$(\BGNmc_{m,\star})^{h}$
Curvature flow towards extinction for \eqref{eq:gmu} with $\mu=-1$.
Solution at times $t=0,1,\ldots,4,4.5$ (left) and at times
$t=0,0.01,0.015,0.0156$ (right).} 
\label{fig:cfmu-1_cigar}
\end{figure}%

We stress that the evolution is well defined, however, if we assign boundary
points to lie on the $x_2$--axis and to move freely on it. This is not 
dissimilar to the modelling of mean curvature flow for axisymmetric surfaces
of genus zero, see \cite{aximcf} for details. As an example,
we show the evolution of a semicircle with radius 1 and 
$\partial_0 I = \partial I$ in Figure~\ref{fig:cfmu-1open}.
As a comparison, we also show the same evolution for the case
$\partial_1 I = \partial I$. In both cases, the semicircle shrinks to
extinction, but the shape and time scale of the two evolutions differ.
\begin{figure}
\centering
\includegraphics[angle=-90,width=0.2\textwidth]{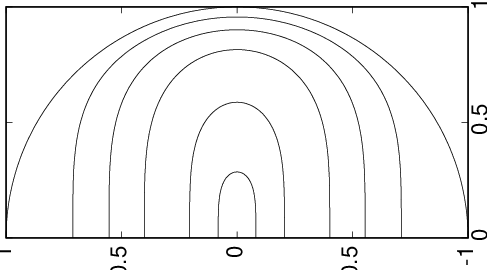}
\quad
\includegraphics[angle=-90,width=0.2\textwidth]{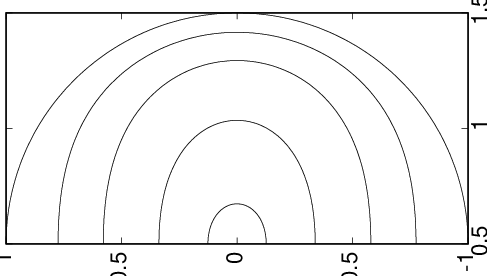}
\caption{$(\BGNmc_{m,\star})^{h}$
Curvature flow for \eqref{eq:gmu} with $\mu=-1$.
Solution for $\partial I = \partial_0 I = \{0,1\}$ at times 
$t=0,0.02,\ldots,0.08,0.085$ (left) and for 
$\partial I = \partial_1 I = \{0,1\}$ at times $t=0,0.1,\ldots,0.3,0.34$ 
(right).}
\label{fig:cfmu-1open}
\end{figure}

For completeness, we also show some evolutions for the cases
$\partial_D I = \partial I$ and $\partial_2 I = \partial I$ in
Figure~\ref{fig:cfmu-1_halfcircle001}. 
The first evolution for the Dirichlet, or no-slip, boundary conditions
leads to the curve trying to reach the $x_2$--axis in order to reduce its
length. Similarly to Figure~\ref{fig:cfmu-1_cigar} this leads to a breakdown of
the scheme. The second evolution for the Dirichlet conditions yields a straight
line segment as geodesic, while for the free-slip condition the initial
semicircle shrinks to a point on the $x_1$--axis.
\begin{figure}
\centering
\mbox{
\includegraphics[angle=-90,width=0.2\textwidth]{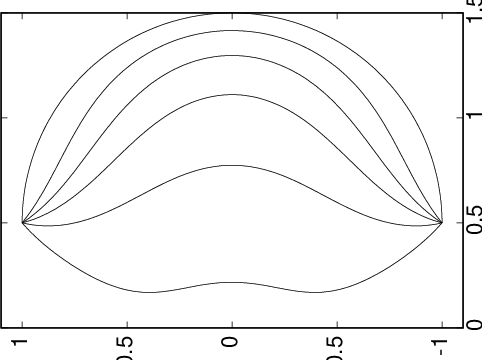}
\includegraphics[angle=-90,width=0.4\textwidth]{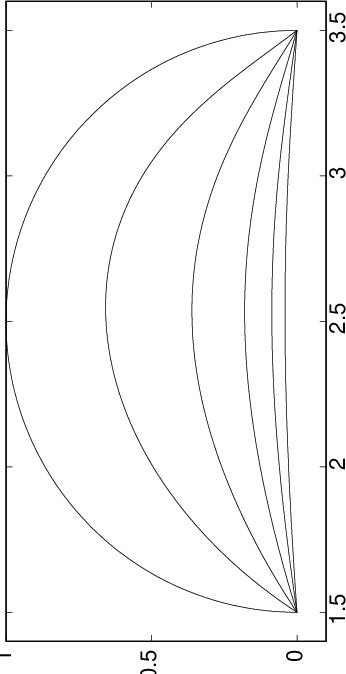}
\includegraphics[angle=-90,width=0.4\textwidth]{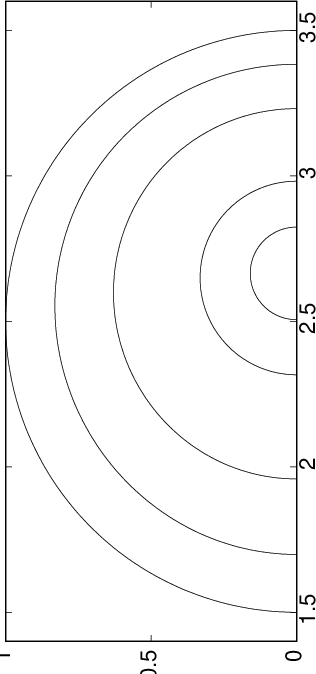}
}
\mbox{
\includegraphics[angle=-90,width=0.3\textwidth]{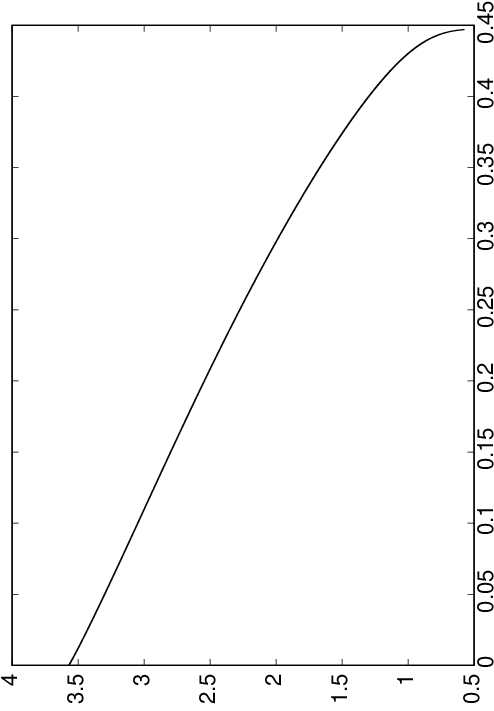}
\includegraphics[angle=-90,width=0.3\textwidth]{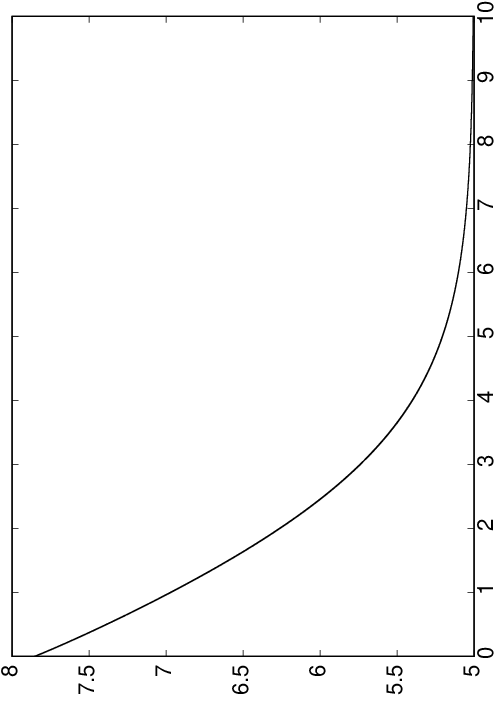}
\includegraphics[angle=-90,width=0.3\textwidth]{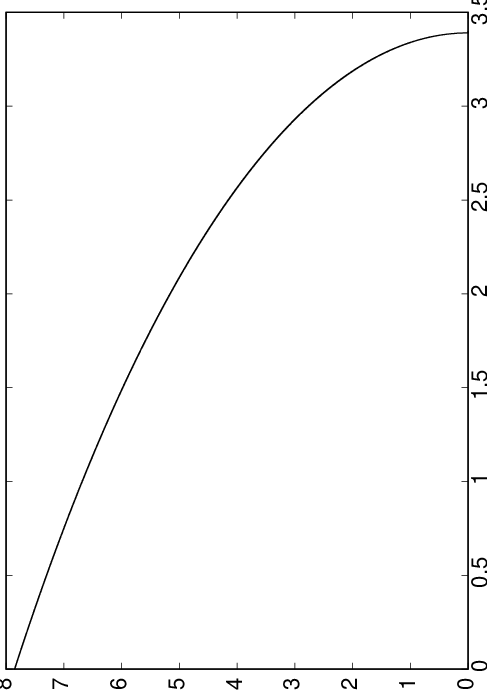}
}
\caption{$(\BGNmc_{m,\star})^{h}$
Curvature flow for \eqref{eq:gmu} with $\mu=-1$ with
$\partial I = \partial_D I = \{0,1\}$ (left and middle)
and $\partial I = \partial_2 I = \{0,1\}$ (right).
Solution at times $t=0,0.1,\ldots,0.4,0.447$ (left), 
$t=0,2,\ldots,10$ (middle) and $t=0,1,\ldots,3,3.3$ (right).
Below we show plots of the discrete energies $L_g^h(\vec X^m)$.
} 
\label{fig:cfmu-1_halfcircle001}
\end{figure}%

Evolutions for elastic flow with Navier and clamped boundary conditions,
respectively, are shown in Figure~\ref{fig:pwfwfmu-1_halfcircleDN}.
Here, for the clamped boundary conditions, recall \eqref{eq:veczeta}, 
we choose $\vec\zeta(p) = (\sin \vartheta(p), \cos \vartheta(p))^T$,
with $\vartheta(0) = 210^\circ$ and $\vartheta(1) = 150^\circ$. 
While in the Navier case the curve appears to grow
unboundedly, in the clamped case the curve seems to approach an optimal shape
aligned with the chosen metric.
\begin{figure}
\centering
\includegraphics[angle=-90,width=0.3\textwidth]{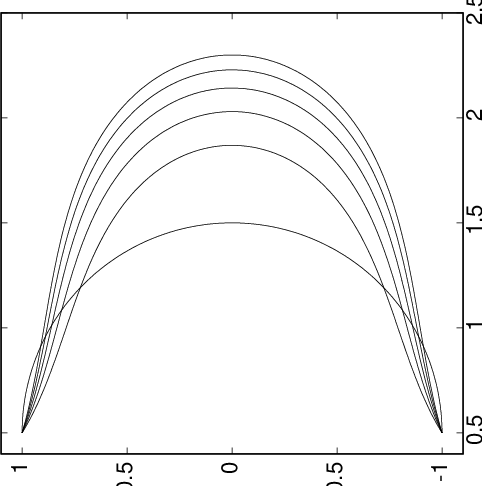}
\includegraphics[angle=-90,width=0.4\textwidth]{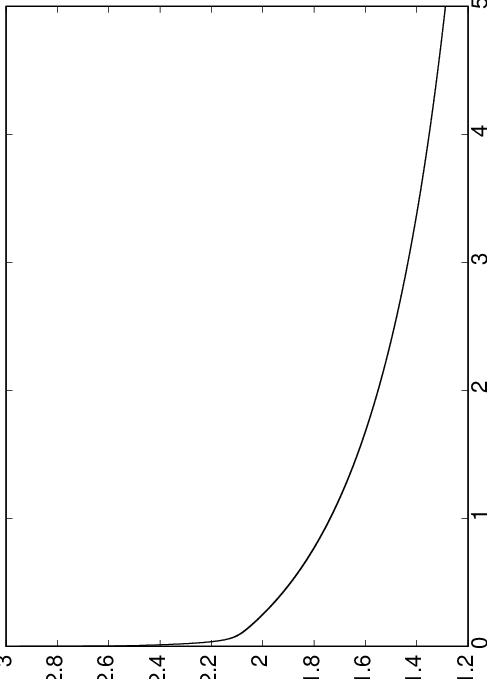}
\includegraphics[angle=-90,width=0.4\textwidth]{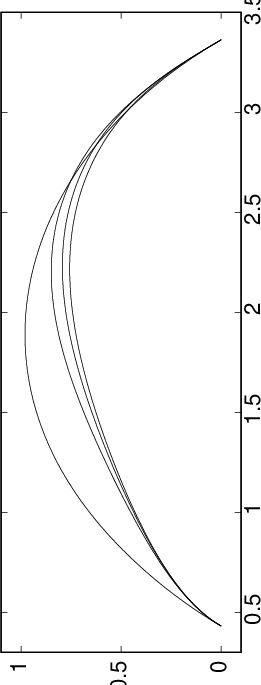}
\includegraphics[angle=-90,width=0.4\textwidth]{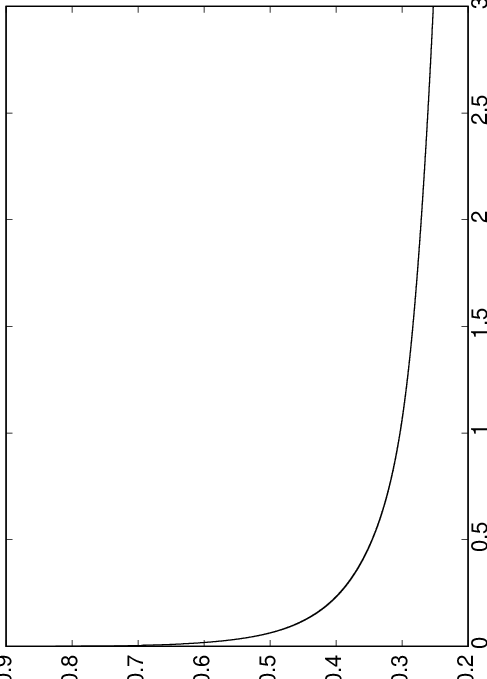}
\caption{$(\BGNpwfwf_m)^{\star}$
Elastic flow for \eqref{eq:gmu} with $\mu=-1$ and
$\partial I = \partial_N I = \{0,1\}$ (top) and
$\partial I = \partial_D I = \{0,1\}$ (bottom).
Solution at times $t=0,1,\ldots,5$ (above) and at times
$t=0,1,\ldots,5$ (below).
We also show plots of the discrete energy $\widetilde W_g^{m+1}$ over time.
} 
\label{fig:pwfwfmu-1_halfcircleDN}
\end{figure}%

We remind the reader that many more numerical simulations for closed curves
moving under curvature flow or elastic flow in the Riemannian manifold defined
by \eqref{eq:gmu}, including for the case case $\mu=1$ for the hyperbolic
plane, can be found in \cite{hypbol,hypbolpwf}.

\subsection{The torus metric \eqref{eq:gtorus}} 
A geodesic between two fixed points on the Clifford torus is computed in
Figure~\ref{fig:mctorusline}.
To this end, we employ the metric induced by
(\ref{eq:gtorus}) with $\mathfrak s = 1$, so that the torus has radii $r=1$ and
$R = \sqrt{2}$. We observe that the evolution eventually settles on a geodesic,
that is clearly not the shortest path connected the two points on the torus.
That is because of a topological restriction stemming from the fact that the
curve must stay within the equivalence class that is prescribed by the initial 
data.
\begin{figure}
\centering
\includegraphics[angle=-90,totalheight=3cm,align=t]{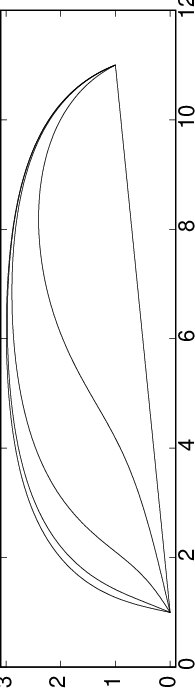}\quad
\includegraphics[angle=-0,totalheight=5cm,align=t]{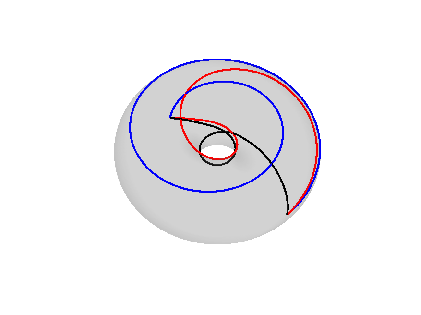}
\includegraphics[angle=-90,totalheight=4cm,align=t]{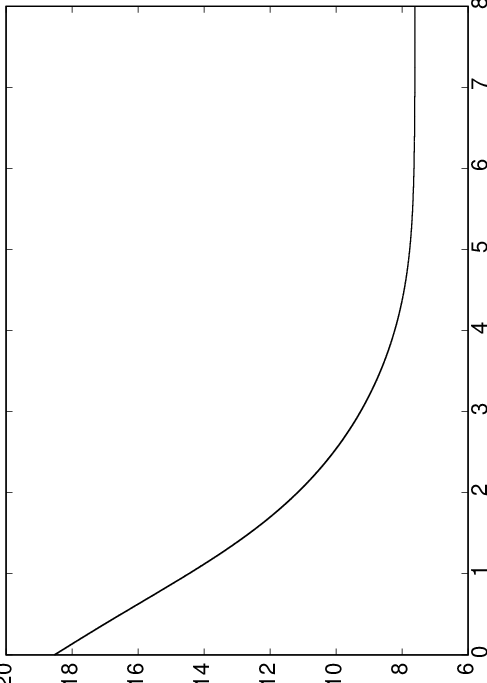}
\caption{$(\BGNmckappa_{m})^h$
Geodesic curvature flow on a Clifford torus, with $\partial_D I = \partial I =
\{0,1\}$.
The solutions $\vec X^m$ at times $t = 0, 2,\ldots, 6$. 
Below we visualise $\vec\Phi(\vec X^m)$ at times $t=0$ (blue), $t=2$ (red)
and $t=6$ (black), 
for (\ref{eq:gtorus}) with $\mathfrak s=1$, and also show a plot of the 
discrete energy $L_g^h(\vec X^m)$.
} 
\label{fig:mctorusline}
\end{figure}%

On recalling Remark~\ref{rem:homotopic}, we also present an evolution for
geodesic curvature flow of a closed curve that is not homotopic to a point. 
See Figure~\ref{fig:app:mctorus} for a presentation of the numerical
results for the scheme $(\BGNmc_{m,\star})^h$.
\begin{figure}
\centering
\mbox{
\includegraphics[angle=-0,totalheight=4cm,align=t]{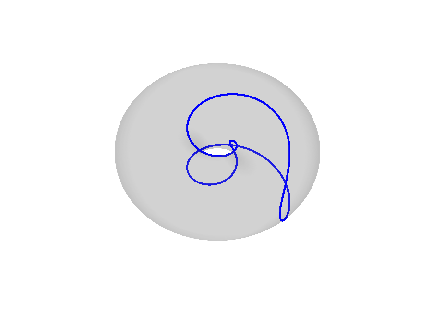}
\includegraphics[angle=-0,totalheight=4cm,align=t]{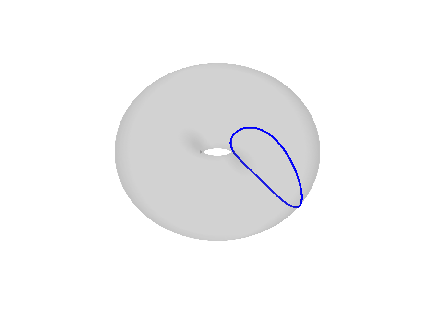}
\includegraphics[angle=-0,totalheight=4cm,align=t]{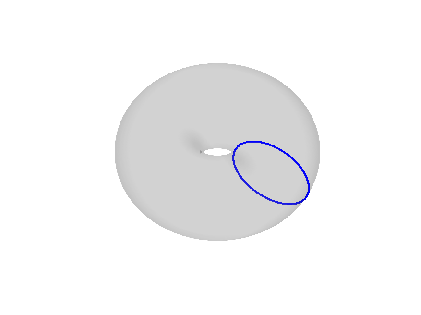}
}
\includegraphics[angle=-90,totalheight=4cm,align=t]{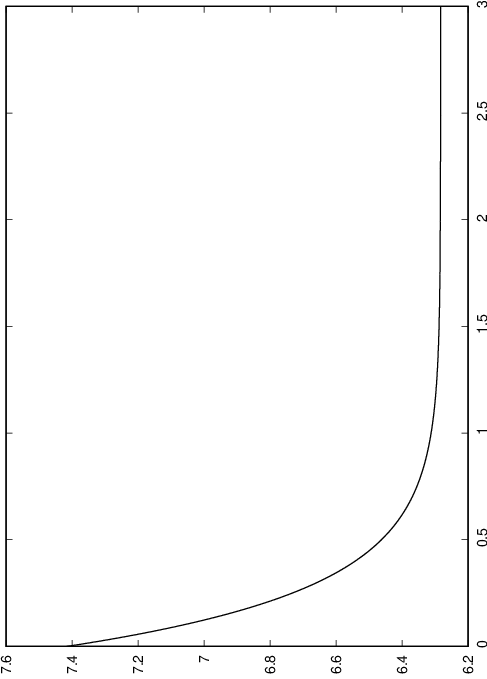}
\caption{
$(\BGNmc_{m,\star})^h$
Geodesic curvature flow on a Clifford torus.
We visualise $\vec\Phi(\vec X^m)$ at times $t=0,1,3$, 
for (\ref{eq:gtorus}) with $\mathfrak s=1$.
A plot of the discrete energy $L_g^h(\vec X^m)$ below.
} 
\label{fig:app:mctorus}
\end{figure}%

\subsection{The Angenent metric \eqref{eq:gAngenent}}

Unless otherwise stated, we choose $n=2$ in \eqref{eq:gAngenent}.
First we show the evolution under curvature flow of an eongated cigar shape 
that shrinks to a point, see in Figure~\ref{fig:cfang_cigar}.
\begin{figure}
\centering
\includegraphics[angle=-90,width=0.45\textwidth]{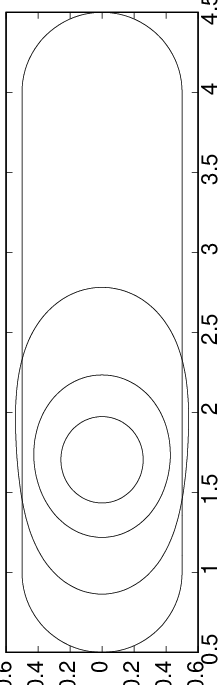}
\includegraphics[angle=-90,width=0.45\textwidth]{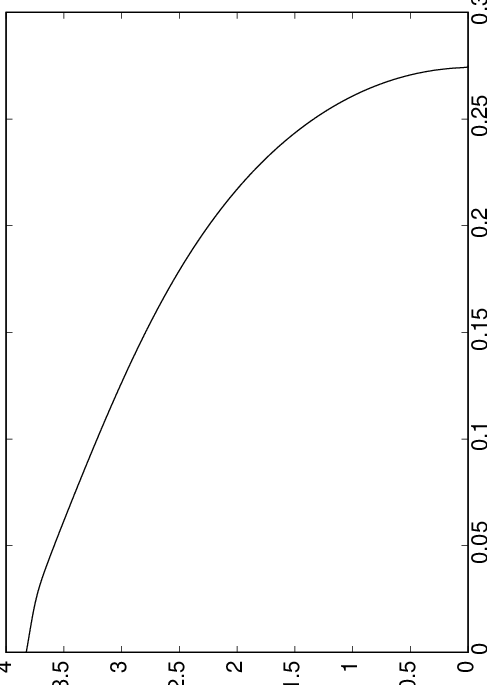}
\caption{$(\BGNmc_{m,\star})^{h}$
Curvature flow towards extinction for \eqref{eq:gAngenent}.
Solution at times $t=0,0.1,0.2,0.25$.
} 
\label{fig:cfang_cigar}
\end{figure}%

In a second experiment, we show the evolution under elastic flow 
of a circle towards the generating curve of the Angenent torus in an 
axisymmetric setting. We recall that the Angenent torus is a 
critical point of Huisken's $F$-functional \eqref{eq:HuiskenF}, 
and hence a self-shrinker for
mean curvature flow in $\bR^3$, with extinction time 1. As a consequence,
the generating curve of the Angenent torus, which from now on we will also 
simply call Angenent torus, is a critical point of the geodesic length $L_g$, 
and hence a geodesic. 
For the evolution shown in Figure~\ref{fig:gangenentwf},
we observe that the discrete curvature energy $W_g^{m+1}$ reduces from
about $3.5$ to about $10^{-5}$, giving a strong indication that
we have indeed found a geodesic. 
Note also that the final shape in Figure~\ref{fig:gangenentwf}
agrees well with the numerical results in 
\cite{Chopp94,Berchenko-Kogan19,schemeD}.
\begin{figure}
\center
\includegraphics[angle=-90,width=0.4\textwidth]{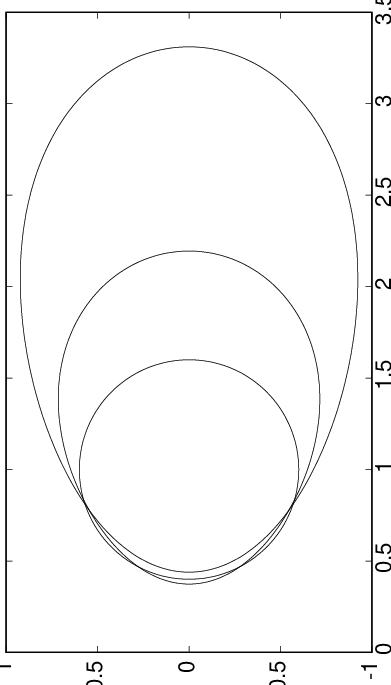}
\includegraphics[angle=-90,width=0.4\textwidth]{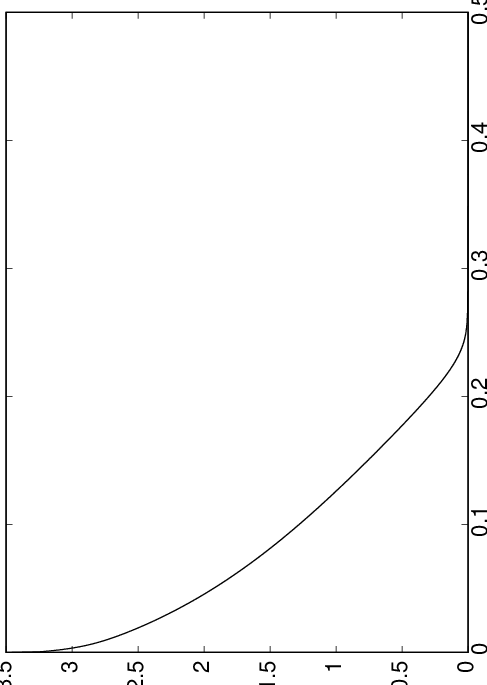}
\caption{$(\BGNpwfwf_{m})^\star$
Elastic flow for \eqref{eq:gAngenent} towards the Angenent torus.
Plots are at times $t=0,0.1,0.5$.
We also show a plot of the discrete energy $\widetilde W_g^{m+1}$ over time.
}
\label{fig:gangenentwf}
\end{figure}%
We have also performed simulations for elastic flow of initial curves
with a winding number larger than one, with respect to the 
point $2\,\vec\ek_1$, and they always settle as a stationary solution on a
multiple covering of the Angenent torus.

It is known that the Angenent torus is an unstable critical point
of the geodesic length $L_g$, see 
\cite{ColdingM12,Liu16,Berchenko-Kogan20preprint},
and this is confirmed by our numerical experiments.
Hence it is practically impossible to obtain an approximation to it
as a long-time limit of curvature flow. We demonstrate this phenomenon
by starting two simulations for the stable scheme $(\BGNmc_{m,\star})^h$ 
from slightly shifted Angenent tori.
Our numerical results in Figure~\ref{fig:gangenent} confirm that the 
stationary solution is unstable, and we see the
curve either moving monotonically
towards the $x_2$--axis, or towards infinity, with a 
significant decrease in the geodesic length of the curve in each case.
For these experiments we used the finer discretisation parameters
$J=2048$ and $\ttau=10^{-5}$.
\begin{figure}
\center
\includegraphics[angle=-90,width=0.4\textwidth]{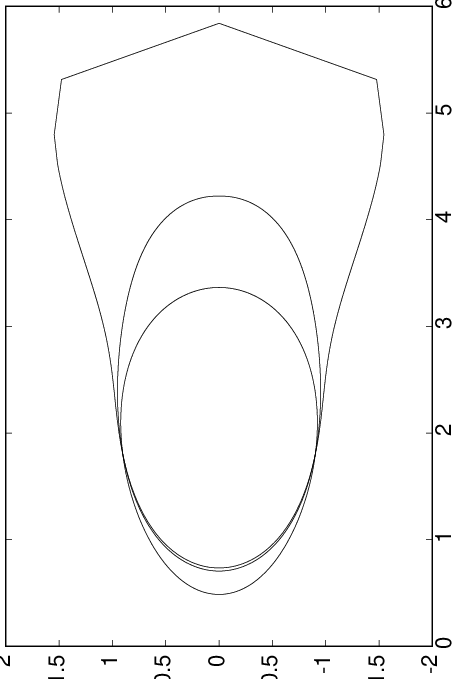}
\includegraphics[angle=-90,width=0.4\textwidth]{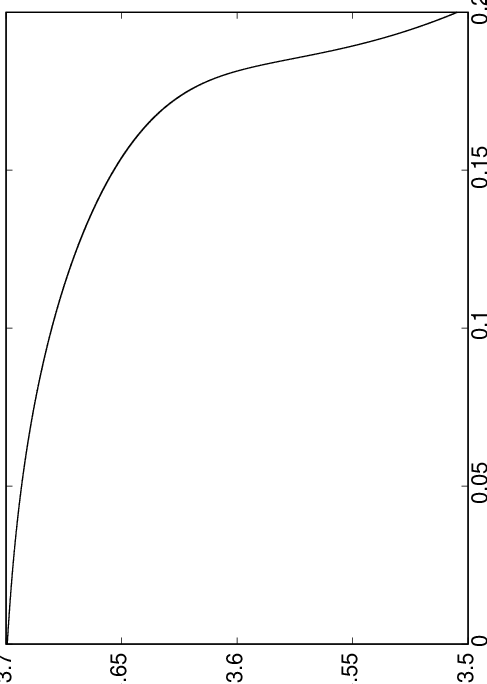}
\includegraphics[angle=-90,width=0.4\textwidth]{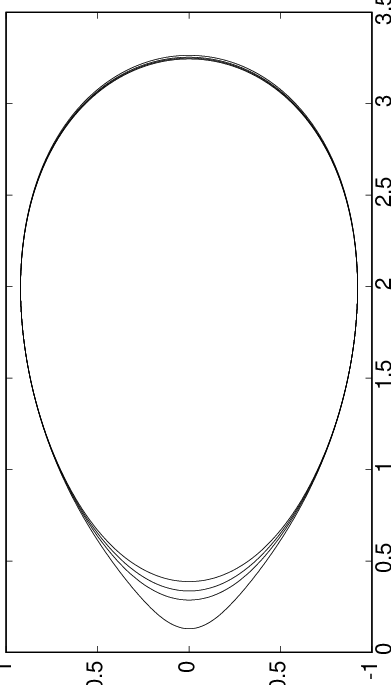}
\includegraphics[angle=-90,width=0.4\textwidth]{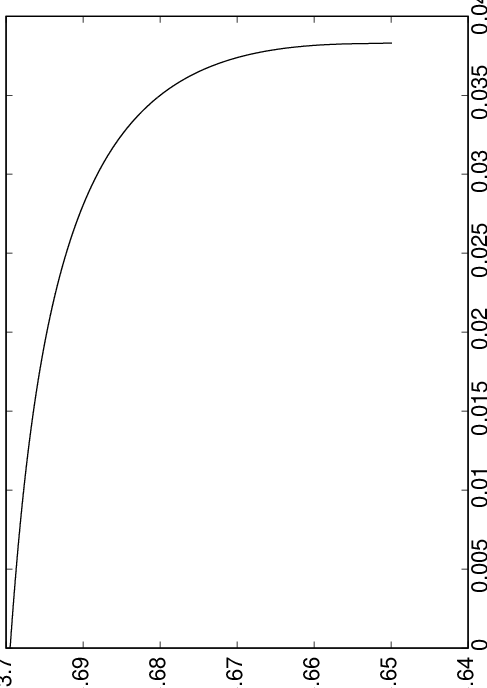}
\caption{$(\BGNmc_{m,\star})^h$
Curvature flow for \eqref{eq:gAngenent}, starting from horizontally shifted 
Angenent tori. Above shifted by $0.05$ to the right,
below shifted by $0.05$ to the left.
Plots are at times $t=0,0.18,0.2$ (above) and
at times $t=0,0.02,0.03,0.038$ (below).
We also show plots of the discrete energy $L_g^h(\vec X^m)$ over time.
}
\label{fig:gangenent}
\end{figure}%

We highlight the capabilities of our numerical method by computing
the ``Angenent tori'' in dimensions four and five, that is hypersurfaces
in $\bR^{n+1}$ that are topologically equivalent to $\bS^1 \times \bS^{n-1}$, 
$n=3,4$, and that are
self-shrinkers for mean curvature flow with extinction time 1. In particular,
in Figure~\ref{fig:gangenent34wf} we show the numerical steady states for
approximations of elastic flow for the metric \eqref{eq:gAngenent}, with
$n=2,3,4$. In each case the final discrete energy satisfies
$|\widetilde W_g^M| < 10^{-9}$, confirming that we are indeed approximating
geodesics.
\begin{figure}
\center
\includegraphics[angle=-90,width=0.3\textwidth]{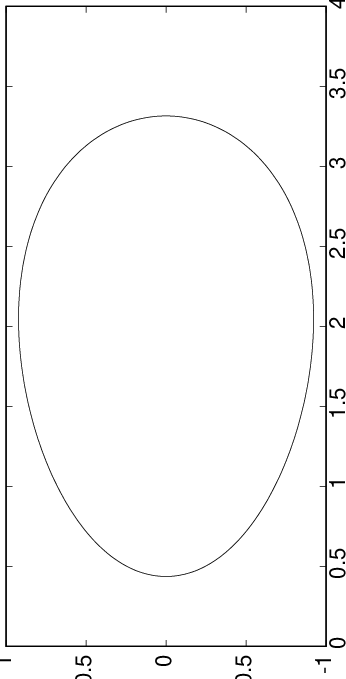}
\includegraphics[angle=-90,width=0.3\textwidth]{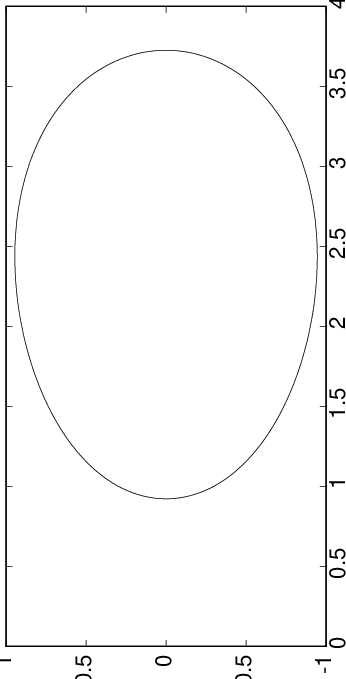}
\includegraphics[angle=-90,width=0.3\textwidth]{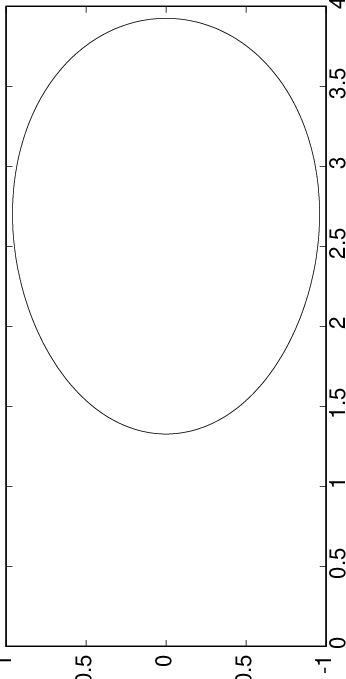}
\caption{$(\BGNpwfwf_{m})^\star$
Steady states for elastic flow for \eqref{eq:gAngenent} with $n=2,3,4$. 
Their discrete geodesic lengths are $3.70$, $6.39$ and $14.27$.
}
\label{fig:gangenent34wf}
\end{figure}%

We note that in the study of mean curvature flow self-shrinkers the value of
Huisken's $F$-functional itself is also a relevant quantity of interest, see
e.g.\ \cite{Stone94,ColdingM12,DruganN18,Berchenko-Kogan19}. For example, for
a self-shrinker its value of $F$ is equal to its entropy.
On recalling \eqref{eq:HuiskenF}, \eqref{eq:gAngenent} and
\eqref{eq:Lg}, we note that if
$\vec y(\overline I) \subset H$ is the generating curve 
for a rotationally symmetric hypersurface $\mathcal{S} \subset \bR^{n+1}$,
then
\begin{equation} \label{eq:entropy}
F(\mathcal{S}) = (4\,\pi)^{-\frac n2}\,\mathcal{H}^{n-1}(\bS^{n-1})
\left( (\vec y\,.\,\vec\ek_1)^{n-1}\,e^{-\frac14\,|\vec y|^2}, 
|\vec y_\rho| \right)
= 2^{1-n}\,[\bGamma(\tfrac{n}2)]^{-1}\,L_g(\vec y)\,.
\end{equation}
Hence in order to deduce the value of Huisken's $F$-functional for the
self-shrinkers that we have found computationally, it is sufficient to report
on the length of the corresponding geodesics, which we will do from now
on within the captions of the relevant figures. 
Here we note that the pre-factor
in \eqref{eq:entropy} for the cases $n=2,3,4$ is given by
$2^{1-n}\,[\bGamma(\tfrac{n}2)]^{-1} = \frac12$, $\frac1{2\,\sqrt{\pi}}$ 
and $\tfrac18$, respectively.
For the three geodesics displayed in Figure~\ref{fig:gangenent34wf}, our
computed values for the final discrete length $L_g^h(\vec X^M)$ are
$3.70$, $6.39$ and $14.27$, giving approximate values for the entropy of the 
associated surfaces of revolution of $1.85$, $1.80$ and $1.78$, respectively.
We remark that the value for $n=2$ agrees very well with the results
reported in \cite{Berchenko-Kogan19,schemeD}.

Denoting the entropy of the $n$-dimensional ``Angenent torus'' with 
$E_n$, we have so far established that $E_2 \approx 1.85$, $E_3 \approx 1.80$ 
and $E_4 \approx 1.78$. Continuing the above procedure for increasing
values of $n$, we numerically obtain that $E_6 \approx 1.77$,
$E_8 \approx 1.76$ and $E_{12} \approx 1.75$. This leads us to conjecture that
$E_n$ is a strictly monotonically decreasing sequence with 
$E_n \searrow \sqrt{3}$ as $n\to\infty$. 
Note that this conjecture is in the spirit of Lemma~A.4 in \cite{Stone94}. 

Of course, the most famous self-shrinker for mean curvature flow 
in $\bR^{n+1}$, with extinction time 1, is the sphere of radius $\sqrt{2\,n}$,
see e.g.\ \cite{ColdingM12}.
In the context of the metric \eqref{eq:gAngenent}, these correspond to
geodesics in the shape of semicircles with radius $\sqrt{2\,n}$. For $n=2$ and
$n=3$ we show an evolution each for elastic flow towards these geodesics, see
Figure~\ref{fig:pwfwfang_sphere23} for details,
where in each case as initial data we choose a semicircle of radius $n-1$.
For the two geodesics displayed in Figure~\ref{fig:pwfwfang_sphere23}, our
computed values for the final discrete length $L_g^h(\vec X^M)$ are
$2.94$ and $5.15$, giving approximate values for the entropy of the 
associated spheres of $1.47$ and $1.45$.
We remark that these values agree very well with the known values from
\cite[Examples~A.3]{Stone94}, which are known to converge to $\sqrt{2}$ as 
$n\to\infty$, recall \cite[Lemma~A.4]{Stone94}.
\begin{figure}
\centering
\mbox{
\includegraphics[angle=-90,width=0.2\textwidth]{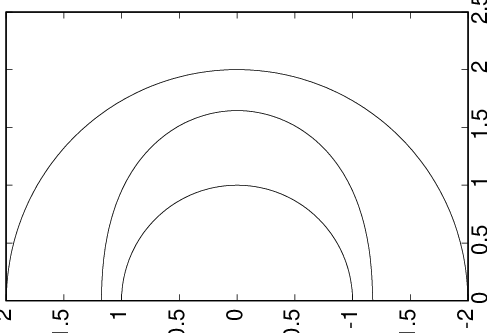}
\includegraphics[angle=-90,width=0.3\textwidth]{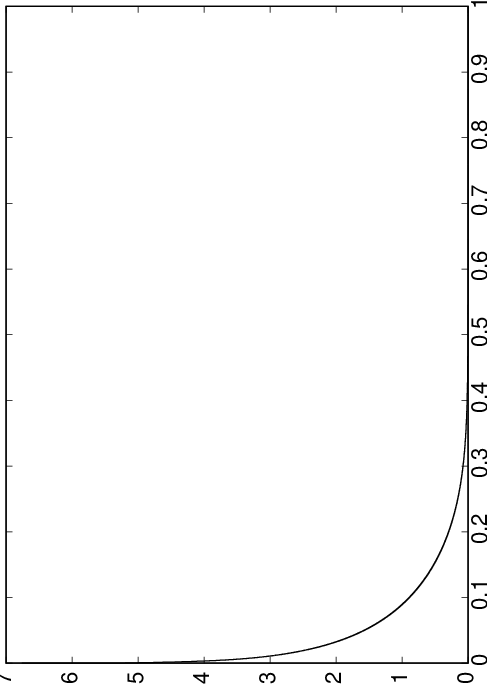}
\includegraphics[angle=-90,width=0.2\textwidth]{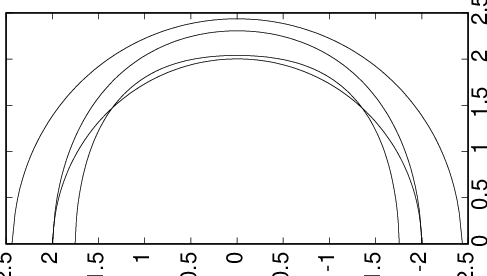}
\includegraphics[angle=-90,width=0.3\textwidth]{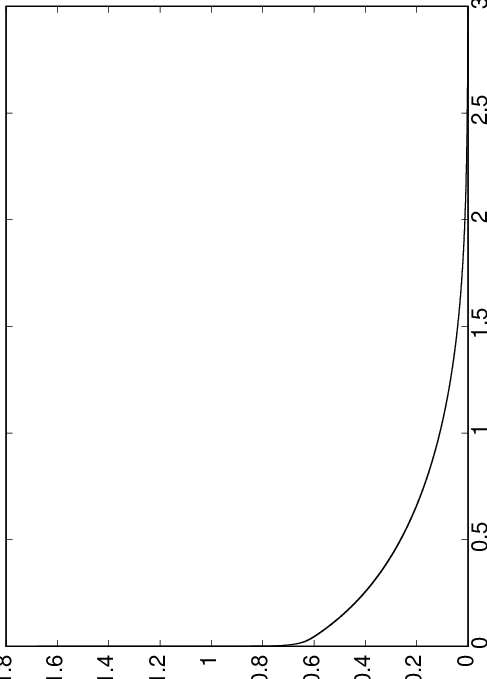}}
\caption{$(\BGNpwfwf_m)^{\star}$
Elastic flow for \eqref{eq:gAngenent}, $n=2$ (left) and $n=3$ (right), and
$\partial I = \partial_0 I = \{0,1\}$.
Solutions at times $t=0,0.1,1$ (left) and at times 
$t=0,0.1,1,3$ (right).
We also show a plot of the discrete energy $\widetilde W_g^{m+1}$ over time.
The discrete lengths of the obtained geodesics are 
$2.94$ ($n=2$) and $5.15$ ($n=3$).}
\label{fig:pwfwfang_sphere23}
\end{figure}%

In order to investigate the numerical accuracy of our proposed method, in
Table~\ref{tab:sphere2} we compare the numerical steady state solutions of 
elastic flow in the case $n=2$ with a semicircle of radius $2$,
as well as their respective induced entropy values. That is, we
list the errors $\mathcal{E}^M = \max_{j=0,\ldots,J} | 2 - |\vec X^M(q_j)| |$
and $e^M = |\frac12 L_g^h(\vec X^M) - \frac 4e|$
for increasing values of $J$. Here we fix $T=10$ and $\ttau=10^{-5}$, and
always start from the approximation of a unit semicircle.
The results presented in Table~\ref{tab:sphere2} appear to show a 
convergence rate of $1.5$ for the discrete $L^\infty$ error $\mathcal{E}^M$,
while the entropy error $e^M$ appears to converge quadratically.
\begin{table}[t!]
\center
\begin{tabular}{|r|c|c|c|c|}
\hline
$J$ & $\mathcal{E}^M$ & EOC & $e^M$ & EOC \\
\hline
32  & 1.9076e-02 & ---  & 1.4523e-03 & ---  \\
64  & 6.7016e-03 & 1.51 & 3.7588e-04 & 1.95 \\         
128 & 2.3596e-03 & 1.51 & 9.5579e-05 & 1.98 \\
256 & 8.3177e-04 & 1.50 & 2.4097e-05 & 1.99 \\
512 & 2.9555e-04 & 1.49 & 6.0496e-06 & 1.99 \\ 
\hline
\end{tabular}
\caption{Errors and experimental orders of convergence for the convergence 
test corresponding to the final shape of the evolution on the left of
Figure~\ref{fig:pwfwfang_sphere23}.}
\label{tab:sphere2}
\end{table}%

The final simulations in this subsection are devoted to finding self-shrinkers
for mean curvature flow that are non-embedded, inspired by the work
\cite{DruganK17}. We begin with an experiment for a closed curve with seven
self-intersections, see Figure~\ref{fig:DruganK17fig6}.
Under elastic flow the curve evolves towards the
generating curve of a non-embedded self-shrinker
for mean curvature flow. In fact, the steady state corresponds to the shape in
\cite[Figure~6]{DruganK17}.
Due to the large energy decrease at the beginning
of the evolution, we use an adaptive time stepping strategy with
$\ttau_{\min} = 10^{-7}$ and $\ttau_{\max} = 10^{-6}$. The spatial
discretisation uses $J=512$. The discrete energy of the final solution
satisfies $|\widetilde W^M_g| < 10^{-9}$, confirming that we are indeed
approximating a geodesic.
We note that the discrete geodesic length of the final curve is $10.53$,
giving an entropy of $5.26$.
\begin{figure}
\center
\includegraphics[angle=-90,width=0.3\textwidth]{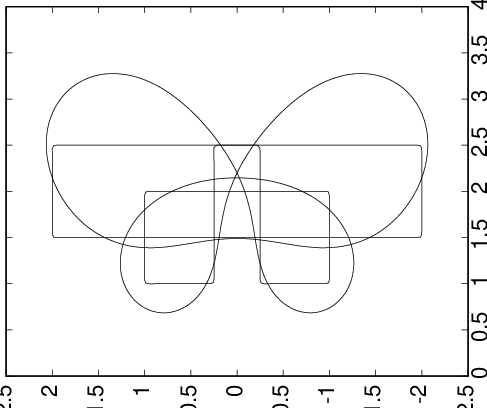}
\includegraphics[angle=-90,width=0.3\textwidth]{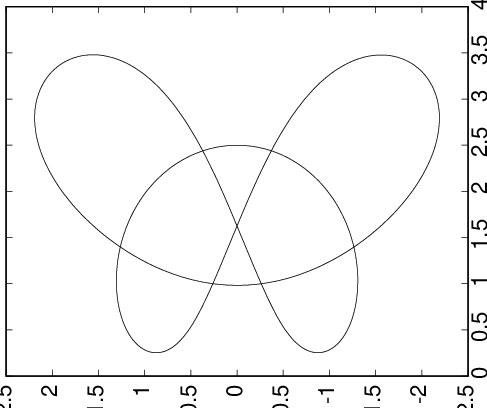}
\includegraphics[angle=-90,width=0.3\textwidth]{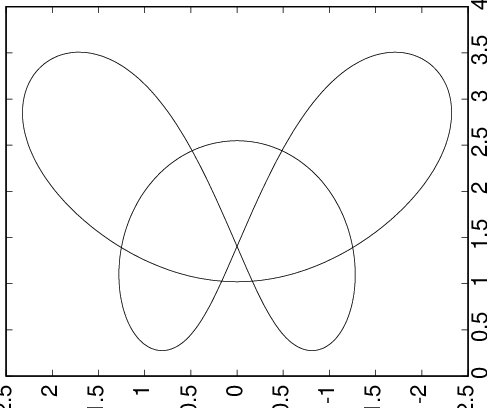}
\caption{$(\BGNpwfwf_{m})^\star$
Elastic flow for \eqref{eq:gAngenent}. 
Solution at times $t=0,0.01$, $t=1$ and $t=20$.
The discrete geodesic length of the final curve is $10.53$.
}
\label{fig:DruganK17fig6}
\end{figure}%

We also investigate, what happens to the geodesic from 
Figure~\ref{fig:DruganK17fig6} if we change the metric to \eqref{eq:gAngenent}
with $n=3$. See Figure~\ref{fig:DK17_n3} for a plot of the obtained 
numerical result, which compared to the geodesic for $n=2$ has shifted further
to the right. For this experiment we once again used an adaptive time stepping
strategy. We note that the discrete energy of the 
final solution satisfies $|\widetilde W^M_g| < 10^{-8}$.
The discrete geodesic length of the final curve is $18.24$,
giving an entropy of $5.15$.
\begin{figure}
\center
\includegraphics[angle=-90,width=0.3\textwidth]{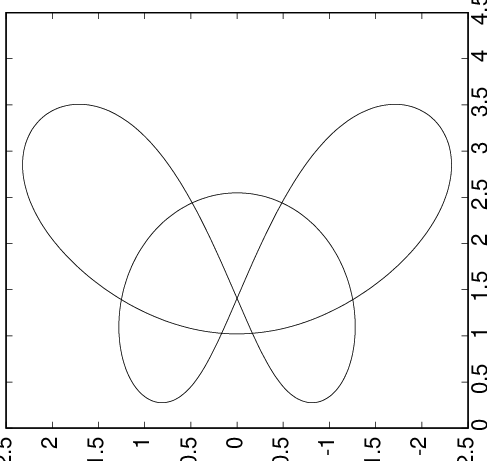}
\includegraphics[angle=-90,width=0.3\textwidth]{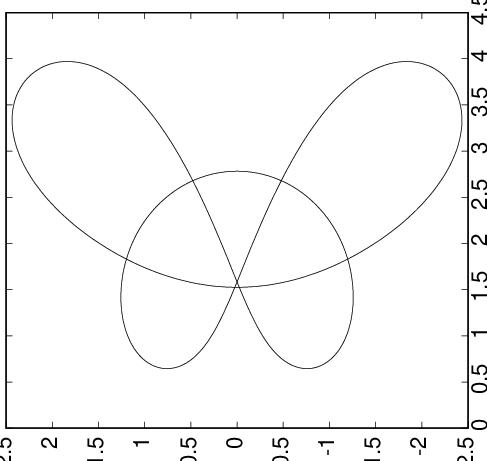}
\includegraphics[angle=-90,width=0.3\textwidth]{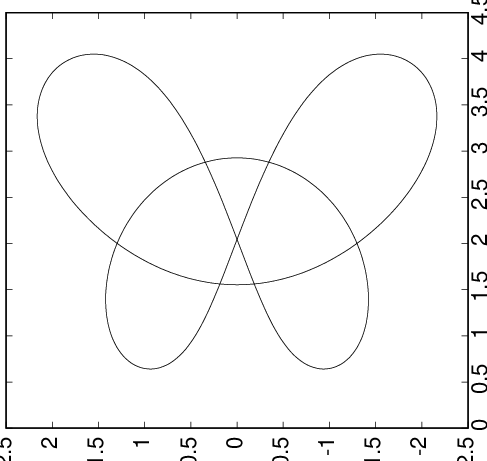}
\caption{$(\BGNpwfwf_{m})^\star$
Elastic flow for \eqref{eq:gAngenent} with $n=3$. 
Solution at times $t=0,1,100$.
The discrete geodesic length of the final curve is $18.24$.
}
\label{fig:DK17_n3}
\end{figure}%

Inspired by \cite[Figure~3]{DruganK17}, we now perform a numerical simulation
to find a non-embedded self-shrinker
of genus zero for mean curvature flow.
Starting from an initial curve with three self-intersections, we observe the
evolution for elastic flow shown in Figure~\ref{fig:DruganK17adapt},
where the final discrete energy satisfies $|\widetilde W^M_g| < 10^{-9}$.
We note the excellent agreement with \cite[Figure~3]{DruganK17}.
Here we again make use of an adaptive time stepping strategy with
$\ttau_{\min} = 10^{-7}$ and $\ttau_{\max} = 10^{-4}$. The spatial
discretisation uses $J=512$.
We note that the discrete geodesic length of the final curve is $7.33$,
giving an entropy of $3.66$.
\begin{figure}
\center
\includegraphics[angle=-90,width=0.3\textwidth]{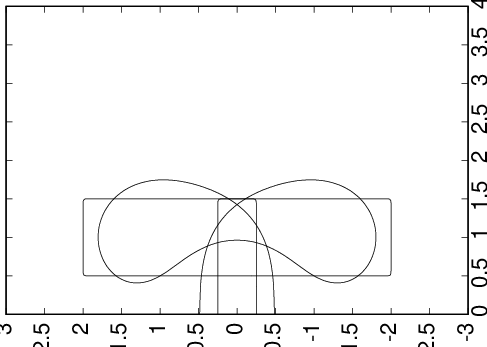}
\includegraphics[angle=-90,width=0.3\textwidth]{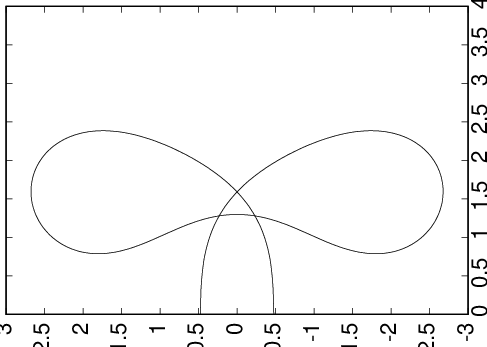}
\includegraphics[angle=-90,width=0.3\textwidth]{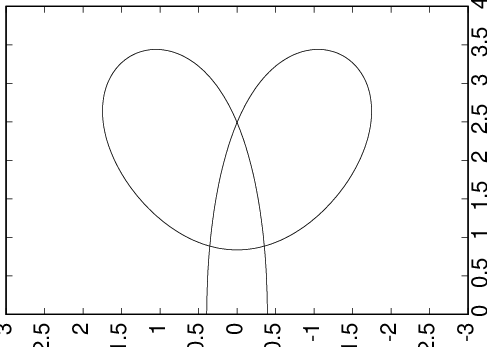}
\caption{$(\BGNpwfwf_{m})^\star$
Elastic flow for \eqref{eq:gAngenent} and
$\partial I = \partial_0 I = \{0,1\}$. 
Solution at times $t=0,0.01$, $t=0.1$ and $t=10$.
The discrete geodesic length of the final curve is $7.33$.
}
\label{fig:DruganK17adapt}
\end{figure}%

\subsection{The cone metric \eqref{eq:gcone}} 

In a first experiment for the metric \eqref{eq:gcone}, we look at (geodesic) 
curvature flow for a curve on a cone with $\mathfrak{b} = 0.5$, and so 
$\beta = 3^{-\frac12}$ in \eqref{eq:coneM}.
For the simulation in Figure~\ref{fig:mccone} it can be observed that 
in $H$ the initial circle of radius 2 deforms and shrinks to a point. 
On the hypersurface $\mathcal{M} = \vec\Phi(H)$,
the initial curve is homotopic to a point, and so shrinks to
a point away from the apex.
\begin{figure}[t!]
\centering
\includegraphics[angle=-90,width=0.3\textwidth,align=t]{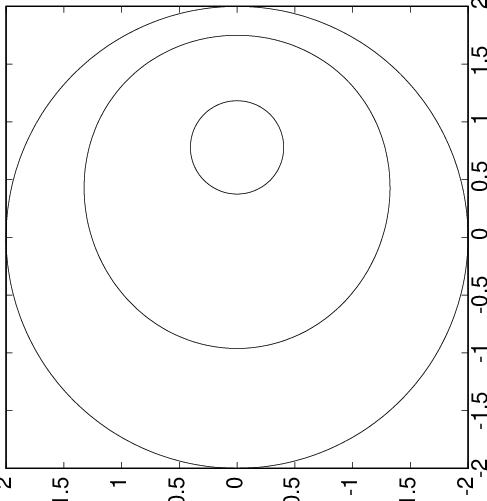}
\includegraphics[angle=-0,width=0.3\textwidth,align=t]{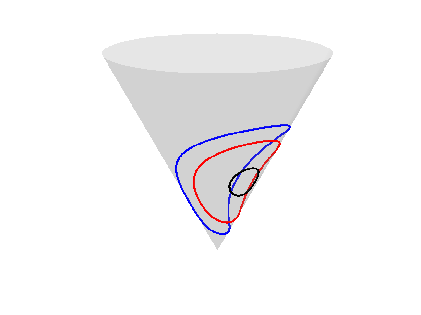}
\includegraphics[angle=-90,width=0.35\textwidth,align=t]{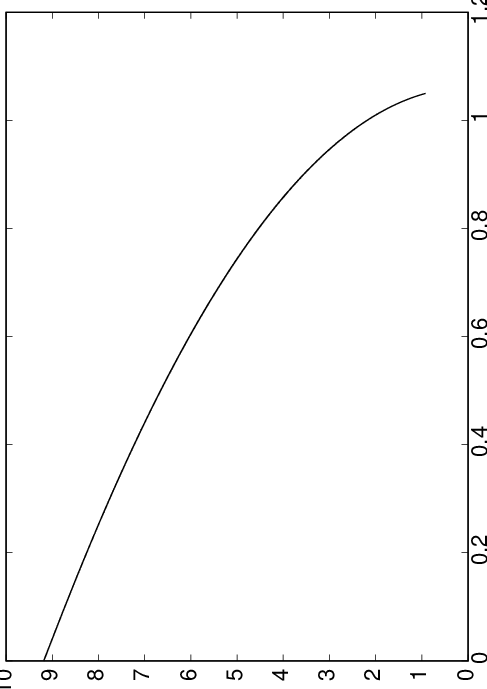}
\caption{
$(\BGNmckappa_m)^h$
Geodesic curvature flow on a cone.
The solutions $\vec X^m$
at times $t = 0, 0.5, 1$. On the right we visualise
$\vec\Phi(\vec X^m)$ at times $t=0, 0.5, 1$, for (\ref{eq:gcone}) with 
$\mathfrak b=0.5$. A plot of the discrete energy $L_g^h(\vec X^m)$ is shown on
the right.
} 
\label{fig:mccone}
\end{figure}%

The following conjecture on geodesic curvature flow on a cone was formulated by
Charles M. Elliott, \cite{Elliott09private}.

\begin{conjecture} \label{conj:cme}
A closed curve on a cone $\mathcal M$, that is not homotopic to a point on 
$\mathcal M$, will under geodesic curvature flow converge to the apex in 
finite time.
\end{conjecture}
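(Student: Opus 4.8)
The plan is to exploit the fact that the cone is intrinsically flat: Table~\ref{tab:S0} gives $S_0\equiv0$ for the metric \eqref{eq:gcone}, so $\mathcal M$ equipped with the induced metric is locally isometric to the Euclidean plane away from the apex, and geodesic curvature flow on $\mathcal M$ is nothing but Euclidean curve shortening flow on the developed surface. Concretely, I would first record that, writing $\vec z\cdot\vec\ek_1$ for the first coordinate and recalling $\beta^2=\mathfrak b^2/(1-\mathfrak b^2)$, one has $g^{\frac12}(\vec z)=\beta\,e^{\mathfrak b\,\vec z\cdot\vec\ek_1}$, so that the intrinsic distance of $\vec\Phi(\vec z)$ to the apex along a generator is $\rho=\tfrac{\beta}{\mathfrak b}\,e^{\mathfrak b\,\vec z\cdot\vec\ek_1}=\mathfrak b^{-1}g^{\frac12}(\vec z)$, while the parallel $\{\vec z\cdot\vec\ek_1=\text{const}\}$ has geodesic length $2\pi\beta\,e^{\mathfrak b\,\vec z\cdot\vec\ek_1}=2\pi\mathfrak b\,\rho$. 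Developing the cone therefore yields an infinite planar wedge of opening angle $\Theta=2\pi\mathfrak b<2\pi$ with its two bounding rays identified, under which the parallels become circular arcs of radius $\rho$ centred at the apex, of geodesic curvature $\varkappa_g=\rho^{-1}$.

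The key building block is that these parallels are self-shrinkers: a family of parallels with $\rho=\rho(t)$ solves $\mathcal V_g=\varkappa_g$, recall \eqref{eq:Vgkg}, precisely when $\dot\rho=-\rho^{-1}$, i.e.\ $\rho(t)=(\rho_0^2-2t)^{\frac12}$, reaching the apex at $t=\tfrac12\rho_0^2$. I would then invoke the avoidance (comparison) principle for curve shortening flow on a surface, namely that two initially disjoint curves stay disjoint. Given the initial non-contractible curve $\Gamma(0)$, set $\rho_{\max}=\max_{\Gamma(0)}\rho$ and take the parallel $C(0)$ at distance $\rho_{\max}+\epsilon$; since $\Gamma(0)$ lies strictly on the apex side of $C(0)$, the flow $\Gamma(t)$ remains strictly inside the shrinking parallel $C(t)$, giving the barrier $\max_{\Gamma(t)}\rho<\big((\rho_{\max}+\epsilon)^2-2t\big)^{\frac12}$. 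Hence the flow cannot remain smooth beyond $t=\tfrac12(\rho_{\max}+\epsilon)^2$, and along the way the whole curve is squeezed into an annulus collapsing onto the apex; in particular the extinction time is finite.

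It remains to argue that the resulting singularity is exactly the collapse onto the apex. On the flat locus $\{\rho>0\}$ embeddedness and the homotopy class of $\Gamma(t)$ are preserved as long as the flow is smooth and avoids the apex. Two features then conspire: by the Grayson-type theory for embedded curves on surfaces a singularity on the smooth part can only be collapse to an interior round point, and such a small loop is contractible, contradicting non-contractibility; and there are in fact \emph{no} non-contractible closed geodesics on the cone, since developing to the flat metric $d\rho^2+\rho^2\,d\psi^2$ (with $\psi$ of period $\Theta$) shows that a geodesic missing the apex is a Euclidean line subtending total polar angle exactly $\pi$, whose tangents cannot match across the holonomy rotation by $\Theta\neq0$, so it never closes up. Thus the infinite-time alternative of converging to a geodesic is vacuous, the infimum of $L_g$ over the non-contractible class is $0$ and is approached only by collapse onto the apex, and combined with the barrier $\max_{\Gamma(t)}\rho\to0$ one obtains convergence to the apex in finite time; this is also consistent with the gradient-flow structure \eqref{eq:gL2gradflow}, which forces $L_g$ to decrease.

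The hard part will be making this last step rigorous in the presence of the cone point and the non-compactness of $\mathcal M$. The Grayson/Gage--Hamilton machinery is classically stated on compact smooth surfaces, whereas here the region into which the curve collapses is a neighbourhood of the non-smooth apex that is not compactly contained in the smooth locus, so it cannot be applied off the shelf. I expect the main obstacle to be a local analysis near the apex: establishing a Huisken-type monotonicity formula adapted to the shrinking-parallel self-shrinker, proving that the blow-up at the first singular time is modelled on the shrinking circular parallel (a type-I, round collapse), and thereby upgrading the mere confinement $\max_{\Gamma(t)}\rho\to0$ to genuine convergence of $\Gamma(t)$ to the apex while definitively excluding neck-pinch or self-intersection singularities in the non-contractible class.
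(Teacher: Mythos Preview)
The statement is presented in the paper as a \emph{conjecture} (attributed to C.~M.~Elliott), not as a theorem with a proof. The paper offers only numerical support via the scheme $(\BGNmckappa_m)^h$ (Figure~\ref{fig:app:mccme}), observing that a non-contractible test curve first becomes circle-like and then descends uniformly toward the apex. There is therefore no proof in the paper to compare your attempt against.

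Your outline is nevertheless a reasonable line of attack, and the building blocks are correct: the cone is intrinsically flat ($S_0\equiv0$, Table~\ref{tab:S0}), the parallels are explicit self-shrinkers with $\rho(t)=(\rho_0^2-2t)^{1/2}$, and an outer parallel used as a barrier via the avoidance principle forces $\max_{\Gamma(t)}\rho\to0$ in finite time \emph{provided the flow remains smooth on the punctured cone up to that time}. That proviso is the crux, and you rightly flag it.

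The genuine gap is the one you name yourself. The Gage--Hamilton/Grayson machinery is formulated on smooth complete (or compact, or convex-at-infinity) surfaces, whereas the punctured cone is incomplete---the apex lies at finite intrinsic distance---so you cannot invoke it to exclude a premature singularity in which only part of the curve reaches the apex. Your barrier controls $\max_{\Gamma(t)}\rho$ but says nothing about $\min_{\Gamma(t)}\rho$, and the content of the conjecture is precisely that $\min\rho$ does not hit zero strictly before $\max\rho$ does. Turning your sketch into a proof would require a local analysis near the cone point (monotonicity/blow-up adapted to the conical self-shrinker, or a direct argument in the developed wedge with holonomy) ruling this out; as far as I am aware this is open, which is consistent with the paper stating it as a conjecture rather than a result.
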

The conjecture says, in particular, 
that the singularity of the flow will happen at the apex. 
Indeed we expect that all the points of the curve converge
to the apex at the singular time.
Moreover, we expect that a similar conjecture
holds on more general surfaces on which a curve encloses a singularity.

On recalling Remark~\ref{rem:homotopic}, we now numerically test the conjecture
by starting an evolution for geodesic curvature flow 
with a closed curve that is very close to the apex, but not uniformly so.
That is, we vary the $x_2$--coordinate
of the initial curve in $H$ between $\pm2$. During the evolution, 
the parts of the curve closest to the apex first start to rise, making the
curve becoming more circle-like, before the whole curve sinks towards to apex. 
See Figure~\ref{fig:app:mccme}, where we also show a plot of
the lowest point of the curve on the cone over time, highlighting the rise
and fall of the curve on the cone.
The observed behaviour is consistent with Conjecture~\ref{conj:cme}
\begin{figure}
\centering
\mbox{
\hspace{-2cm}
\includegraphics[angle=0,width=0.4\textwidth,align=t]{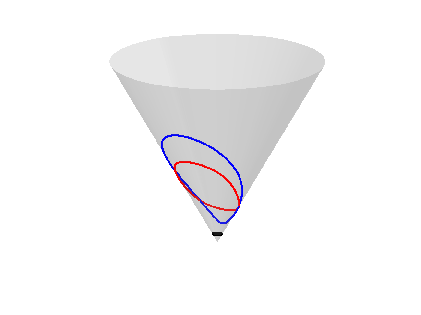}
\hspace{-1cm}
\includegraphics[angle=-90,width=0.35\textwidth,align=t]{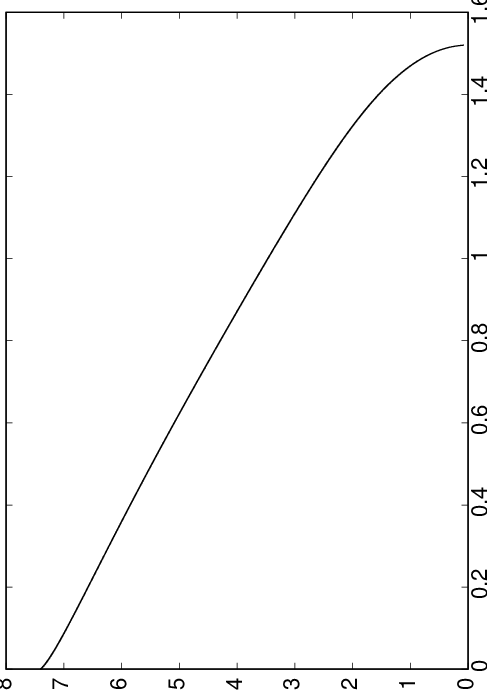}
\includegraphics[angle=-90,width=0.35\textwidth,align=t]{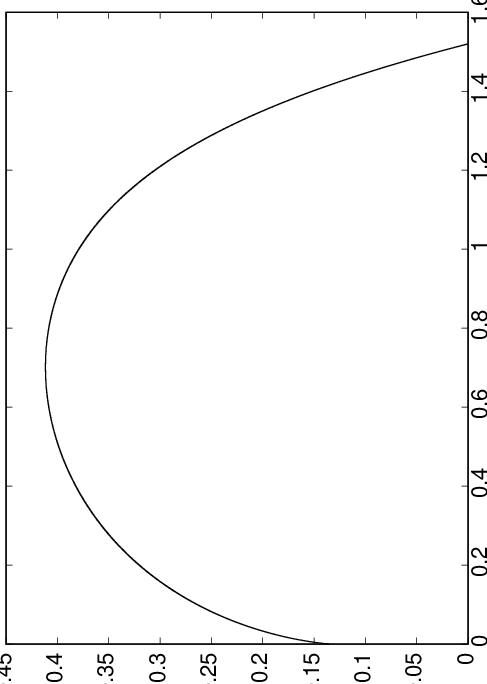}}
\caption{$(\BGNmckappa_m)^{h}$
Geodesic curvature flow on a cone.
We visualise $\vec\Phi(\vec X^m)$ at times $t=0, 0.5, 1.5$, 
for (\ref{eq:gcone}) with $\mathfrak b=0.5$. 
A plot of the discrete energy $L_g^h(\vec X^m)$ over the time interval 
$[0,1.52]$ in the middle.
On the right a plot of the lowest point of the curve on the cone, 
$\exp(\mathfrak b\,\min_{\overline I} \vec X^m\,.\,\vec\ek_1)$.
} 
\label{fig:app:mccme}
\end{figure}%

An experiment for (geodesic) elastic flow on the same cone is shown in
Figure~\ref{fig:app:elastcone}. Here the closed curve first approaches 
a circle, which then rises along the cone. 
By computing the energy one observes that a circle with increasing radius 
reduces the elastic energy.
\begin{figure}
\centering
\hspace{-2cm}
\includegraphics[angle=0,width=0.5\textwidth,align=t]{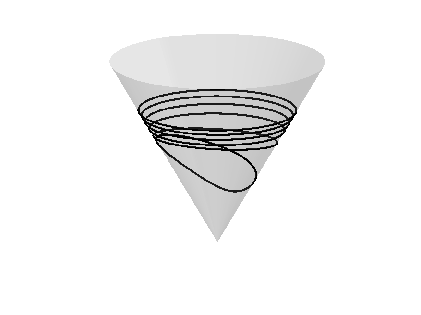}
\includegraphics[angle=-90,width=0.4\textwidth,align=t]{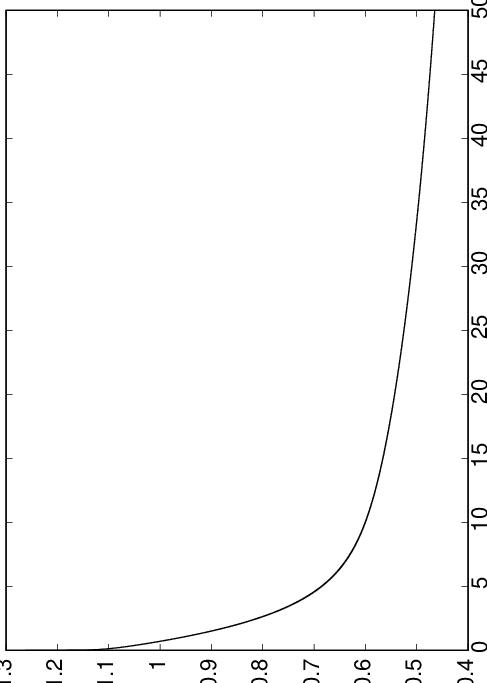}
\caption{$(\BGNpwfwf_{m,\star})^{h}$
Geodesic elastic flow on a cone.
We visualise
$\vec\Phi(\vec X^m)$ at times $t=0, 10, \ldots, 50$, for (\ref{eq:gcone}) with 
$\mathfrak b=0.5$. A plot of the discrete energy $\widetilde W_g^{m+1}$ on the 
right.
} 
\label{fig:app:elastcone}
\end{figure}%

\subsection{The metric \eqref{eq:gGNS}}

We end the section on the numerical results for our presented schemes
with some simulations for the metric \eqref{eq:gGNS} with
\eqref{eq:U} and \eqref{eq:Psi}. Recall that now geodesics in $H$ correspond 
to optimal interface profiles in multi-component phase field models. Of
particular interest are geodesics, or shortest paths, that connect
the vertices ${\bf e}_1$, ${\bf e}_2$, ${\bf e}_3$ of the Gibbs simplex G,
recall \eqref{eq:G}. To this end, we note that with the choice \eqref{eq:U}, 
it holds that the map $\vec z \mapsto f(\vec z) = {\bf u}_0 + U\,\vec z$ 
satisfies $f(0,0) = {\bf e}_1$,
$f(-2^\frac12,0) = {\bf e}_2$ and
$f(-2^\frac12,-(\frac32)^\frac12) = {\bf e}_3$.
For the first experiment we set 
$(\sigma_{12},\sigma_{13},\sigma_{23},\sigma_{123}) = (4,6,9,0)$,
and numerically compute a geodesic connecting ${\bf e}_1$ and ${\bf e}_2$
with the help of geodesic curvature flow. Here we always use the scheme
$(\BGNmckappa_{m})^{h}$ with the uniform time step size $\ttau = 10^{-5}$.
The results are shown in Figure~\ref{fig:gGNSline}, where we see that the flow
quickly settles on a curved geodesic.
We repeat the same simulation also for the paths
connecting the pure phases ${\bf e}_1$ and ${\bf e}_3$, as well as
${\bf e}_2$ and ${\bf e}_3$, and plot all three solutions within the Gibbs
simplex $G$, recall \eqref{eq:G}, at the bottom of Figure~\ref{fig:gGNSline}.
\begin{figure}
\centering
\includegraphics[angle=-90,width=0.4\textwidth,align=t]{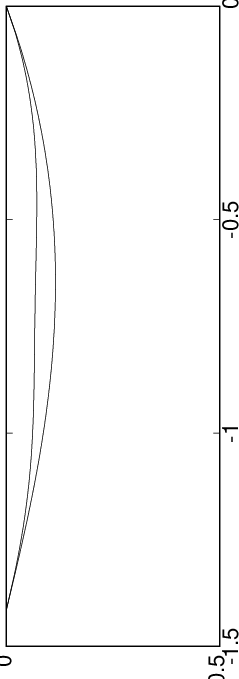}
\quad
\includegraphics[angle=-90,width=0.4\textwidth,align=t]{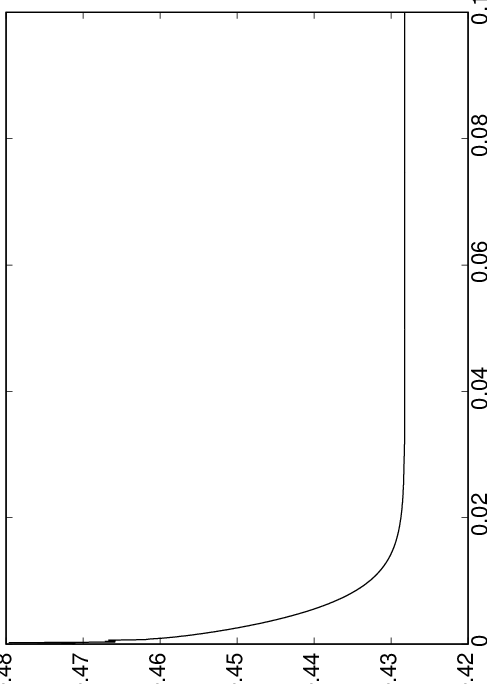}
\includegraphics[angle=-0,width=0.4\textwidth,align=t]{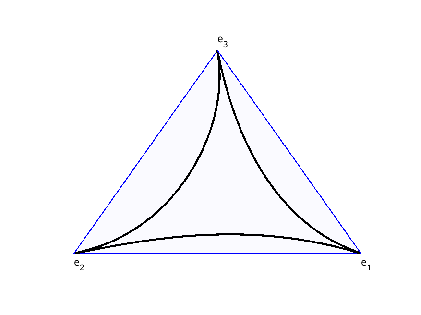}
\caption{$(\BGNmckappa_{m})^{h}$
Geodesic curvature flow for the metric \eqref{eq:gGNS} with \eqref{eq:Psi} 
and $(\sigma_{12},\sigma_{13},\sigma_{23},\sigma_{123}) = (4,6,9,0)$.
The solutions $\vec X^m$ at times $t = 0, 0.01, 0.1$. 
A plot of the discrete energy $L_g^h(\vec X^m)$ on the right.
Below a plot of the three minimisers connecting the vertices
of the Gibbs simplex.
} 
\label{fig:gGNSline}
\end{figure}%
In \cite{GarckeNS00} numerical computations indicated that on choosing 
$\sigma_{123} > 0$ in \eqref{eq:Psi} larger and larger, the minimising
profiles can be forced to approach the edges of the Gibbs simplex. 
To confirm this effect with our numerical method, we now choose 
$\sigma_{123} \in \{10,100,1000\}$ and plot the obtained geodesics in
Figure~\ref{fig:gGNS123}. It can be seen that for an increasing value of
$\sigma_{123}$, the geodesics are pushed further and further towards the edges
of the Gibbs simplex.
\begin{figure}
\centering
\mbox{
\includegraphics[angle=-0,width=0.33\textwidth,align=t]{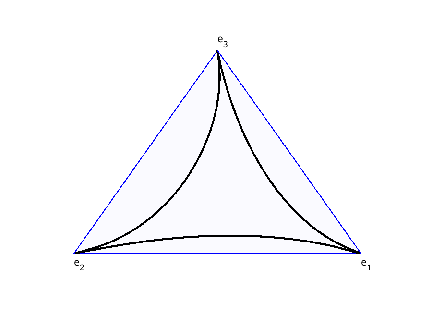}
\includegraphics[angle=-0,width=0.33\textwidth,align=t]{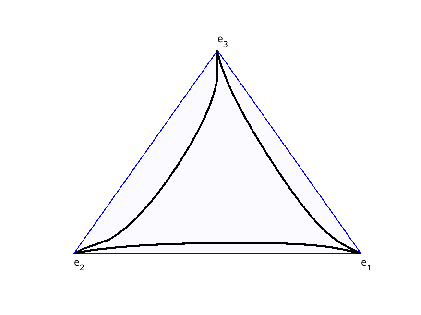}
\includegraphics[angle=-0,width=0.33\textwidth,align=t]{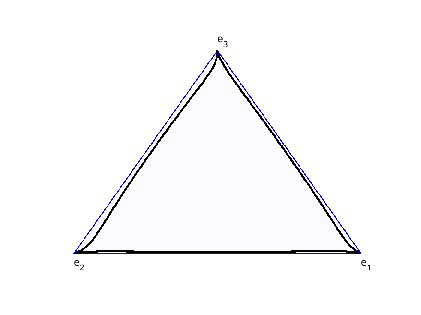}}
\caption{$(\BGNmckappa_{m})^{h}$
The minimisers obtained within the Gibbs simplex, 
for \eqref{eq:Psi} with $(\sigma_{12},\sigma_{13},\sigma_{23}) = (4,6,9)$
and $\sigma_{123} = 10,100,1000$ (from left to right).
} 
\label{fig:gGNS123}
\end{figure}%

We remark that in \cite{BretinM17} a novel approach for multi-component phase
field models has been considered, where the Ginzburg--Landau energy can be
defined such that the minimising paths connecting the pure phases are
always given by the edges of the Gibbs simplex. The metric that would
arise in the form of \eqref{eq:gGNS} in order to model this situation
is in general no longer conformal, and so would be outside the context of
this paper. However, following the approach in \cite{Haas07,GarckeH08}, we can
consider the following replacement of \eqref{eq:Psi} to achieve the same
effect:
\begin{equation} \label{eq:Psi2}
\Psi(u_1, u_2, u_3) = 
\sigma_{12}\,u_1^2\,u_2^2
+ \sigma_{13}\,u_1^2\,u_3^2 
+ \sigma_{23}\,u_2^2\,u_3^2 
+ \hat\sigma_{123}\,u_1\,u_2\,u_3^2 
+ \hat\sigma_{231}\,u_2\,u_3\,u_1^2 
+ \hat\sigma_{312}\,u_3\,u_1\,u_2^2 
\,,
\end{equation}
where $\hat\sigma_{123},\hat\sigma_{231},\hat\sigma_{312} \in \bRgeq$.
We perform a computation for \eqref{eq:Psi2} with
$\sigma_{12}=\sigma_{13}=\sigma_{23}=\hat\sigma_{123}=\hat\sigma_{231}=
\hat\sigma_{312}=1$ and show the obtained results in Figure~\ref{fig:gGH}.
It can be seen that now the geodesics lie on the edges of the Gibbs simplex,
confirming the analysis in \cite{Haas07,GarckeH08}.
\begin{figure}
\centering
\includegraphics[angle=-0,width=0.33\textwidth,align=t]{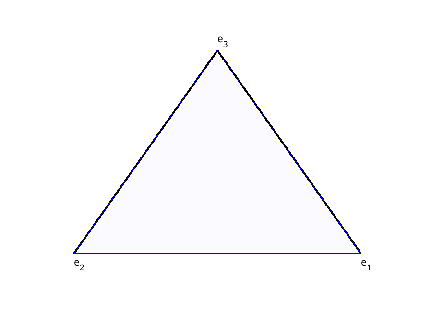}
\caption{$(\BGNmckappa_{m})^{h}$
The minimisers obtained for \eqref{eq:Psi2} with 
$\sigma_{12}=\sigma_{13}=\sigma_{23}=\hat\sigma_{123}=\hat\sigma_{231}=
\hat\sigma_{312}=1$ are precisely the edges of the Gibbs simplex.
} 
\label{fig:gGH}
\end{figure}%

\noindent
{\large\bf Acknowledgements}\\
The authors gratefully acknowledge the support 
of the Regensburger Universit\"atsstiftung Hans Vielberth.

\def\soft#1{\leavevmode\setbox0=\hbox{h}\dimen7=\ht0\advance \dimen7
  by-1ex\relax\if t#1\relax\rlap{\raise.6\dimen7
  \hbox{\kern.3ex\char'47}}#1\relax\else\if T#1\relax
  \rlap{\raise.5\dimen7\hbox{\kern1.3ex\char'47}}#1\relax \else\if
  d#1\relax\rlap{\raise.5\dimen7\hbox{\kern.9ex \char'47}}#1\relax\else\if
  D#1\relax\rlap{\raise.5\dimen7 \hbox{\kern1.4ex\char'47}}#1\relax\else\if
  l#1\relax \rlap{\raise.5\dimen7\hbox{\kern.4ex\char'47}}#1\relax \else\if
  L#1\relax\rlap{\raise.5\dimen7\hbox{\kern.7ex
  \char'47}}#1\relax\else\message{accent \string\soft \space #1 not
  defined!}#1\relax\fi\fi\fi\fi\fi\fi}


\begin{thebibliography}{10}

\bibitem{AndrewsC17}
{\sc B.~Andrews and X.~Chen}, {\em Curvature flow in hyperbolic spaces}, J.
  Reine Angew. Math., 729 (2017), pp.~29--49.

\bibitem{Angenent92}
{\sc S.~B. Angenent}, {\em Shrinking doughnuts}, in Nonlinear diffusion
  equations and their equilibrium states, 3 ({G}regynog, 1989), vol.~7 of
  Progr. Nonlinear Differential Equations Appl., Birkh\"{a}user Boston, Boston,
  MA, 1992, pp.~21--38.

\bibitem{schemeD}
{\sc J.~W. Barrett, K.~Deckelnick, and R.~N\"urnberg}, {\em A finite element
  error analysis for axisymmetric mean curvature flow}, IMA J. Numer. Anal.,
  (2020).
\newblock (to appear).

\bibitem{triplejMC}
{\sc J.~W. Barrett, H.~Garcke, and R.~N\"urnberg}, {\em On the variational
  approximation of combined second and fourth order geometric evolution
  equations}, SIAM J. Sci. Comput., 29 (2007), pp.~1006--1041.

\bibitem{triplej}
\leavevmode\vrule height 2pt depth -1.6pt width 23pt, {\em A parametric finite
  element method for fourth order geometric evolution equations}, J. Comput.
  Phys., 222 (2007), pp.~441--462.

\bibitem{curves3d}
\leavevmode\vrule height 2pt depth -1.6pt width 23pt, {\em Numerical
  approximation of gradient flows for closed curves in {${\mathbb R}^d$}}, IMA
  J. Numer. Anal., 30 (2010), pp.~4--60.

\bibitem{pwftj}
\leavevmode\vrule height 2pt depth -1.6pt width 23pt, {\em Elastic flow with
  junctions: Variational approximation and applications to nonlinear splines},
  Math. Models Methods Appl. Sci., 22 (2012), p.~1250037.

\bibitem{pwf}
\leavevmode\vrule height 2pt depth -1.6pt width 23pt, {\em Parametric
  approximation of isotropic and anisotropic elastic flow for closed and open
  curves}, Numer. Math., 120 (2012), pp.~489--542.

\bibitem{axisd}
\leavevmode\vrule height 2pt depth -1.6pt width 23pt, {\em Finite element
  methods for fourth order axisymmetric geometric evolution equations}, J.
  Comput. Phys., 376 (2019), pp.~733--766.

\bibitem{hypbolpwf}
\leavevmode\vrule height 2pt depth -1.6pt width 23pt, {\em Stable
  discretizations of elastic flow in {R}iemannian manifolds}, SIAM J. Numer.
  Anal., 57 (2019), pp.~1987--2018.

\bibitem{aximcf}
\leavevmode\vrule height 2pt depth -1.6pt width 23pt, {\em Variational
  discretization of axisymmetric curvature flows}, Numer. Math., 141 (2019),
  pp.~791--837.

\bibitem{hypbol}
\leavevmode\vrule height 2pt depth -1.6pt width 23pt, {\em Numerical
  approximation of curve evolutions in {R}iemannian manifolds}, IMA J. Numer.
  Anal., 40 (2020), pp.~1601--1651.

\bibitem{bgnreview}
\leavevmode\vrule height 2pt depth -1.6pt width 23pt, {\em Parametric finite
  element approximations of curvature driven interface evolutions}, in Handb.
  Numer. Anal., A.~Bonito and R.~H. Nochetto, eds., vol.~21, Elsevier,
  Amsterdam, 2020, pp.~275--423.

\bibitem{axipwf}
\leavevmode\vrule height 2pt depth -1.6pt width 23pt, {\em Stable
  approximations for axisymmetric {W}illmore flow for closed and open
  surfaces}, M2AN Math. Model. Numer. Anal., 55 (2021), pp.~833--885.

\bibitem{Bartels13a}
{\sc S.~Bartels}, {\em A simple scheme for the approximation of the elastic
  flow of inextensible curves}, IMA J. Numer. Anal., 33 (2013), pp.~1115--1125.

\bibitem{BenninghoffG16}
{\sc H.~Benninghoff and H.~Garcke}, {\em Segmentation and restoration of images
  on surfaces by parametric active contours with topology changes}, J. Math.
  Imaging Vision, 55 (2016), pp.~105--124.

\bibitem{Berchenko-Kogan19}
{\sc Y.~Berchenko-Kogan}, {\em The entropy of the {A}ngenent torus is
  approximately 1.85122}, Experiment. Math.,  (2019).
\newblock (to appear).

\bibitem{Berchenko-Kogan20preprint}
\leavevmode\vrule height 2pt depth -1.6pt width 23pt, {\em Numerically
  computing the index of mean curvature flow self-shrinkers}.
\newblock arXiv: 2007.06094, 2020.

\bibitem{BretinM17}
{\sc E.~Bretin and S.~Masnou}, {\em A new phase field model for inhomogeneous
  minimal partitions, and applications to droplets dynamics}, Interfaces Free
  Bound., 19 (2017), pp.~141--182.

\bibitem{Cabezas-RivasM07}
{\sc E.~Cabezas-Rivas and V.~Miquel}, {\em Volume preserving mean curvature
  flow in the hyperbolic space}, Indiana Univ. Math. J., 56 (2007),
  pp.~2061--2086.

\bibitem{ChengBMO02}
{\sc L.-T. Cheng, P.~Burchard, B.~Merriman, and S.~Osher}, {\em Motion of
  curves constrained on surfaces using a level-set approach}, J. Comput. Phys.,
  175 (2002), pp.~604--644.

\bibitem{Chopp94}
{\sc D.~L. Chopp}, {\em Computation of self-similar solutions for mean
  curvature flow}, Experiment. Math., 3 (1994), pp.~1--15.

\bibitem{ColdingM12}
{\sc T.~H. Colding and W.~P. {Minicozzi II}}, {\em Generic mean curvature flow
  {I}: generic singularities}, Ann. of Math. (2), 175 (2012), pp.~755--833.

\bibitem{DallAcquaLLPS18}
{\sc A.~Dall'Acqua, T.~Laux, C.-C. Lin, P.~Pozzi, and A.~Spener}, {\em The
  elastic flow of curves on the sphere}, Geom. Flows, 3 (2018), pp.~1--13.

\bibitem{DallAcquaLP14}
{\sc A.~Dall'Acqua, C.-C. Lin, and P.~Pozzi}, {\em Evolution of open elastic
  curves in {$\Bbb{R}^n$} subject to fixed length and natural boundary
  conditions}, Analysis (Berlin), 34 (2014), pp.~209--222.

\bibitem{DallAcquaLP17}
\leavevmode\vrule height 2pt depth -1.6pt width 23pt, {\em A gradient flow for
  open elastic curves with fixed length and clamped ends}, Ann. Sc. Norm.
  Super. Pisa Cl. Sci. (5), 17 (2017), pp.~1031--1066.

\bibitem{DallAcquaLP19}
\leavevmode\vrule height 2pt depth -1.6pt width 23pt, {\em Elastic flow of
  networks: long-time existence result}, Geom. Flows, 4 (2019), pp.~83--136.

\bibitem{DallAcquaP14}
{\sc A.~Dall'Acqua and P.~Pozzi}, {\em A {W}illmore-{H}elfrich {$L^2$}-flow of
  curves with natural boundary conditions}, Comm. Anal. Geom., 22 (2014),
  pp.~617--669.

\bibitem{DallAcquaS17preprint}
{\sc A.~Dall'Acqua and A.~Spener}, {\em The elastic flow of curves in the
  hyperbolic plane}.
\newblock arXiv:1710.09600, 2017.

\bibitem{DallAcquaS18}
\leavevmode\vrule height 2pt depth -1.6pt width 23pt, {\em Circular solutions
  to the elastic flow in hyperbolic space}, in Proceedings of Analysis on
  Shapes of Solutions to Partial Differential Equations, (2017), vol.~2082 of
  RIMS K\^oky\^uroku, Kyoto, Japan, 2018.

\bibitem{Davis04}
{\sc T.~A. Davis}, {\em Algorithm 832: {UMFPACK} {V}4.3---an
  unsymmetric-pattern multifrontal method}, ACM Trans. Math. Software, 30
  (2004), pp.~196--199.

\bibitem{DeckelnickD09}
{\sc K.~Deckelnick and G.~Dziuk}, {\em Error analysis for the elastic flow of
  parametrized curves}, Math. Comp., 78 (2009), pp.~645--671.

\bibitem{DeckelnickDE05}
{\sc K.~Deckelnick, G.~Dziuk, and C.~M. Elliott}, {\em Computation of geometric
  partial differential equations and mean curvature flow}, Acta Numer., 14
  (2005), pp.~139--232.

\bibitem{DruganK17}
{\sc G.~Drugan and S.~J. Kleene}, {\em Immersed self-shrinkers}, Trans. Amer.
  Math. Soc., 369 (2017), pp.~7213--7250.

\bibitem{DruganN18}
{\sc G.~Drugan and X.~H. Nguyen}, {\em Shrinking doughnuts via variational
  methods}, J. Geom. Anal., 28 (2018), pp.~3725--3746.

\bibitem{Elliott09private}
{\sc C.~M. Elliott}, {\em Private communication}, 2009.

\bibitem{EpsteinC87}
{\sc C.~L. Epstein and M.~Gage}, {\em The curve shortening flow}, in Wave
  motion: theory, modelling, and computation ({B}erkeley, {C}alif., 1986),
  vol.~7 of Math. Sci. Res. Inst. Publ., Springer, New York, 1987, pp.~15--59.

\bibitem{GarckeH08}
{\sc H.~Garcke and R.~Haas}, {\em Modeling of Nonisothermal Multi-Component,
  Multi-Phase Systems with Convection}, John Wiley \& Sons, Ltd, 2008, ch.~20,
  pp.~325--338.

\bibitem{GarckeMP19}
{\sc H.~Garcke, J.~Menzel, and A.~Pluda}, {\em Willmore flow of planar
  networks}, J. Differential Equations, 266 (2019), pp.~2019--2051.

\bibitem{GarckeMP20}
\leavevmode\vrule height 2pt depth -1.6pt width 23pt, {\em Long time existence
  of solutions to an elastic flow of networks}, Comm. Partial Differential
  Equations, 45 (2020), pp.~1253--1305.

\bibitem{GarckeNS00}
{\sc H.~Garcke, B.~Stoth, and B.~Nestler}, {\em A multiphase field concept:
  numerical simulations of moving phase boundaries and multiple junctions},
  SIAM J. Appl. Math., 60 (1999), pp.~295--315.

\bibitem{Haas07}
{\sc R.~Haas}, {\em Modeling and Analysis for general non-isothermal convective
  Phase Field Systems}, PhD thesis, University Regensburg, Regensburg, 2007.

\bibitem{Helmers13}
{\sc M.~Helmers}, {\em Kinks in two-phase lipid bilayer membranes}, Calc. Var.
  Partial Differential Equations, 48 (2013), pp.~211--242.

\bibitem{Helmers15}
\leavevmode\vrule height 2pt depth -1.6pt width 23pt, {\em Convergence of an
  approximation for rotationally symmetric two-phase lipid bilayer membranes},
  Q. J. Math., 66 (2015), pp.~143--170.

\bibitem{Huisken90}
{\sc G.~Huisken}, {\em Asymptotic behavior for singularities of the mean
  curvature flow}, J. Differential Geom., 31 (1990), pp.~285--299.

\bibitem{Jost05}
{\sc J.~Jost}, {\em Riemannian Geometry and Geometric Analysis},
  Springer-Verlag, Berlin, 2005.

\bibitem{KleeneM14}
{\sc S.~Kleene and N.~M. M{\o}ller}, {\em Self-shrinkers with a rotational
  symmetry}, Trans. Amer. Math. Soc., 366 (2014), pp.~3943--3963.

\bibitem{Koiso15}
{\sc N.~Koiso}, {\em On motion of an elastic wire in a {R}iemannian manifold
  and singular perturbation}, Osaka J. Math., 52 (2015), pp.~453--473.

\bibitem{KrausR13}
{\sc D.~Kraus and O.~Roth}, {\em Conformal metrics}, in Topics in Modern
  Function Theory, vol.~19 of Ramanujan Math. Soc. Lect. Notes Ser., Ramanujan
  Math. Soc., Mysore, India, 2013, pp.~41--83.
\newblock (see also \url{https://arxiv.org/abs/0805.2235}).

\bibitem{Kuhnel15}
{\sc W.~K\"uhnel}, {\em Differential Geometry: {C}urves -- {S}urfaces --
  {M}anifolds}, vol.~77 of Student Mathematical Library, Amer. Math. Soc.,
  Providence, RI, 2015.

\bibitem{LangerS84}
{\sc J.~Langer and D.~A. Singer}, {\em The total squared curvature of closed
  curves}, J. Differential Geom., 20 (1984), pp.~1--22.

\bibitem{LangerS87}
\leavevmode\vrule height 2pt depth -1.6pt width 23pt, {\em Curve-straightening
  in {R}iemannian manifolds}, Ann. Global Anal. Geom., 5 (1987), pp.~133--150.

\bibitem{Linner98a}
{\sc A.~Linn\'{e}r}, {\em Curve-straightening and the {P}alais-{S}male
  condition}, Trans. Amer. Math. Soc., 350 (1998), pp.~3743--3765.

\bibitem{Liu16}
{\sc Z.~H. Liu}, {\em The {M}orse index of mean curvature flow self-shrinkers},
  PhD thesis, Massachusetts Institute of Technology, Ann Arbor, MI, 2016.

\bibitem{MaitreS08}
{\sc E.~Maitre and F.~Santosa}, {\em Level set methods for optimization
  problems involving geometry and constraints. {II}. {O}ptimization over a
  fixed surface}, J. Comput. Phys., 227 (2008), pp.~9596--9611.

\bibitem{MikulaS06}
{\sc K.~Mikula and D.~{\v{S}}ev{\v{c}}ovi{\v{c}}}, {\em Evolution of curves on
  a surface driven by the geodesic curvature and external force}, Appl. Anal.,
  85 (2006), pp.~345--362.

\bibitem{MullerS20}
{\sc M.~M\"{u}ller and A.~Spener}, {\em On the {C}onvergence of the {E}lastic
  {F}low in the {H}yperbolic {P}lane}, Geom. Flows, 5 (2020), pp.~40--77.

\bibitem{Schippers07}
{\sc E.~Schippers}, {\em The calculus of conformal metrics}, Ann. Acad. Sci.
  Fenn. Math., 32 (2007), pp.~497--521.

\bibitem{Alberta}
{\sc A.~Schmidt and K.~G. Siebert}, {\em Design of Adaptive Finite Element
  Software: The Finite Element Toolbox {ALBERTA}}, vol.~42 of Lecture Notes in
  Computational Science and Engineering, Springer-Verlag, Berlin, 2005.

\bibitem{SpiraK07}
{\sc A.~Spira and R.~Kimmel}, {\em Geometric curve flows on parametric
  manifolds}, J. Comput. Phys., 223 (2007), pp.~235--249.

\bibitem{Stone94}
{\sc A.~Stone}, {\em A density function and the structure of singularities of
  the mean curvature flow}, Calc. Var. Partial Differential Equations, 2
  (1994), pp.~443--480.

\end{thebibliography}
\end{document}